\documentclass[12pt,svgnames]{article}
\usepackage {graphics} 
\usepackage {graphicx}  
\usepackage {pstricks,pst-node,  pst-coil} 
\setlength{\unitlength}{1mm}
\onecolumn  
\pagenumbering{arabic}  
\pagestyle{plain}  
\usepackage{amsmath,amssymb,latexsym}
\usepackage{amsmath,amsthm}
\sloppy  
\usepackage{xcolor}
\usepackage{subcaption}
\usepackage{tikz}

\pgfdeclarelayer{edgelayer}
\pgfdeclarelayer{nodelayer}
\pgfsetlayers{edgelayer,nodelayer,main}

\tikzstyle{black_dot}=[fill=black, draw=black, shape=circle, inner sep=0pt, minimum size=3pt]
\tikzstyle{white_circle}=[fill=white, draw=black, shape=circle, inner sep=0pt, minimum size=3pt]
\tikzstyle{empty}=[fill=none, draw=none, shape=rectangle, minimum size=3pt, inner sep=0pt]
\tikzstyle{normal_edge}=[very thick]
\tikzstyle{dotted_edge}=[-, dotted, very thick]
\tikzstyle{blue_edge}=[-, draw=blue, dashed, very thick]
\tikzstyle{red_edge}=[-, draw=red, very thick]
\tikzstyle{green_edge}=[-, draw=Green, dotted, very thick]
\tikzstyle{yellow_edge}=[-, draw=yellow, dash dot, very thick]
\tikzstyle{right_arrow}=[fill=none, ->, very thick]
\tikzstyle{dashed_edge}=[-, dashed, very thick]
\tikzstyle{red_dashed}=[-, dotted, draw=red, very thick]  
\tikzstyle{blue_dashed}=[-, dotted, draw=blue, very thick] 
  
\newtheorem{tw}{Theorem}  
\newtheorem{lem}[tw]{Lemma}

\newtheorem{cnj}[tw]{Conjecture}  
\newtheorem{cor}[tw]{Corollary}

\begin{document}
\hyphenation{every}
\title{On Local Irregularity Conjecture for 2-multigraphs}
\author{Igor Grzelec\thanks{Department of Discrete Mathematics, AGH University of Krakow, Poland} \thanks{The corresponding author. Email:  grzelec@agh.edu.pl}, Alfréd Onderko\thanks{Institute of mathematics, P.J. Šafárik University, Košice, Slovakia}, Mariusz Woźniak\footnotemark[1]}

\maketitle

\begin{abstract}
A multigraph in which adjacent vertices have different degrees is called \textit{locally irregular}.
The \textit{locally irregular edge coloring} is an edge coloring of a multigraph $G$ in which every color induces a locally irregular submultigraph of $G$. 
We denote by $\operatorname{lir}(G)$ the \textit{locally irregular chromatic index} of a multigraph $G$, which is the smallest number of colors required in a locally irregular edge coloring of $G$, given that such a coloring of $G$ exists.
By $^2G$ we denote a 2-multigraph obtained from a simple graph $G$ by doubling each its edge.
In 2022 Grzelec and Woźniak conjectured that $\operatorname{lir}(^2G) \leq 2$ for every connected simple graph $G$ different from $K_2$; the conjecture is known as Local Irregularity Conjecture for 2-multigraphs. 
In this paper, we prove this conjecture in the case of regular graphs, split graphs, and some particular families of subcubic graphs. 
Moreover, we provide a constant upper bound on the locally irregular chromatic index of planar 2-multigraphs (except for $^2K_2$), and we obtain a better constant upper bound on $\operatorname{lir}(^2G)$ if $G$ is a simple subcubic graph different from $K_2$.
In the proofs, special decompositions of graphs and the relation of Local Irregularity Conjecture to the well-known 1-2-3 Conjecture are utilized.\\

\textit{Keywords:} locally irregular coloring,  2-multigraph, regular graph, split graph, planar graph, subcubic graph.
\end{abstract}

\section{Introduction}
All graphs in this paper are finite and without loops.
In this paper, the primary focus is on the multigraphs with a multiplicity of each edge being precisely two.
However, due to the studied problem itself and the need for the use of submultigraphs of these specific multigraphs, we introduce most of the definitions in the most general way - for general multigraphs.
Formally, a multigraph $G$ consists of the set of vertices $V(G)$ - a nonempty finite set, and of the multiset of edges $E(G)$ - the multiset of 2-element subsets of $V(G)$.
The multiplicity of each edge $e$ in $G$, the number of occurrences of $e$ in $E(G)$, is denoted by $\mu_G(e)$.
Hence, if $\mu_G(e) = 0$ then $e$ is not an edge of $G$.
If $\mu_G(e) \geq 2$, all occurrences of $e$ in the multiset $E(G)$ are said to be \textit{parallel edges forming the multiedge} $e$.
If the multiplicity of each edge of $G$ is at most one then we say that $G$ is a simple graph, and if the multiplicity of each edge is zero or two then we say that $G$ is a 2-multigraph.
Hence a 2-multigraph may be constructed from a simple graph by replacing each edge with two parallel edges; 
we use $^2G$ to simply denote the 2-multigraph obtained from a simple graph $G$ by replacing each edge of $G$ with two parallel edges.
The degree of a vertex $v$ is the sum of multiplicities of edges incident to $v$.

A graph is \textit{locally irregular} if the neighboring vertices have different degrees. 
An edge coloring of a multigraph $G$ is an assignment of colors to elements of the multiset $E(G)$.
If each color of an edge coloring of $G$ induces a locally irregular submultigraph, the edge coloring is called \textit{locally irregular}.
Moreover, if no more than $k$ colors are used in a locally irregular coloring of a graph, we say that the coloring is locally irregular $k$-edge coloring.
The smallest number $k$ such that a graph $G$ has a locally irregular $k$-coloring is the locally irregular chromatic index of $G$, denoted by $\operatorname{lir}(G)$.

In most cases, we use colors red, blue, green, etc., so it is easy to illustrate the considered colorings in figures.
Moreover, in the case of multiedges with multiplicity two, the parallel edges forming a multiedge need not be colored with the same color. 
Hence we say that the multiedge $e$ is colored for example red-blue, if one of the parallel edges forming $e$ is colored red and the second one is blue. 
Similarly, if we say that the multiedge $e$ of a 2-multigraph is colored red-red then both parallel edges forming $e$ are red.

Locally irregular colorings were mostly studied in the case of simple graphs, however in~\cite{Grzelec Madaras Onderko Roman Sotak}, \cite{Grzelec wozniak}, and \cite{Grzelec wozniak2} locally irregular colorings of multigraphs were considered. 
Naturally, if $G$ is a simple graph with $\operatorname{lir}(G) \leq k$ then $\operatorname{lir}(^2G) \leq k$, as we may replace each edge of color $c$ in the coloring of $G$ with two parallel edges of color $c$ in $^2G$, and obtain a locally irregular coloring of~$^2G$. 

Note that not every simple graph is locally irregular. 
As we almost exclusively deal with locally irregular colorings, we say $G$ is \textit{colorable} if it admits a locally irregular coloring with any number of colors.
If $G$ is not colorable, we say it is \textit{uncolorable}.
In \cite{Baudon Bensmail Przybylo Wozniak} Baudon, Bensmail, Przybyło, and Woźniak fully characterized the family $\mathfrak{T'}$ of all simple uncolorable graphs.
Interestingly, all simple connected uncolorable graphs are special subcubic cacti, and, except for odd-length cycles, each such graph contains only cycles of length three.
Authors of~\cite{Baudon Bensmail Przybylo Wozniak} also conjectured that every simple connected graph $G \notin \mathfrak{T'}$ satisfies $\operatorname{lir}(G)\leq 3$.
After providing a counterexample to this conjecture i.e. the bow-tie graph which needs four colors in a locally irregular edge coloring~\cite{Sedlar Skrekovski}, Sedlar and \v Skrekovski restated the conjecture:
\begin{cnj}[Local Irregularity Conjecture~\cite{Baudon Bensmail Przybylo Wozniak},~\cite{Sedlar Skrekovski 2}]
\label{graph3}
    Every simple connected locally irregular colorable graph $G$ different from the bow-tie graph satisfies $\operatorname{lir}(G)\leq 3$.
\end{cnj}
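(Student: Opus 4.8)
\medskip
\noindent\textbf{Towards a proof.}
Since the uncolorable graphs of $\mathfrak{T'}$ and the bow-tie are explicitly excluded, and since a constant upper bound on $\operatorname{lir}$ over all colorable graphs is already known (Bensmail, Merker and Thomassen, later improved by Lužar, Przybyło and Soták), the whole difficulty is quantitative: one must push that constant down to $3$. The plan is to reduce to a well-behaved host graph and then split the argument into a dense and a sparse regime. First I would eliminate the obstructions coming from cut structure: bridges and triangles are exactly what make graphs delicate here --- the members of $\mathfrak{T'}$ are all odd cycles or triangle-cacti, and the only colorable graph known to need more than three colours, the bow-tie, is two triangles sharing a vertex --- so, using a block decomposition together with a direct treatment of the few near-cactus configurations in the spirit of Sedlar and Škrekovski \cite{Sedlar Skrekovski}, \cite{Sedlar Skrekovski 2}, one should be able to assume that $G$ is $2$-edge-connected, or becomes so after deleting a peripheral forest whose colours can be re-merged at the end.

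For $2$-edge-connected $G$ I would then split by $\delta(G)$. When $\delta(G)$ exceeds an absolute constant, invoke (or re-derive) Przybyło's theorem that $\operatorname{lir}(G)\le 3$: partition $E(G)$ into three classes uniformly at random, so that with high probability every vertex $v$ has about $\deg_G(v)/3$ edges in each class; the only bad events are an edge $uv$ whose endpoints pick up equal positive degrees in its class, and these events are local and rare, so the Lovász Local Lemma --- or a deterministic repair rerouting $O(1)$ edges near each conflict --- removes them. The point is that there is enormous slack in this regime, because ``locally irregular'' is required only of \emph{induced} colour classes and so ignores isolated vertices completely.

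The crux is the bounded-degree, $2$-edge-connected case. Here I would take a proper sum edge-weighting $w\colon E(G)\to\{1,2,3\}$, which exists by Keusch's theorem (the $1$-$2$-$3$ Conjecture), and use it not as a colouring itself but as a \emph{scaffold}: orient the edges and route their weights so that in colour class $i$ the degree of $v$ records the part of $\sigma(v)=\sum_{e\ni v}w(e)$ carried through class $i$, so that adjacent vertices differing in a partial sum automatically differ in degree in the corresponding class. Making all three classes simultaneously locally irregular then becomes a constraint problem concentrated on the low-degree vertices, which I would handle via an ear decomposition of $G$: process the ears from the outermost inward, maintaining the invariant that on the already-coloured part adjacent vertices have distinct degrees in every used colour, and spend the two endpoints of each new ear to absorb the parity defects it creates.

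I expect the main obstacle to sit exactly at the boundary between the two regimes and in the small-degree gadgets. Vertices of degree $2$ or $3$, and triangles, leave almost no room to manoeuvre --- which is precisely why the bow-tie genuinely requires four colours --- so the induction must carry enough structural information (say, forbidding certain equal-degree patterns in advance, or pre-committing the colours of short cycles) to guarantee that any conflict that arises can be pushed to a vertex with spare incident edges. A secondary nuisance is the bookkeeping for graphs lying ``one block away'' from $\mathfrak{T'}$ or from the bow-tie, where a single exceptional block forces a nonstandard colouring of its neighbourhood; verifying that the finitely many such configurations cooperate is tedious but, with the framework in place, routine.
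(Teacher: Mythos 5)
This statement is labelled as a \emph{conjecture} in the paper, and the paper contains no proof of it: it is the open Local Irregularity Conjecture, which the paper only cites and then attacks in special cases (and, in fact, the paper's own new results concern the different Conjecture~\ref{main} about 2-multigraphs). So there is no proof in the paper to compare yours against, and your text is, by its own admission, a research programme rather than a proof. Treated as a proof attempt it has genuine gaps at every stage. The block-decomposition reduction to the $2$-edge-connected case is not routine: the degree of a cut vertex in a colour class is the sum of its degrees over the blocks meeting that class, so colourings of blocks cannot simply be ``re-merged''; the bow-tie itself (two $2$-edge-connected blocks, each individually uncolorable) is exactly the configuration where this fails. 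In the dense regime, Przybyło's theorem requires $\delta(G)\geq 10^{10}$, and your random-partition-plus-Local-Lemma sketch does not close the enormous gap between ``bounded degree'' and $10^{10}$; the bad events are not rare when degrees are moderate, and the ``deterministic repair rerouting $O(1)$ edges'' is unspecified precisely where it would have to do all the work.

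The most concrete error is in the proposed use of Keusch's theorem as a scaffold. The degree of a vertex in a colour class counts \emph{edges}, not weights, so the assertion that ``adjacent vertices differing in a partial sum automatically differ in degree in the corresponding class'' is false: one weight-$3$ edge and three weight-$1$ edges contribute equally to $\sigma(v)$ but give class-degrees $1$ and $3$. The paper's Section~\ref{regular_section} shows exactly when this translation \emph{does} work: in a $2$-multigraph one can replace an edge of weight $w\in\{0,1,2\}$ by exactly $w$ red parallel edges, so the red degree literally equals the sum, and regularity then forces the blue degrees apart as well. Neither the multiplicity-two trick nor the regularity is available for a general simple graph, which is why the conjecture remains open and why your final ``ear decomposition maintaining the invariant'' step --- the actual content of any proof --- is left entirely unspecified.
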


Since all simple connected uncolorable graphs are special subcubic cacti, a lot of attention was given to study the locally irregular indices of graphs from these superclasses.  
In~\cite{Sedlar Skrekovski 2} it was proved that $\operatorname{lir}(G) \leq 3$ for every colorable cactus $G$ different from the bow-tie graph.
For claw-free subcubic colorable graphs, the Conjecture~\ref{graph3} was confirmed in~\cite{Luzar Macekova} by Lužer et al.

\begin{figure}[h!]
\centering
    \begin{tikzpicture}
    \begin{pgfonlayer}{nodelayer}
    \node [style={black_dot}] (0) at (-0.5, 0) {};
    \node [style={black_dot}] (1) at (0.5, 0) {};
    \node [style={black_dot}] (2) at (-0.5, 1) {};
    \node [style={black_dot}] (3) at (-1.5, 0.5) {};
    \node [style={black_dot}] (4) at (-1.5, -0.5) {};
    \node [style={black_dot}] (5) at (-0.5, -1) {};
    \node [style={black_dot}] (6) at (0.5, -1) {};
    \node [style={black_dot}] (7) at (0.5, 1) {};
    \node [style={black_dot}] (8) at (1.5, 0.5) {};
    \node [style={black_dot}] (9) at (1.5, -0.5) {};
    \end{pgfonlayer}
    \begin{pgfonlayer}{edgelayer}
    \draw [style={red_edge}] (2) to (0);
    \draw [style={red_edge}] (0) to (1);
    \draw [style={red_edge}] (0) to (5);
    \draw [style={blue_edge}] (5) to (4);
    \draw [style={blue_edge}] (4) to (0);
    \draw [style={blue_edge}] (1) to (9);
    \draw [style={blue_edge}] (9) to (6);
    \draw [style={green_edge}] (0) to (3);
    \draw [style={green_edge}] (3) to (2);
    \draw [style={green_edge}] (7) to (8);
    \draw [style={green_edge}] (8) to (1);
    \draw [style={yellow_edge}] (7) to (1);
    \draw [style={yellow_edge}] (1) to (6);
    \end{pgfonlayer}
    \end{tikzpicture}
\caption{The bow-tie graph $B$ and its locally irregular coloring with four colors.}
\label{bow-tie graph}
\end{figure}
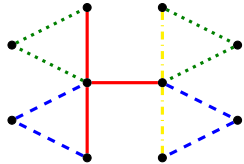

Conjecture~\ref{graph3} was also confirmed for simple graphs from other classes, namely trees in~\cite{Baudon Bensmail}, split graphs in~\cite{Lintzmayer Mota Sambinelli}, complete $k$-partite graphs where $k>1$ and powers of cycles in~\cite{Grzelec Madaras Onderko Roman Sotak}, $r$-regular graphs where $r\geq 10^7$ in~\cite{Baudon Bensmail Przybylo Wozniak}, and graphs with minimum degree at least $10^{10}$ in~\cite{Przybylo}. 
For a simple planar graph $G$, it was proven in \cite{Bensmail Dross Nisse} that $\operatorname{lir}(G)\leq 15$. 
The constant upper bound 328 on locally irregular index of a general colorable simple graph was given by Bensmail, Merker and Thomassen in~\cite{Bensmail Merker Thomassen}, and later lowered to 220 by Lu\v zar, Przybyło and Soták in \cite{Luzar Przybylo Sotak}.

The locally irregular colorings of simple graphs were inspired by \textit{neighbor-sum-distinguishing edge colorings} and multiset neighbor distinguishing edge colorings. We introduce the first of these concepts, as it is useful for proving new results on the locally irregular colorings of 2-multigraphs. \textit{Neighbor-sum-distinguishing edge colorings} are colorings by integers in which sums of colors used on edges incident to neighboring vertices are different. The smallest number of colors for which a neighbor-sum-distinguishing edge coloring exists, the \textit{neighbor-sum-distinguishing index}, is denoted by $\chi_{\Sigma}(G)$. In~\cite{Karonski Luczak Thomason}, Karoński, Łuczak, and Thomason introduced the problem of determining $\chi_{\Sigma}(G)$ and proposed the well-known 1-2-3 Conjecture which states that colors 1, 2 and 3 are enough to distinguish by sums the neighbors in every simple graph without an isolated edge.
Recently, in 2023, the 1-2-3 Conjecture was confirmed by Keusch:
\begin{tw}[\cite{Keusch}]\label{123}
    For every simple graph $G$ without isolated edges there is a neighbor-sum-distinguishing edge coloring of $G$ with colors $1$, $2$ and $3$.
\end{tw}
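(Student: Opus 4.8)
\emph{Sketch of a proof strategy.} The plan is to reduce to a clean core case and then run a sequential ``absorption'' argument, pushing the palette down to three colours at the end. First I would dispose of the trivial reductions: a neighbour-sum-distinguishing $\{1,2,3\}$-colouring can be assembled component by component, and a connected graph with at most one edge is either a single vertex or an isolated $K_2$ (the excluded configuration), so it suffices to treat a connected $G$ with at least two edges. I would then peel off the vertices of degree $1$ together with a handful of small exceptional graphs by hand, so that the substantive case is a connected $G$ with $\delta(G)\ge 2$; this matches the fact that the only genuine obstruction to such a colouring is an isolated edge.

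For the core case the plan is a sequential weight-assignment procedure, in the spirit of the method of Kalkowski, Karoński and Pfender that yields the bound with palette $\{1,\dots,5\}$. I would fix an ordering $v_1,\dots,v_n$ of $V(G)$ --- a depth-first or degeneracy ordering, so that every $v_i$ with $i\ge 2$ has a neighbour among $v_1,\dots,v_{i-1}$ --- and process the vertices one at a time, maintaining the invariant that all edges inside $\{v_1,\dots,v_i\}$ carry weights from $\{1,2,3\}$, each of $v_1,\dots,v_i$ has a \emph{committed} sum, committed adjacent vertices have distinct sums, and the committed sum of each already-processed vertex can still be shifted within a set of at least two admissible values by re-weighting its edges to the not-yet-processed vertices. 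When $v_i$ is processed, its sum splits into a now-frozen contribution from processed neighbours and a tunable contribution of its $u_i$ unprocessed edges, which ranges over a set of size about $2u_i+1$; one must pick a value that avoids the committed sums of the $\deg(v_i)-u_i$ processed neighbours while leaving the later vertices enough room, and the run is closed using the slack maintained at the last vertex. A counting check gives this with five colours, but it is exactly tight, and fails, for low-degree vertices reached late in the order.

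The hard part will be squeezing the palette down to $\{1,2,3\}$, and essentially all the difficulty concentrates in vertices of degree $2$ and $3$ processed with very few unprocessed neighbours left. To handle them I would (a) exploit that with palette $\{1,2,3\}$ each individual edge carries an independent $\pm1$ swing, and coordinate the swings of several edges rather than bundling them in steps of $2$; (b) allow a bounded \emph{defect} set of vertices to remain uncommitted during the sweep and then resolve all of them simultaneously, by propagating $\pm1$ corrections along a carefully chosen spanning system of paths (equivalently, a suitable even subgraph) engineered so that no correction ever creates a new conflict --- the existence of such a system is a global structural statement about $G$ and is, I expect, the real crux of the whole argument; and (c) control the process by a potential function measuring total conflict, arguing by a Lovász-Local-Lemma or entropy-compression style estimate, in the manner of Przybyło's results for dense and regular graphs~\cite{Przybylo}, that the procedure halts with the potential equal to zero. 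I would not expect a purely algebraic attack via the Combinatorial Nullstellensatz to close the gap, since the box $\{1,2,3\}^{E(G)}$ is too small to supply the slack such an argument needs; the combinatorial absorption route, with the structural lemma of step (b) as its centrepiece, seems unavoidable.
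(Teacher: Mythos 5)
This statement is not proved in the paper at all: it is Keusch's solution of the 1--2--3 Conjecture, imported verbatim with a citation to \cite{Keusch} and used as a black box in Section~\ref{regular_section}. So there is no in-paper argument to compare yours against; the only question is whether your text constitutes a proof of the theorem itself. It does not.

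What you have written is a research programme, not a proof. The reductions in your first paragraph (componentwise assembly, handling degree-one vertices and small exceptions, reducing to connected $G$ with $\delta(G)\ge 2$) are fine but standard, and the Kalkowski--Karo\'nski--Pfender sweep you describe is exactly the known argument that yields the palette $\{1,\dots,5\}$. The entire content of the theorem is the passage from five colours to three, and at precisely that point your argument dissolves into a list of intentions: ``I would exploit\dots'', ``I would allow a bounded defect set\dots'', ``the existence of such a system is \dots, I expect, the real crux''. You have correctly located the crux, but locating it is not the same as resolving it. In particular, the structural lemma in your step (b) --- a spanning system of paths (or an even subgraph) along which $\pm1$ corrections can be propagated without ever creating new conflicts --- is asserted to exist with no construction and no proof; for low-degree vertices the $\pm1$ swings of incident edges are few and heavily constrained by both endpoints simultaneously, and it is exactly this interference that kept the conjecture open for twenty years. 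Your step (c) compounds the gap: Keusch's actual proof is a deterministic, carefully ordered local-modification argument (refining his earlier $\{1,2,3,4\}$ result), not a Lov\'asz-Local-Lemma or entropy-compression argument, and no one has made a probabilistic scheme of the kind you sketch work for sparse graphs, where the dependencies are too dense relative to the available randomness. As it stands, the proposal proves the weaker, long-known $5$-colour bound and leaves the theorem itself unestablished.
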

We utilize this result of Keusch to derive new results on the locally irregular chromatic index of regular 2-multigraphs in Section~\ref{regular_section}; we prove the following conjecture, proposed by Grzelec and Woźniak, in the case of regular 2-multigraphs:
\begin{cnj}[Local Irregularity Conjecture for 2-multigraphs \cite{Grzelec wozniak}]
\label{main}
For every connected graph $G$ which is not isomorphic to $K_2$ we have $\operatorname{lir}(^2G)\leq 2$.
\end{cnj}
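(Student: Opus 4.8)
The plan is to translate the two-color problem on $^2G$ into a single $\{0,1,2\}$-labeling of the edges of $G$, and then to build such a labeling from a neighbor-sum-distinguishing coloring supplied by Theorem~\ref{123}. First I would record the following reduction. In $^2G$ every edge $uv$ of $G$ appears as two parallel copies, so a coloring of $^2G$ by the colors red and blue is encoded by the number $c(uv)\in\{0,1,2\}$ of red copies on each edge $uv$ of $G$ (with $c=2,1,0$ meaning red-red, red-blue, blue-blue respectively). Setting $R(v)=\sum_{e\ni v}c(e)$ and $B(v)=2\deg_G(v)-R(v)$ for the red and blue degrees, the coloring is locally irregular exactly when (i) $R(u)\neq R(v)$ for every edge $uv$ with $c(uv)\geq 1$, and (ii) $B(u)\neq B(v)$ for every edge $uv$ with $c(uv)\leq 1$. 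So it suffices to find $c\colon E(G)\to\{0,1,2\}$ meeting (i) and (ii). Crucially, a monochromatically colored edge ($c\in\{0,2\}$) must be separated only in the color it carries, whereas a red-blue edge ($c=1$) must be separated in both colors; this asymmetry is the flexibility I would exploit.

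Next I would feed in Keusch's theorem. Since $G$ is connected and not isomorphic to $K_2$, it has no isolated edge, so Theorem~\ref{123} provides a coloring $\phi\colon E(G)\to\{1,2,3\}$ with $\sigma(u)\neq\sigma(v)$ for adjacent $u,v$, where $\sigma(v)=\sum_{e\ni v}\phi(e)$. The natural translation is $c(e)=\phi(e)-1$, which gives $R(v)=\sigma(v)-\deg_G(v)$ and $B(v)=3\deg_G(v)-\sigma(v)$. If $G$ is regular, then $\deg_G$ is constant, so along any edge $uv$ one has $R(u)-R(v)=\sigma(u)-\sigma(v)\neq 0$ and $B(u)-B(v)=\sigma(v)-\sigma(u)\neq 0$; hence (i) and (ii) hold for every edge at once. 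This is the clean mechanism for regular graphs, and it shows that the conjecture falls out whenever degree differences are absent.

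The hard part is that for non-regular $G$ the degree terms spoil this cancellation: condition (i) now reads $\sigma(u)-\sigma(v)\neq\deg_G(u)-\deg_G(v)$, which the guarantee $\sigma(u)\neq\sigma(v)$ does not by itself ensure. My plan for the general case is to split $E(G)$ according to the degrees of the endpoints and treat two regimes. The edges joining vertices of equal degree span, within each degree class, a locally regular piece on which the 1-2-3 mechanism of the previous step can be localized; the edges joining vertices of distinct degree are the ones where I would instead try to profit from the built-in degree gap, coloring them monochromatically so that the required inequality is helped rather than hindered by $\deg_G(u)\neq\deg_G(v)$. The main obstacle---and the reason the conjecture is still open outside the classes handled in this paper---is to make these two choices globally consistent: a single assignment $c$ must break every equal-degree tie through the $\sigma$-values while keeping every mixed-degree edge distinguished in whatever color it ends up carrying, and one neighbor-sum-distinguishing coloring of $G$ need not deliver both at once. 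I would therefore proceed by a case analysis over the degree profile of $G$, reusing the regular-type argument stratum by stratum, designing the monochromatic pattern on the edges between strata directly, and dealing with the small degenerate configurations near $^2K_2$ separately---mirroring the structure of the special-case proofs that follow.
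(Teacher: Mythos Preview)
The statement you are attempting is a \emph{conjecture}, and the paper does not prove it in full; it establishes only the cases of regular graphs, split graphs, and two special subcubic families, together with weaker constant bounds for planar and general subcubic $G$. Your reduction to a $\{0,1,2\}$-edge labeling $c$ of $G$ with $R(v)=\sum_{e\ni v}c(e)$ and $B(v)=2\deg_G(v)-R(v)$, followed by taking $c=\phi-1$ from Keusch's coloring $\phi$, is exactly the argument the paper gives for the regular case: when $\deg_G$ is constant, $R(u)-R(v)=\sigma(u)-\sigma(v)\neq 0$ and likewise for $B$, so both (i) and (ii) hold on every edge. That portion of your write-up is correct and coincides with the paper's proof verbatim in spirit.

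For non-regular $G$, however, your proposal is not a proof and you say so yourself. The obstruction you identify is real: condition (i) becomes $\sigma(u)-\sigma(v)\neq\deg_G(u)-\deg_G(v)$, which Theorem~\ref{123} does not control, and your suggested stratification by degree classes with monochromatic cross-edges does not come with any mechanism ensuring global consistency. The paper does \emph{not} pursue this route at all outside the regular case; its proofs for split graphs and the subcubic families are structural and ad hoc (exploiting the clique/independent-set partition, pertinent decompositions, and explicit local recolorings), not sum-based. So beyond the regular case your outline diverges from the paper and, as you concede, leaves the conjecture open---which is also where the paper leaves it.
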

In~\cite{Grzelec wozniak} and~\cite{Grzelec wozniak2} Local Irregularity Conjecture was proved for graphs of several classes:
\begin{tw}[\cite{Grzelec wozniak}, \cite{Grzelec wozniak2}]
\label{cycle}
The Local Irregularity Conjecture for $2$-multigraphs holds for: paths, cycles, wheels $W_n$, complete graphs $K_n$, for $n \geq 3$, bipartite graphs, complete k-partite graphs, for $k \geq 3$ and cacti.
\end{tw}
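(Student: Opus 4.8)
The plan is to prove each item of Theorem~\ref{cycle} by exhibiting, for the relevant graph $G$, an explicit locally irregular $2$-edge-coloring of $^2G$; no single argument covers all the classes, but they share a common combinatorial core. Fix the colors red and blue. In $^2G$ each edge $e=uv$ of $G$ is realized in one of three \emph{types}: $RR$ (both copies red), $BB$ (both copies blue), or $RB$ (one copy of each color). These contribute to the red degree $\deg_R$ of each endpoint the amounts $2$, $0$, $1$ respectively, and the complementary amounts $0$, $2$, $1$ to the blue degree; thus the red submultigraph consists of the $RR$-edges with multiplicity $2$ and the $RB$-edges with multiplicity $1$, and the coloring is locally irregular precisely when $\deg_R(u)\neq\deg_R(v)$ for every edge $uv$ carrying a red copy, and symmetrically in blue. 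Two elementary facts are used throughout: a color class having a component that is a single edge or a single multiedge is never locally irregular, whereas a color class all of whose components are paths on three vertices automatically is.

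For paths and cycles I would proceed by a direct construction. Writing $P_n=v_1\cdots v_n$, choose the red multiplicities $r_i\in\{0,1,2\}$ of the consecutive edges by a short periodic rule — for instance one making the blue multiplicities $b_i=2-r_i$ run $1,1,2,2,1,1,2,2,\dots$ — so that the blue class is the whole doubled path in which entries two apart always differ, which ensures its local irregularity, while the red class splits into blocks of two consecutive edges, each a path on three vertices. The handful of residues of $n$ modulo the period are dealt with by modifying the last one or two edges, and the smallest cases by hand. Cycles $C_n$ are handled the same way using the cyclic version of the pattern, branching on $n$ modulo a small number and tidying a bounded set of edges where the pattern closes up. Wheels $W_n$ then follow by taking such a coloring of the rim $^2C_n$ and coloring every spoke with type $RB$: this raises each rim vertex's two color degrees by one without disturbing adjacent-rim-vertex inequalities, while the hub acquires color degree $n$ in each color, which for $n$ large enough separates it from every rim vertex; the few small wheels are checked directly.

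For bipartite graphs $G$ with parts $X$ and $Y$ I would use parity. Since every edge joins $X$ to $Y$, it suffices to arrange that every red degree in $X$ is even and every red degree in $Y$ is odd, for then adjacent vertices differ in parity in red, and also in blue (whose $X$-degrees become odd and $Y$-degrees even). Modulo $2$ the red degree of a vertex equals the number of incident $RB$-edges, so one is asking for a subset $S\subseteq E(G)$ — the set of $RB$-edges — realizing the prescribed degree parities; in a connected graph such an $S$ exists precisely when the number of vertices prescribed odd parity is even, that is when $|Y|$ is even, so after possibly swapping $X$ and $Y$ this settles every case except when both parts are odd. That remaining subcase is repaired by dropping one vertex from the parity prescription and coloring its incident edges monochromatically, making it isolated in one color and extremal in the other. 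For complete $k$-partite graphs with $k\ge 3$, including $K_n$ with $n\ge 3$, and for cacti I would again build explicit colorings: in the dense case one splits the vertex set and uses an unbalanced red ``core'' together with a few corrective $RB$-edges so that both color classes become locally irregular, exactly as in the small computation that handles $^2K_4$; for a cactus one roots its block tree — whose blocks are single edges and cycles — and processes the blocks one by one, applying the cycle construction inside each cyclic block and the path construction along bridges, while keeping track of the color degrees already amassed at the cut vertices.

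The main obstacle, and the reason the case analysis is unavoidable, is exactly this coordination at shared vertices and at the seams of a periodic pattern: making a single color class locally irregular alone is easy, but the two classes compete for the two copies of every edge, so a local fix that removes a clash in red can create a single-edge component or an equal-degree adjacent pair in blue. Keeping this under control — the parity correction for an odd--odd bipartition, the closing-up edges of a cycle or wheel, and above all the cut vertices of a cactus, where a choice forced by one block must remain compatible with every incident block — is the technical heart of the proof; everything else reduces to the two elementary facts above together with bounded bookkeeping.
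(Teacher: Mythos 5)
This statement is not proved in the paper at all: it is an imported result, quoted from \cite{Grzelec wozniak} and \cite{Grzelec wozniak2}, so there is no internal proof to compare against. Judged on its own, your proposal is a reasonable outline for the easy classes but does not constitute a proof of the theorem. The strongest part is the bipartite case: the observation that it suffices to make red degrees even on $X$ and odd on $Y$, that these parities are governed solely by the set $S$ of red-blue multiedges, and that such an $S$ exists in a connected graph iff the number of odd-prescribed vertices is even, is correct and gives a clean argument (note that the parity separation automatically rules out isolated monochromatic edges as well). Even there, though, the odd--odd repair is underspecified: after making $w$ red-isolated with blue degree $2d(w)$, a neighbour $x$ of $w$ with all its multiedges coloured blue-blue can have blue degree $2d(x)=2d(w)$, so you must choose $w$ of maximum degree and additionally perturb the free RR/BB choices at such neighbours; this is fixable but is not ``automatic extremality.'' The path, cycle, and wheel constructions are plausible modulo the boundary cases you defer.

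The genuine gap is in the last two classes. For complete graphs and complete $k$-partite graphs you only assert that ``an unbalanced red core together with a few corrective $RB$-edges'' works, with no construction and no verification that both colour classes are simultaneously locally irregular; this is a statement of intent, not an argument. More seriously, for cacti the block-by-block processing you describe is exactly the naive strategy whose failure modes are the whole difficulty: the uncolourable simple graphs are all cacti built from triangles and short paths, so this is the class where the two copies of each edge give the least slack, and a choice forced inside one block at a cut vertex can make every completion of an adjacent block (a triangle, or a path of length one or two) conflict. Making the induction go through requires formulating and maintaining a strengthened invariant at the attachment vertices (compare the property used in Theorem~\ref{subcubic_independent} of the present paper, which forbids a red-blue pendant multiedge whose end has red and blue degree three), and identifying that invariant is the content of the dedicated paper \cite{Grzelec wozniak2}. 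Without it, ``keeping track of the colour degrees already amassed at the cut vertices'' does not close the argument.
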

Apart from that, it is proved in \cite{Grzelec wozniak} that every connected graph $G$ satisfies $\operatorname{lir}(^2G)\leq 76$ if $G$ is not isomorphic to $K_2$.

In addition to proving Conjecture~\ref{main} for regular graphs in Section~\ref{regular_section}, we prove it in the case of split graphs in Section~\ref{split_section}, and the case of special subcubic graphs in Section~\ref{subcubic_section}.

Locally irregular colorings of planar 2-multigraphs are studied in Section~\ref{planar_section}, and the upper bound of four is proved for the locally irregular chromatic index of a connected planar 2-multigraph different from $^2K_2$. In the case of planar 2-multigraphs, this result greatly improves the general upper bound of 76 proved by Grzelec and Wozńiak in~\cite{Grzelec wozniak} and the upper bound of 15 implied by the result of Bensmail, Dross, and Nisse~\cite{Bensmail Dross Nisse} for simple colorable planar graphs.

In Section~\ref{subcubic_section} we prove that $\operatorname{lir}(^2G) \leq 3$ if $G$ is a simple subcubic graph without isolated edges, which also improves the bound of four given by the result of Lužar, Przybyło, and Soták~\cite{Luzar Przybylo Sotak} for simple colorable subcubic graphs.

\section{Regular graphs}\label{regular_section}

We present a short proof that the Conjecture \ref{main} holds for regular 2-multigraphs. The proof is based on Theorem \ref{123}, recently proved by Keusch in \cite{Keusch}.

\begin{tw}
If $G$ is a connected regular simple graph not isomorphic to $K_2$ then $\operatorname{lir}(^2G)\leq 2$.
\end{tw}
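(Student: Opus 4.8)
The plan is to exploit Theorem~\ref{123} (the 1-2-3 Conjecture) as a black box. Let $G$ be a connected $r$-regular simple graph, $r \geq 1$, with $G \neq K_2$; since $G$ is connected and regular with more than two vertices (the case $r=1$ forces $G = K_2$, excluded, so in fact $r \geq 2$), $G$ has no isolated edge. Apply Theorem~\ref{123} to obtain a neighbor-sum-distinguishing edge coloring $c \colon E(G) \to \{1,2,3\}$. The idea is to read off from $c$ a decomposition of the multiedges of $^2G$ into two color classes so that at each vertex the two induced degrees encode the $c$-sum, and neighbors, having different $c$-sums, end up with different degree vectors in each class.

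The key step is the local translation. Consider a vertex $v$ of degree $r$ in $G$, incident to edges receiving colors in $\{1,2,3\}$ under $c$; in $^2G$ each such edge becomes a multiedge of multiplicity two. I would colour the two parallel copies of a multiedge $e$ according to $c(e)$: if $c(e) = 1$ colour both copies with color 1 (say red), if $c(e) = 3$ colour both copies with color 2 (blue), and if $c(e) = 2$ colour one copy red and one copy blue. Then at $v$ the red degree equals $2 a_1(v) + a_2(v)$ and the blue degree equals $2 a_3(v) + a_2(v)$, where $a_i(v)$ is the number of $c$-edges at $v$ of color $i$; note the red degree at $v$ equals $a_1(v)+\bigl(a_1(v)+a_2(v)+a_3(v)\bigr) - a_3(v) = a_1(v) + r - a_3(v)$, and similarly the blue degree is $a_3(v) + r - a_1(v)$. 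In particular the red degree at $v$ is $r + \bigl(a_1(v) - a_3(v)\bigr)$ and the blue degree is $r - \bigl(a_1(v) - a_3(v)\bigr)$. Now the $c$-sum at $v$ is $a_1(v) + 2a_2(v) + 3a_3(v) = 2r + \bigl(a_3(v) - a_1(v)\bigr)$ (using $a_1+a_2+a_3 = r$), so the quantity $a_1(v) - a_3(v)$ is an affine function of the $c$-sum. Since adjacent vertices $u,v$ have different $c$-sums, they have different values of $a_1 - a_3$, hence different red degrees and different blue degrees in $^2G$. This gives that each color class is locally irregular, so $\operatorname{lir}(^2G) \leq 2$.

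The one genuine obstacle is that a color class might be \emph{empty} or, more subtly, might fail to be a submultigraph in the required sense only if some vertex is isolated within that class while a neighbor is also isolated — but isolated vertices carry no constraint, so the real worry is a color class being empty at \emph{every} incident vertex, i.e.\ nonempty classes are automatically fine and an empty class is vacuously locally irregular. Still, one should check the edge case where $a_1 - a_3$ is constantly $0$ everywhere (e.g.\ $c \equiv 2$): then the red and blue classes each form a copy of $G$ itself, which is $r$-regular and hence \emph{not} locally irregular unless $G$ has no edges. This is the point requiring care. To handle it I would argue that a constant-$2$ coloring cannot be neighbor-sum-distinguishing on any graph with an edge (two adjacent vertices of equal degree would have equal sums), so for regular $G$ the coloring $c$ must use more than one color; more generally, if $a_1(v) - a_3(v)$ were the same nonzero... no, rather: the map $v \mapsto a_1(v)-a_3(v)$ is injective on each edge, so on each edge the two red degrees differ, which is exactly local irregularity of the red class (and likewise blue). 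Thus no separate case analysis is needed beyond observing that an all-equal $c$-sum assignment is impossible on an edge. Finally I would note $r \geq 2$ here because $K_2$ is excluded and any other connected $1$-regular graph does not exist, so the no-isolated-edge hypothesis of Theorem~\ref{123} is met and the argument is complete.
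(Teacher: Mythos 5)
Your proposal is correct and follows essentially the same route as the paper: both apply Keusch's 1-2-3 theorem and translate the three integer colors into blue-blue / red-blue / red-red multiedges so that the red degree at each vertex is an affine function of the neighbor-sum, then use regularity of $^2G$ to get the blue degrees for free. Your assignment is the paper's up to swapping red and blue (the paper shifts colors to $\{0,1,2\}$ first, which is only cosmetic), and your closing observation that distinct red degrees across every edge of $G$ already give local irregularity of both color classes correctly disposes of the edge cases you raise.
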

\begin{proof}
Let $G$ be $d$-regular, where $d>1$. 
From Theorem~\ref{123} we obtain a neighbor-sum-distinguishing edge coloring $\varphi$ of $G$ using colors 1, 2, and 3. 
Clearly, an edge coloring $\psi$ of $G$ such that $\psi(e)=\varphi(e)-1$ is neighbor-sum-distinguishing, too.
From $\psi$, we create a locally irregular edge coloring of $^2G$ in the following way:
every edge of color 0 is replaced by a blue-blue multiedge, every edge of color 1 is replaced by a red-blue multiedge, and every edge of color 2 is replaced by a red-red multiedge. 
Note that the number of red edges incident to a vertex equals the sum of colors used on those edges in $\psi$. 
Hence, from the fact that $\psi$ is neighbor-sum-distinguishing, we get that the red degrees of every two adjacent vertices in the coloring of $^2G$ are different.
Since $^2G$ is $2d$-regular, two vertices have different blue degrees whenever they have different red degrees.
Thus, the coloring of $^2G$ is locally irregular.
\end{proof}

\section{Split graphs}\label{split_section}

We introduce the notion, and two theorems from \cite{Lintzmayer Mota Sambinelli} which we use to prove our main result for 2-multigraphs obtained from split graphs.
We say that a graph $G(X,Y)$ is \textit{split} if there exists a partition $\{X, Y\}$ of the vertex set of $G$ such that $G[X]$ is a complete graph and $Y$ is an independent set. Let $G(X,Y)$ be a split graph with $X = \{v_1, \ldots ,v_n\}$. For any vertex $v_i \in X$, we denote by $d_i$ the number of neighbors of $v_i$ in $Y$.

\begin{tw}[\cite{Lintzmayer Mota Sambinelli}]\label{split1}
Let $G(X,Y)$ be a split graph with $X = \{v_1, \ldots, v_n\}$ where $d_1 \geq \ldots \geq d_n$. If $n \geq 10$, then the following holds
\begin{itemize}
\item $\operatorname{lir}(G)\leq 2$ if and only if $d_1 \geq \lfloor \frac{n}{2} \rfloor$ or $d_2 \geq 1$;
\item $\operatorname{lir}(G) = 3$ if and only if $d_1 < \lfloor \frac{n}{2} \rfloor$ and $d_2 = 0$.
\end{itemize}
\end{tw}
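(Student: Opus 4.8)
The plan is to first record that the two displayed equivalences are complementary once we know that $\operatorname{lir}(G)\le 3$ for every split graph with $n\ge 10$ (colorability is free: such a $G$ contains $K_{10}$, hence is not one of the subcubic‑cactus exceptions, and $\operatorname{lir}(G)\le 3$ then follows by running the $2$‑coloring below with a third color held in reserve to repair the few remaining clashes, or by invoking Conjecture~\ref{graph3} for split graphs). So it suffices to prove: (B) if $d_1\ge\lfloor n/2\rfloor$ or $d_2\ge 1$ then $\operatorname{lir}(G)\le 2$; and (C) if $d_1<\lfloor n/2\rfloor$ and $d_2=0$ then $\operatorname{lir}(G)\ge 3$. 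The first equivalence is then (B) together with the contrapositive of (C), and the second is (C) together with $\operatorname{lir}(G)\le 3$ and the contrapositive of (B).

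For (C), when $d_2=0$ only $v_1$ has neighbors in $Y$, so $G$ is $K_n$ with $d_1$ pendant edges at $v_1$ (isolated vertices of $Y$ are irrelevant). Suppose a locally irregular red/blue coloring existed. For $i,j\ge 2$ the vertices $v_i,v_j$ have $G$‑degree $n-1$, so $\deg_R(v_i)=\deg_R(v_j)$ would force $\deg_B(v_i)=\deg_B(v_j)$, and then the clique edge $v_iv_j$ violates local irregularity whichever color it carries. Hence the red clique‑degrees of $v_2,\dots,v_n$ are $n-1$ distinct values in $\{0,\dots,n-1\}$; since a graph on $n$ vertices cannot contain both a vertex of degree $0$ and one of degree $n-1$, the missing value is $0$ or $n-1$, so after possibly interchanging the two colors these degrees are exactly $1,2,\dots,n-1$. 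The red clique‑subgraph is then (up to renaming vertices) the unique graph $A_n$ on $n$ vertices whose degree sequence contains each of $1,\dots,n-1$; in it $v_1$ is a repeated‑degree vertex, so $v_1$ has red clique‑degree $\lfloor n/2\rfloor$ and its clique neighbors have degrees $\lceil n/2\rceil,\lceil n/2\rceil+1,\dots,n-1$, while symmetrically the blue clique‑subgraph is $A_{n-1}$ together with one isolated vertex, with $v_1$ of blue clique‑degree $\lfloor(n-1)/2\rfloor$ and blue clique neighbors of degrees $\lceil(n-1)/2\rceil,\dots,n-2$. If $v_1$ receives $r$ red and $b=d_1-r$ blue pendants, then $\deg_R(v_1)=\lfloor n/2\rfloor+r$ must avoid $\{1\}\cup\{\lceil n/2\rceil,\dots,n-1\}$ and $\deg_B(v_1)=\lfloor(n-1)/2\rfloor+b$ must avoid $\{1\}\cup\{\lceil(n-1)/2\rceil,\dots,n-2\}$; treating $n$ even and $n$ odd separately, a short computation shows this forces $r+b\ge\lfloor n/2\rfloor$, contradicting $d_1<\lfloor n/2\rfloor$. (For $d_1=0$ this is exactly the familiar fact $\operatorname{lir}(K_n)\ge 3$.)

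For (B) I split into two cases. If $d_2=0$ then $d_1\ge\lfloor n/2\rfloor$; take the red clique‑subgraph to be $A_n$ with $v_1$ the repeated‑degree vertex, and put all $d_1$ pendants red when $n$ is even and all $d_1$ pendants blue when $n$ is odd. The computation above then raises $\deg_R(v_1)$ (resp.\ $\deg_B(v_1)$) above the degrees of all its clique neighbors, kills the single degree collision inside $A_n$ (resp.\ inside the blue copy of $A_{n-1}$), and leaves both color classes locally irregular on the clique and trivially so at the pendant ends. If $d_2\ge 1$, I would instead seek target degrees $\deg_R(v_i)=\rho_i$, $\deg_B(v_i)=(n-1+d_i)-\rho_i$ that are injective on each level $\{i:d_i=t\}$ (which, since $\deg_G$ is constant on a level, automatically legalizes all monochromatic clique edges inside a level), and handle the few cross‑level coincidences $\rho_i=\rho_j$ (resp.\ equal blue‑degrees) by making the joining edge blue (resp.\ red); the $\rho_i$ are realized by a suitable red clique‑subgraph together with the available $Y$‑edges, crucially the guaranteed ones at $v_1$ and $v_2$, which supply the two units of extra freedom that break the rigidity forcing the antiregular graph. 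Finally one checks the $Y$‑side: each $y\in Y$ is an independent vertex of small degree, and it is enough to route its edges so that, for every color $c$ and every $v_i$ adjacent to $y$ in color $c$, either $\deg_c(v_i)\neq 1$ or $v_i$ is the only $c$‑neighbor of $y$, which is easy because the $v_i$ have large degree.

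The main obstacle I expect is the $d_2\ge 1$ case of (B). Unlike the rigid $d_2=0$ situation there is a lot of room, but the target sequence $(\rho_i)$ must be simultaneously realizable inside the clique — subject to the usual degree‑sequence constraints, e.g.\ a level of size $n-2$ cannot realize all of $\{0,1,\dots,n-3\}$ on its internal edges — and compatible with the low‑degree vertices of $Y$, and this becomes delicate exactly when only two or three of the $d_i$ are positive. The lower bound (C) is conceptually the heart, but once the reduction to the antiregular graph is in hand it reduces to a parity computation.
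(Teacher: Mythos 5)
This statement is not proved in the paper at all: Theorem~\ref{split1} is imported verbatim from~\cite{Lintzmayer Mota Sambinelli} and used as a black box in Section~\ref{split_section}, so there is no in-paper argument to compare yours against. Judged on its own, your proposal gets the genuinely rigid parts right. The logical reduction to (B) and (C) is sound, and your treatment of the $d_2=0$ situation is essentially correct: when only $v_1$ sees $Y$, the graph is $K_n$ with $d_1$ pendants at $v_1$, the distinct red clique-degrees of $v_2,\dots,v_n$ force the red clique-subgraph to be the (unique, by the standard peel-off induction) quasi-antiregular graph with $v_1$ of degree $\lfloor n/2\rfloor$ adjacent exactly to the vertices of degrees $\lceil n/2\rceil,\dots,n-1$, and the parity count on $r+b=d_1$ then yields both the lower bound (C) and, run in reverse, the upper bound of $2$ when $d_1\ge\lfloor n/2\rfloor$. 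I checked the even and odd cases and your inequalities do close.

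The gap is exactly where you suspect it: the case $d_2\ge 1$ of (B), which is the bulk of the cited paper, is only a plan. Prescribing target red degrees $\rho_i$ that are injective on each level $\{i: d_i=t\}$ and then ``realizing'' them by a red clique-subgraph together with the $Y$-edges is a nontrivial degree-sequence realization problem with side constraints (the cross-level repairs you propose change the very degrees you are trying to realize, and the $Y$-vertices impose their own conditions when they have several neighbours in $X$); nothing in your sketch shows such a system is always solvable, and the delicate regimes you flag (only two or three positive $d_i$, e.g.\ $d_1=d_2=1$, $n\ge 10$) are precisely where an existence argument is needed rather than a heuristic about ``two units of extra freedom.'' Separately, your justification of $\operatorname{lir}(G)\le 3$ --- needed for the forward direction of the second bullet --- is either circular (Conjecture~\ref{graph3} for split graphs \emph{is} the theorem of~\cite{Lintzmayer Mota Sambinelli} you are trying to reprove) or deferred to an unspecified repair with a third colour. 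As it stands the proposal proves the second bullet's ``$\Leftarrow$'' only up to $\operatorname{lir}(G)\ge 3$, and the first bullet's ``$\Leftarrow$'' only when $d_2=0$.
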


\begin{tw}[\cite{Lintzmayer Mota Sambinelli}]\label{split2}
Let $G(X,Y)$ be a split graph with $X = \{v_1, \ldots, v_n\}$ where $d_1 \geq \ldots \geq d_n$. If $n \leq 9$, then the following holds
\begin{itemize}
\item $G$ is not decomposable if $G$ is isomorphic to $K_2$, $K_3$ or $P_4$;
\item If $G$ is decomposable, then
\begin{itemize}
\item If $d_1 > d_2 > \ldots > d_n$, then $\operatorname{lir}(G) = 1$;
\item  If $3 \leq n \leq 9$ and $\sum^n_{i=1} d_i \geq \lfloor \frac{n}{2} \rfloor$, then $\operatorname{lir}(G) = 2$;
\item If $8 \leq n \leq 9$, $\sum^3_{i=1} d_i=3$, and $d_2 \geq 1$, then $\operatorname{lir}(G) = 2$;
\item  If $n = 9$ and $d_1 = d_2 = 1$, then $\operatorname{lir}(G) = 2$;
\item For all the other cases, it follows that $\operatorname{lir}(G) = 3$. 
\end{itemize}
\end{itemize}
\end{tw}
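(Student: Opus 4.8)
The plan is to prove the characterisation by treating colourability structurally and then running a bounded case analysis, since $n\le 9$ leaves only finitely many admissible degree sequences $(d_1,\dots,d_n)$. Throughout I fix the split partition $\{X,Y\}$ with $|X|$ as large as possible, so that the nonincreasing sequence $d_1\ge\cdots\ge d_n$ is canonically defined; a useful consequence of maximality is that no vertex of $Y$ can be adjacent to all of $X$, hence $\deg y\le n-1$ for each $y\in Y$ while every clique neighbour $v_i$ of $y$ has $\deg v_i=n-1+d_i\ge n$.

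First I would list the non-decomposable split graphs. By the characterisation of uncolourable graphs in \cite{Baudon Bensmail Przybylo Wozniak}, a connected uncolourable graph is a subcubic cactus; but a split graph with $|X|=n$ contains $n$ pairwise adjacent vertices of degree at least $n-1$, so subcubicity forces $n\le 4$, and $n=4$ already forces $G=K_4$, which is not a cactus. Examining the few remaining possibilities — a triangle, or a single edge, decorated with pendant edges — one checks directly that the only uncolourable split graphs are $K_2$, $K_3$ and $P_4$.

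For decomposable $G$ one then observes that $\operatorname{lir}(G)=1$ exactly when $G$ is itself locally irregular, which, as the clique vertices are pairwise adjacent, happens iff their degrees $n-1+d_i$ are pairwise distinct, i.e. iff $d_1>\cdots>d_n$ (and the remark above shows no vertex of $Y$ spoils this). It remains to distinguish $\operatorname{lir}(G)=2$ from $\operatorname{lir}(G)=3$ when $d_1\ge\cdots\ge d_n$ is not strictly decreasing. The key is a lower-bound obstruction: in any locally irregular $2$-colouring the red degrees of the (pairwise adjacent) clique vertices are $n$ distinct integers, the $i$-th of them lying in $[0,\,n-1+d_i]$, and when $\sum d_i$ is small these intervals are too short to hold $n$ distinct values without forcing some clique vertex to be incident only to red edges and another only to blue edges — whereupon the edge between them cannot be coloured. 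This is precisely the mechanism that yields $\operatorname{lir}(K_m)\ge 3$ for $m\ge 4$. Making the count exact recovers the stated thresholds: $\sum_{i} d_i\ge\lfloor n/2\rfloor$ is enough to defeat the obstruction, and for $n\in\{8,9\}$ the two further shapes $\sum_{i=1}^3 d_i=3$ with $d_2\ge 1$, and $n=9$ with $d_1=d_2=1$, still leave just enough room; in each of these cases one writes down an explicit red/blue colouring in which the clique edges give the clique vertices pairwise distinct red degrees while the edges to $Y$ are used to remove the few residual collisions and to make every vertex of $Y$ locally irregular in its colour classes. For every remaining degree sequence the obstruction rules out two colours, while three colours suffice (this is the easy direction, subsumed by the proof that Conjecture~\ref{graph3} holds for split graphs), so $\operatorname{lir}(G)=3$ there.

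The hard part is the penultimate step — sharpening the counting obstruction so that it matches $\lfloor n/2\rfloor$ exactly, and producing the $2$-colourings in the two borderline families with $n\in\{8,9\}$, where the edges to $Y$ are only barely numerous enough. The remaining ingredients (the uncolourability list, the $\operatorname{lir}=1$ criterion, and the upper bound $\operatorname{lir}(G)\le 3$) are routine; it is exactly this delicate $2$-versus-$3$ boundary for split graphs with very few edges to $Y$ that is responsible for the somewhat elaborate statement of the theorem.
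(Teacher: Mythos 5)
This statement is quoted from Lintzmayer, Mota, and Sambinelli~\cite{Lintzmayer Mota Sambinelli}; the present paper gives no proof of it, so there is no internal argument to compare yours against. Your proposal therefore has to stand on its own, and it does not: everything that actually distinguishes $\operatorname{lir}(G)=2$ from $\operatorname{lir}(G)=3$ --- the exact matching of the obstruction to the threshold $\lfloor n/2\rfloor$, and the explicit $2$-colourings for the borderline families with $n\in\{8,9\}$ --- is deferred in your own words as ``the hard part.'' What remains is the uncolourability list and the $\operatorname{lir}=1$ criterion, which are indeed routine. So this is an outline, not a proof.

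More seriously, the one substantive idea you do commit to is wrong. You claim that in any locally irregular $2$-colouring ``the red degrees of the (pairwise adjacent) clique vertices are $n$ distinct integers.'' Local irregularity only forces the endpoints of a \emph{red} edge to have distinct red degrees; two clique vertices joined by a blue edge are free to share a red degree (they must then differ in blue degree). So the red degrees of the clique vertices need not be pairwise distinct, and the interval-counting mechanism built on that premise collapses. Even granting distinctness for the sake of argument, each interval $[0,\,n-1+d_i]$ contains $n+d_i\geq n$ integers, so it is never ``too short to hold $n$ distinct values''; the arithmetic as described cannot produce the threshold $\lfloor n/2\rfloor$, nor the exceptional shapes $\sum_{i=1}^{3}d_i=3$ with $d_2\geq 1$ and $n=9$ with $d_1=d_2=1$. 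A correct lower-bound argument (as in~\cite{Lintzmayer Mota Sambinelli}) has to track both colour degrees simultaneously and, for $n\leq 9$, ultimately reduces to a finite but genuine case analysis together with explicit colourings; none of that is present here.
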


\begin{tw}
If $G$ is a split graph different from $K_2$ then $\operatorname{lir}(^2G)\leq 2$.
\end{tw}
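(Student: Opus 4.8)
The plan is to reduce, by Theorems~\ref{split1} and~\ref{split2}, to a very short list of graphs and then handle each by a direct construction. First we may assume $\operatorname{lir}(G)=3$: if $\operatorname{lir}(G)\le 2$ then $\operatorname{lir}(^2G)\le 2$ by doubling each edge in its colour (as noted in the introduction), and if $G$ is uncolorable then $G$ is a subcubic cactus, so $\operatorname{lir}(^2G)\le 2$ by Theorem~\ref{cycle}. Fix a maximum clique $X$ of $G$, put $n=|X|$ and let $Y=V(G)\setminus X$ (independent). Then Theorems~\ref{split1} and~\ref{split2} say that, apart from the cases already handled, $G$ is one of: a complete graph $K_n$ with $n\ge3$; a star or a double star (when $n\le2$, since by maximality no vertex of $Y$ is adjacent to two vertices of $X$); the graph obtained from $K_n$ by attaching $k$ pendant vertices to a single clique vertex $v_1$ with $1\le k<\lfloor n/2\rfloor$ and $n\ge10$; or one of finitely many graphs with $3\le n\le9$, each being $K_n$ with at most $\lfloor n/2\rfloor-1$ edges added to an independent set of at most $\lfloor n/2\rfloor-1$ vertices. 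Complete graphs, stars and double stars are covered by Theorem~\ref{cycle}.

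For the principal remaining case, $G=K_n$ with pendants $y_1,\dots,y_k$ at $v_1$, I would colour $^2G$ as follows. Take a $2$-edge colouring of $^2K_n$ whose red degrees $R_1,\dots,R_n$ are pairwise distinct; such a colouring exists for $n\ge3$ (prescribe and realize a suitable distinct-valued degree sequence in $\{0,1,\dots,2(n-1)\}$ with even sum as a submultigraph of $^2K_n$), and for $n\ge4$ one can moreover arrange $R_1=0$, or $R_1=2$ realized by a single red--red clique edge $v_1v_2$ with $R_2\ne4$. Colour every pendant multiedge $v_1y_j$ red--red. Then each $y_j$ has red degree $2$ and blue degree $0$, hence is isolated in blue; every clique vertex $v_i$ has blue degree $2(n-1)-R_i$, so the blue degrees of $v_1,\dots,v_n$ remain pairwise distinct; and the red degree of $v_1$ becomes $R_1+2k$, the others unchanged. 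Thus the blue class is locally irregular, and for the red class the only conditions left to check are $R_1+2k\ne2$ and $R_1+2k\ne R_i$ whenever $v_1v_i$ bears a red edge. If $k\ne1$ take $R_1=0$, so $v_1$ has no red clique edge and $R_1+2k=2k\ne2$; if $k=1$ take $R_1=2$ with the edge $v_1v_2$ and $R_2\ne4$, so $R_1+2k=4$. Finally, since the clique red degrees are distinct and the red degree of $v_1$ is at least $4$, no colour class has a component equal to $K_2$ or to a doubled edge. Hence $\operatorname{lir}(^2G)\le2$, which also settles Conjecture~\ref{main} for this $G$.

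The finitely many graphs with $3\le n\le9$ are handled by the same idea: colour the few edges incident with $Y$ so that each vertex of $Y$ becomes isolated in the blue class, and colour $^2K_n$ so that, after adding these contributions, the clique red degrees stay pairwise distinct and avoid the red degrees of the vertices of $Y$; the cases in which a vertex of $Y$ is adjacent to two clique vertices rather than being a genuine pendant are treated analogously. The main obstacle is exactly this: because the edges to $Y$ shift the degrees of a few clique vertices in one colour only, one must control the red and the blue degree collisions simultaneously, and for the small split graphs (and for realizing the prescribed value of $R_1$ in $^2K_n$ while keeping all $R_i$ distinct) this calls for a careful, though routine, case-by-case check; the reduction and the principal construction are comparatively short.
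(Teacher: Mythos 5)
Your reduction via Theorems~\ref{split1} and~\ref{split2} to the cases $d_2=0$ (pendant vertices attached to a single clique vertex) plus finitely many sporadic graphs is sound and matches the paper's. The gap lies in the construction for the main case. Everything rests on the claim that $^2K_n$ admits a $2$-edge-colouring whose red degrees $R_1,\dots,R_n$ are \emph{pairwise distinct}, with $R_1$ taking a prescribed value ($0$, or $2$ realized by a single red--red clique edge $v_1v_2$ with $R_2\neq 4$). Pairwise distinctness of all $R_i$ really is needed in your set-up (two clique vertices with equal red degree have equal blue degree, and they are adjacent in at least one of the two colours), and it is strictly stronger than the local irregularity that Theorem~\ref{cycle} supplies. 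You justify it only by the parenthetical ``prescribe and realize a suitable distinct-valued degree sequence''; but realizing $n$ distinct values with even sum as a degree sequence of a submultigraph of $^2K_n$, with the extra constraints at $v_1$, is essentially an irregularity-strength statement for $K_n$ with side conditions --- it is plausible, but it is the entire technical content of your construction and is not proved. The sporadic cases $3\le n\le 9$, including the $d_1=d_2=1$ graphs in which one vertex of $Y$ may be adjacent to two clique vertices, are likewise deferred wholesale to an ``analogous'' and unspecified check.

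The missing idea that makes all of this unnecessary: you do not need all colour degrees distinct, only an extremal one. Take \emph{any} locally irregular $2$-colouring of $^2K_n$ from Theorem~\ref{cycle}, let $v_1$ be a clique vertex of maximum blue degree (possible by vertex-transitivity of $K_n$), and colour every pendant multiedge at $v_1$ blue--blue. The blue degree of $v_1$ only increases, so it remains distinct from the unchanged blue degrees of its clique neighbours and exceeds $2$, the blue degree of the pendant vertices; no red degree changes. This is the paper's one-line argument. For the $d_1=d_2=1$ cases the paper uses the vertex of maximum blue degree and the (necessarily different) vertex of maximum red degree, colouring their two pendant multiedges blue--blue and red--red respectively, which also disposes of the situation where the two pendant neighbours coincide.
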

\begin{proof}
Let $G(X,Y)$ be a split graph with $X = \{v_1, \ldots, v_n\}$ where $d_1 \geq \ldots \geq d_n$. 
Without loss of generality, we may assume that the set of vertices $X$ which induces a clique is the maximum possible. 
If $n\leq2$ then $G$ is a tree, hence bipartite, and the result follows from Theorem \ref{cycle}. 

Next, assume that $n>2$. 
Due to Theorem~\ref{split1} and Theorem~\ref{split2}, only the following particular cases need to be considered:
\begin{itemize}
\item $d_1 < \lfloor \frac{n}{2} \rfloor$ and $d_2 = 0$, 
\item $d_1=d_2=1$ and $n\in \{6,7,8\}$;
\end{itemize}
these are the only split graphs with $n > 2$ and the locally irregular chromatic index equal to three.
For every other split graph $G$ with $n>2$, we have from Theorem~\ref{split1} and Theorem~\ref{split2} that $\operatorname{lir}(G)\leq 2$, and thus, $\operatorname{lir}(^2G) \leq 2$.

Consider the case when $d_1 < \lfloor \frac{n}{2} \rfloor$ and $d_2 = 0$.
From Theorem~\ref{cycle} we get a locally irregular 2-coloring of the complete subgraph of $G$ induced on $X$. 
Without loss of generality suppose that the blue degree of $v_1$ is the maximum possible. 
It is easy to see that the coloring of the remaining multiedges (pendant multiedges incident to $v_1$) blue-blue results in a locally irregular 2-coloring of $^2G$.

Now, we consider the case when $d_1=d_2=1$ and $n\in \{6,7,8\}$. 
Like in the previous case, from Theorem~\ref{cycle} we get a locally irregular 2-coloring of the 2-multigraph obtained from the complete subgraph of $G$ induced on $X$.
Note that the vertex with the largest blue degree is different from the vertex with the largest red degree.
Otherwise, all vertices would have the same red and blue degrees, which contradicts that the coloring is locally irregular.
Suppose that $v_1$ and $v_2$ have the largest blue and red degrees, respectively. Let $w_1$ be a neighbor of $v_1$ outside of the clique and $w_2$ be a neighbor of $v_2$ outside of the clique.
To finish the coloring, color the multiedge $v_1w_1$ blue-blue and  the multiedge $v_2w_2$ red-red.
It is easy to see that no conflict was created despite the possibility that $w_1=w_2$.
Hence, in this case, we also get $\operatorname{lir}(^2G) \leq 2$.
\end{proof}

\section{Planar graphs}\label{planar_section}

In~\cite{Bensmail Dross Nisse}, Bensmail, Dross, and Nisse proved that the locally irregular index of every colorable simple planar graph $G$ is at most 15 (14 if $|E(G)|$ is even). 
Naturally, this gives a bound of 15 on $\operatorname{lir}(^2G)$ for a simple planar colorable graph $G$. 
In this section, we significantly lower this bound.

\begin{tw}
If $G \neq K_2$ is a simple connected planar graph then $\operatorname{lir}(^2G)\leq 4$.
\end{tw}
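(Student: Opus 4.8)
The plan is to exploit the identity $4=2+2$. I would split the colours into the pairs $\{1,2\}$ and $\{3,4\}$, decompose the edge set of $G$ into two bipartite subgraphs $G_1=(V(G),E_1)$ and $G_2=(V(G),E_2)$, and then colour both parallel copies of every edge of $G_1$ with colours from $\{1,2\}$ and both copies of every edge of $G_2$ with colours from $\{3,4\}$. Such a decomposition exists because $G$ is planar, hence $4$-colourable by the Four Colour Theorem: if $c\colon V(G)\to\mathbb{Z}_2\times\mathbb{Z}_2$ is a proper colouring, put into $E_1$ the edges whose ends differ in the first coordinate and into $E_2$ those whose ends differ only in the second coordinate; then $G_1$ is bipartite with sides $\{x:x_1=0\}$ and $\{x:x_1=1\}$, and $G_2$ is bipartite with sides $\{x:x_2=0\}$ and $\{x:x_2=1\}$.

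Given the decomposition, it is enough to find a locally irregular $2$-colouring of the $2$-multigraph over $G_1$ using colours $1,2$ and, independently, one of the $2$-multigraph over $G_2$ using colours $3,4$; since no colour is shared between the two pieces, the colour classes do not interact, so their union is a locally irregular $4$-colouring of $^2G$. For the two $2$-colourings I would quote Theorem~\ref{cycle}, by which Conjecture~\ref{main} holds for bipartite graphs: a locally irregular $2$-colouring of $^2H$ exists for every connected bipartite $H\neq K_2$, hence, taking components independently, for every bipartite graph all of whose components have at least two edges (isolated vertices being harmless).

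The only thing that can go wrong — and the step I expect to be the real obstacle — is a component of $G_1$ or $G_2$ equal to a single edge $uv$, since $^2K_2$ is uncolourable. I would deal with this by first clearing out the cases in which $G$ itself lies in a class already covered by Theorem~\ref{cycle} (bipartite graphs, complete graphs, cacti, and so on, which in particular disposes of all small graphs such as paths of length at most two and the triangle — the triangle being an unavoidable bad case for the naive decomposition), and then, for the remaining planar $G$, repairing the decomposition by local rerouting. The key facts available for the repair are that $G$ is connected and different from $K_2$, so an endpoint of a would-be $K_2$-component always carries another edge that can be shifted between $E_1$ and $E_2$, and that shifting a stray edge into a spanning tree used as one of the pieces turns that tree into a unicyclic graph, which is still a cactus and hence still covered by Theorem~\ref{cycle}. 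Checking that finitely many such moves suffice without re-creating the defect elsewhere is the technical heart of the argument; the rest — the decomposition from the Four Colour Theorem and the per-piece colourings — is routine.
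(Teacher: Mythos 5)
Your overall strategy is the same as the paper's: use the Four Colour Theorem to split $E(G)$ into two bipartite spanning subgraphs $G_1$ and $G_2$, apply Theorem~\ref{cycle} to $^2G_1$ and $^2G_2$ with disjoint pairs of colours, and take the union. Up to the labelling of colour classes, your $\mathbb{Z}_2\times\mathbb{Z}_2$ decomposition is the same as the one the paper constructs. The problem is that you correctly identify the genuine obstacle --- a component of $G_1$ or $G_2$ isomorphic to $K_2$, for which $^2K_2$ is uncolourable --- and then do not actually overcome it. Your ``repair by local rerouting'' is only a sketch, and the sketch as stated does not work: moving an edge from $E_1$ to $E_2$ (or back) can create an odd cycle in the receiving part and destroy its bipartiteness, which is exactly the property your two-colour-per-part plan depends on; and you give no argument that the process terminates without creating new isolated edges elsewhere. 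The aside about ``shifting a stray edge into a spanning tree used as one of the pieces'' describes a different decomposition from the one you set up (neither $G_1$ nor $G_2$ is a spanning tree), and a unicyclic graph, while a cactus, need not remain bipartite after the shift if you later want to reuse the two-colour bound for bipartite graphs. So the ``technical heart of the argument,'' in your own words, is missing.

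The paper closes precisely this gap by quoting Lemma~\ref{lemma_nice} (Lemma 5.8 of Baudon, Bensmail, Davot, Hocquard et al.): a graph without isolated edges and isolated triangles that admits a red/blue edge-colouring with $\chi(G_R)=r$ and $\chi(G_B)=b$, $r,b\ge 2$, admits one with $\chi(G_R)\le r$, $\chi(G_B)\le b$, \emph{and} no isolated edges in either part. Applied to your decomposition (both parts have chromatic number $2$ once each part contains at least one edge), this yields two bipartite parts without isolated edges, after which Theorem~\ref{cycle} finishes the proof exactly as you intend. Without this lemma or an equivalent substitute worked out in detail, your argument is incomplete.
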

\begin{proof}
From the Four-Color Theorem, we have that $\chi(G) \leq 4$.
Observe that, if $\chi(G) \leq 2$, i.e., $G$ is bipartite, the result follows from Theorem~\ref{cycle}.
Hence, in the following, we may assume that $\chi(G) \in \{3,4\}$.
Let $V_1,\dots, V_{\chi(G)}$ be the color classes of the proper vertex $\chi(G)$-coloring of $G$.


If $\chi(G) = 3$, denote by $G_1$ the spanning subgraph of $G$ with $E(G_1) = \{uv \colon u \in V_1\}$, and denote by $G_2$ the spanning subgraph of $G$ with $E(G_2) = E(G) \setminus E(G_1)$.
If $\chi(G) = 4$, let $G_1$ be the spanning subgraph of $G$ with $E(G_1) = \{uv \colon (u \in V_1 \cup V_3 \land v \in V_2) \lor (u \in V_3 \land v \in V_4)\}$, and let $G_2$ be the spanning subgraph of $G$ with $E(G_2) = E(G) \setminus E(G_1)$.
Clearly, $G_1$ and $G_2$ are bipartite graphs. 
Note also that, if $i,j \in \{1, \dots, \chi(G)\}$ and $i \neq j$ then there is at least one edge joining a vertex from $V_i$ with a vertex from $V_j$.
Hence, $\chi(G_1) = 2$ and $\chi(G_2) = 2$.

It might happen, however, that $G_1$ or $G_2$ contains an isolated edge, and thus, Theorem~\ref{cycle} cannot be applied to it. 
To avoid these problems, we use the following lemma proved by Baudon et al. (see Lemma 5.8. in~\cite{Baudon Bensmail Davot Hocquard}):
\begin{lem}[\cite{Baudon Bensmail Davot Hocquard}]\label{lemma_nice}
Assume that a graph $G$ without isolated edges and isolated triangles can be $2$-edge-colored with red and blue so that the induced red subgraph $G_R$ and the induced blue subgraph $G_B$ 
satisfy $\chi(G_R) = r$ and $\chi(G_B) = b$ where $r, b \geq 2$. 
Then $G$ can be $2$-edge-colored in such a way that $\chi(G_R) \leq r$, $\chi(G_B) \leq b$, and $G_R$ and $G_B$ are without isolated edges.
\end{lem}

From Lemma~\ref{lemma_nice}, we get a decomposition of $G$ into two bipartite graphs $G_1$ and $G_2$ without isolated edges. 
Hence, by Theorem~\ref{cycle}, both $^2G_1$ and $^2G_2$ admit locally irregular edge colorings using two colors; let those colors be red and blue in the case of $^2G_1$, and green and yellow in the case of $^2G_2$.
Subgraphs of $^2G_1$ and $^2G_2$ induced on edges of the same color form a decomposition of $^2G$ into four locally irregular submultigraphs.
Thus, $\operatorname{lir}(^2G) \leq 4$.      
\end{proof}

As an immediate consequence of the proof of the above theorem, we get the following:
\begin{cor}
For every connected tripartite or $4$-partite graph $G$, the multigraph $^2G$ satisfies $\operatorname{lir}(^2G)\leq 4$.
\end{cor}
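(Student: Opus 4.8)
The corollary asks to show that for every connected tripartite or $4$-partite graph $G$, we have $\operatorname{lir}({}^2G)\leq 4$.

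The plan is to observe that the proof of the preceding theorem never actually uses planarity beyond invoking the Four-Color Theorem to guarantee $\chi(G)\leq 4$. So I would simply replay that argument under the weaker hypothesis that $G$ is tripartite or $4$-partite, which by definition means $\chi(G)\leq 4$ (a tripartite graph has $\chi(G)\leq 3$, a $4$-partite graph has $\chi(G)\leq 4$). First, if $\chi(G)\leq 2$ then $G$ is bipartite and the bound follows directly from Theorem~\ref{cycle}, so assume $\chi(G)\in\{3,4\}$. Fix a proper vertex coloring with classes $V_1,\dots,V_{\chi(G)}$, and split $G$ into two spanning subgraphs $G_1$ and $G_2$ exactly as in the theorem's proof: for $\chi(G)=3$ take $E(G_1)=\{uv:u\in V_1\}$ and $E(G_2)=E(G)\setminus E(G_1)$; for $\chi(G)=4$ take $E(G_1)=\{uv:(u\in V_1\cup V_3\wedge v\in V_2)\vee(u\in V_3\wedge v\in V_4)\}$ and $E(G_2)=E(G)\setminus E(G_1)$. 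Both $G_1,G_2$ are bipartite.

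Next I would apply Lemma~\ref{lemma_nice} to this $2$-edge-coloring of $G$ (with $r=b=2$) to obtain a decomposition of $G$ into two bipartite subgraphs $G_1,G_2$ each without isolated edges; note $G$ itself is connected with at least one edge, and the only subtlety — isolated triangles — does not obstruct applying the lemma since we only need its conclusion about isolated edges, and a connected $G$ other than $K_2$ has no isolated edge to begin with (if $G=K_2$ it is trivially bipartite and excluded by the two-color case). Then by Theorem~\ref{cycle}, both ${}^2G_1$ and ${}^2G_2$ admit locally irregular $2$-colorings; use colors red, blue for ${}^2G_1$ and green, yellow for ${}^2G_2$. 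The four color classes partition $E({}^2G)$ into four locally irregular submultigraphs, so $\operatorname{lir}({}^2G)\leq 4$.

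I do not expect a genuine obstacle here: the corollary is deliberately stated as an immediate consequence, and the only thing to be careful about is the degenerate cases (whether $G$ could be $K_2$ or whether $G_1$, $G_2$ might fail the hypotheses of Lemma~\ref{lemma_nice}), all of which are handled by first disposing of the bipartite case. The one point worth a sentence is confirming that ``tripartite or $4$-partite'' is exactly the hypothesis ``$\chi(G)\leq 4$'' that the theorem's proof used via the Four-Color Theorem, so the argument transfers verbatim. Indeed, the cleanest writeup is simply: \emph{the proof is identical to that of the previous theorem, replacing the appeal to the Four-Color Theorem by the hypothesis that $G$ is tripartite or $4$-partite.}
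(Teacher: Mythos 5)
Your proposal is correct and matches the paper exactly: the paper gives no separate proof, stating the corollary as an immediate consequence of the theorem's proof, whose only use of planarity is to obtain $\chi(G)\leq 4$ via the Four-Color Theorem — a bound that holds by definition for tripartite and $4$-partite graphs. The rest of your argument (the two bipartite spanning subgraphs, Lemma~\ref{lemma_nice}, and Theorem~\ref{cycle}) is verbatim the paper's reasoning, so nothing further is needed.
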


\section{Subcubic graphs}\label{subcubic_section}

In this section, we present a new constant upper bound on $\operatorname{lir}(^2G)$ in the case when $G$ is a connected simple cubic graph different from $K_2$.
This new upper bound for subcubic graphs greatly improves the bound of 76 obtained from the result of Grzelec and Woźniak~\cite{Grzelec wozniak} for general 2-multigraphs. Moreover, we prove Conjecture~\ref{main} for two particular families of subcubic graphs. 

We start by introducing a few definitions and results that are useful to prove the above-mentioned new results.
Let $K''_{1,3}$ be the star $K_{1,3}$ with two edges subdivided once. 
An edge-decomposition of a connected graph is called \textit{pertinent} if it consists of paths of length two and at most one element isomorphic either to $K_{1,3}$  or $K''_{1,3}$. 
An edge-decomposition $\mathcal{D}$ of a graph is \textit{strongly pertinent} if it is pertinent, and when $\mathcal{D}$  contains an element isomorphic to $K''_{1,3}$, the graph has no pertinent edge-decomposition without $K''_{1,3}$. 
Naturally, two elements of a pertinent decomposition are \textit{adjacent}, if they share a common vertex.

In \cite{Bensmail Merker Thomassen} Bensmail,  Merker, and Thomassen showed that every simple colorable graph has a pertinent edge-decomposition; 
using this result Lužar, Przybyło, and Soták proved the following:
\begin{tw}[\cite{Luzar Przybylo Sotak}]\label{subcubic graphs}
Let $G$ be a simple colorable subcubic graph and $\mathcal{D}$ be a strongly pertinent edge-decomposition of $G$. Then, $G$ admits a locally irregular coloring with at most four colors such that
\begin{itemize}
\item the edges of every element of $\mathcal{D}$ are colored with the same color; and
\item if the edges of two adjacent elements $p_1$, $p_2$ of $\mathcal{D}$ are colored with the same color, then their common vertex is the central vertex of either $p_1$ or $p_2$.
\end{itemize}
\end{tw}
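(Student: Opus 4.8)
The plan is to turn the statement into a colouring problem for the blocks of $\mathcal{D}$. As an abstract graph every block is already locally irregular: $P_3$ has degree sequence $1,2,1$, the star $K_{1,3}$ has degree sequence $3,1,1,1$, and $K''_{1,3}$ has degree sequence $3,2,2,1,1,1$. Colouring every block monochromatically therefore makes each colour class $H_c$ an edge-disjoint union of locally irregular pieces, and the whole task is to choose the block colouring so that every $H_c$ is globally locally irregular and the second bullet holds. First I would pin down exactly when local irregularity of $H_c$ can fail. Since $G$ is subcubic, $\deg_{H_c}(v)\le 3$ for every $v$, and a short case check shows that, once the block colouring satisfies the second bullet --- equivalently, for every vertex and every colour at most one block of that colour has that vertex as a non-central vertex --- no two adjacent vertices of $H_c$ can both have degree $1$ and none can both have degree $2$. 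Hence the \emph{only} remaining obstruction is an edge of $G$ both of whose ends have degree $3$ in the same $H_c$; and a vertex of degree $3$ in $H_c$ is either the centre of the (at most one) $K_{1,3}$- or $K''_{1,3}$-block of colour $c$, or the centre of a $P_3$ of colour $c$ which is simultaneously a non-central vertex of a second block of colour $c$.

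For the bulk of the colouring I would use the auxiliary graph $\mathcal{A}$ on vertex set $\mathcal{D}$ in which two blocks are joined exactly when they share a vertex that is a non-central vertex of each. By the previous paragraph a proper colouring of $\mathcal{A}$ is precisely a block colouring obeying the second bullet and automatically free of degree-$1$ and degree-$2$ conflicts, so it suffices to $4$-colour $\mathcal{A}$. Restricted to the $P_3$-blocks one has $\Delta(\mathcal{A})\le 4$ by an incidence count (each of the two ends of a $P_3$ contributes at most two neighbours in $\mathcal{A}$, its centre none), and $\mathcal{A}$ has no $K_5$-component: five pairwise adjacent $P_3$'s would need all $\binom{5}{2}=10$ of their mutual adjacencies witnessed at vertices $v$ at which $k_v$ of the five paths are non-central, and since $\sum_v k_v=10$ with each $k_v\le 3$ one gets $\sum_v\binom{k_v}{2}\le 9<10$, a contradiction; so Brooks' theorem yields a $4$-colouring. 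The at most one non-path block raises a single vertex of $\mathcal{A}$ to degree roughly $8$; I would absorb it by colouring that block first, or by a local Kempe exchange afterwards, using that its $\mathcal{A}$-neighbourhood --- spread over the block's leaves --- is sparse.

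It then remains to destroy the degree-$3$ conflicts, which a proper $\mathcal{A}$-colouring does not by itself exclude. Such conflicts live along ``bad chains'' --- sequences of $P_3$'s in which consecutive paths meet at a vertex that is the centre of one and an endpoint of the other, all carrying the same colour --- together with the potential conflicts around the centre of the unique non-path block. I would remove them by recolouring along these chains, which are path-like auxiliary structures and so leave ample freedom with four colours at hand, and by fixing the colour of the special block last so as to dodge its boundedly many degree-$3$ neighbours. Here I expect the hypothesis that $\mathcal{D}$ is \emph{strongly} pertinent to be essential: it rules out a $K''_{1,3}$-block whenever a $K''_{1,3}$-free pertinent decomposition exists, and in the remaining forced cases the subdivided structure of the $K''_{1,3}$ --- in particular its two degree-two vertices --- is exactly what allows the last adjustments near its centre.

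The main obstacle is precisely this squeeze down to four colours: the crude conflict structure has maximum degree five or six, so just properly colouring away all conflicts costs five or six colours, and the entire content of the theorem is in (i) showing --- via Brooks together with the $K_5$-freeness argument, or alternatively via a well-chosen processing order of $\mathcal{D}$ --- that the degree-$1$/degree-$2$ part costs only four colours, and (ii) checking that the residual degree-$3$ conflicts, including those produced by the single $K_{1,3}$- or $K''_{1,3}$-block, can always be repaired without breaking the second bullet. What is left is a long but routine enumeration of the local configurations a subcubic graph can exhibit.
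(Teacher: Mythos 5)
This statement is Theorem~\ref{subcubic graphs}, which the paper imports from Lu\v zar, Przyby\l o and Sot\'ak \cite{Luzar Przybylo Sotak} and does not prove; there is therefore no in-paper argument to compare against, and your proposal has to be judged on its own. Your set-up is sound and matches the natural reading of the statement: colouring each element of $\mathcal{D}$ monochromatically, the second bullet is exactly properness of your auxiliary graph $\mathcal{A}$, and your case analysis correctly shows that under $\mathcal{A}$-properness the only possible violation of local irregularity in a colour class is an edge whose two ends both have degree three there (the degree-$1$/degree-$1$ and degree-$2$/degree-$2$ cases are indeed impossible inside a single block, and $\mathcal{A}$-properness forces all edges of a colour at a degree-$2$ vertex into one block). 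The bound $\Delta(\mathcal{A})\le 4$ on the $P_3$-part and the counting argument excluding $K_5$ are also correct, so Brooks does give a proper $4$-colouring of that part of $\mathcal{A}$.

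The genuine gap is the last step, which is where the whole content of the theorem sits. A proper $4$-colouring of $\mathcal{A}$ says nothing about the degree-$3$ conflicts: these arise from chains $B'\!-\!B\!-\!B''$ of same-coloured elements in which the shared vertices alternate central/non-central roles, and eliminating them is an \emph{additional} constraint that must be satisfied simultaneously with $\mathcal{A}$-properness using the same four colours. Your proposal handles this by asserting that one can ``recolour along bad chains'' with ``ample freedom'', but a block can already have four $\mathcal{A}$-neighbours, so a greedy or Kempe-style repair has no colours to spare a priori; the chains can branch and close into cycles, a recolouring that breaks one chain can create another or violate $\mathcal{A}$-properness, and nothing in the proposal controls this interaction. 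Likewise the single $K_{1,3}$- or $K''_{1,3}$-element (which forces a degree-$3$ centre in its colour class) and the precise use of \emph{strong} pertinence are only gestured at (``I would absorb it'', ``I expect the hypothesis to be essential''). Until a concrete processing order or invariant is exhibited that delivers both bullets and kills all adjacent degree-$3$ pairs with four colours, the argument is an accurate description of the difficulty rather than a proof of the theorem.
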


As an immediate consequence of the above result, we get that for every simple connected colorable subcubic graph $G$ which is not isomorphic to $K_2$, the multigraph $^2G$ satisfies $\operatorname{lir}(^2G)\leq 4$. 
We improve this corollary of Theorem~\ref{subcubic graphs} by proving the following theorem:
\begin{tw}
If $G \neq K_2$ is a simple connected subcubic graph then \linebreak $\operatorname{lir}(^2G)\leq 3$.
\end{tw}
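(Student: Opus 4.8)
The plan is to start from a strongly pertinent edge-decomposition $\mathcal{D}$ of $G$ (which exists by the result of Bensmail, Merker, and Thomassen, since every simple colorable graph has a pertinent edge-decomposition, and subcubic graphs without isolated edges are colorable by the characterization of uncolorable graphs as special subcubic cacti built from triangles). For each element $p \in \mathcal{D}$ we will distribute the colors \emph{within} its doubled copy in $^2G$, rather than giving all parallel edges of $p$ the same color as one does when simply lifting a coloring of $G$. The key idea is that a path of length two $P = xzy$ doubled becomes a multigraph on three vertices with multiedges $xz$ and $zy$; if we colour one parallel edge of $xz$ colour $a$ and the other colour $b$, and likewise split $zy$ into colours $a$ and $b$, then in colour $a$ the centre $z$ has degree $2$ while $x,y$ have degree $1$, and similarly for colour $b$, so each colour class restricted to $^2P$ is locally irregular with the centre carrying degree $2$. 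The same device works for $K_{1,3}$ and $K''_{1,3}$. Thus every element of $\mathcal{D}$ can be made to use \emph{two} colours internally in a way that keeps each colour locally irregular on that element, with the central vertex of the element always attaining the strictly larger degree in each colour it receives.

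Next I would organize the global colour assignment. By Theorem~\ref{subcubic graphs} there is a locally irregular colouring of $G$ with at most four colours, say $\{1,2,3,4\}$, where each element of $\mathcal{D}$ is monochromatic and adjacent equicoloured elements meet at a vertex central to one of them. The strategy is to pair up the four colours: treat $\{1,2\}$ as one ``macro-colour'' and $\{3,4\}$ as the other, and on an element $p$ that received colour $1$ or $2$ in the four-colouring, split its doubled copy between colours $1$ and $2$ using the device above (and symmetrically for $3,4$). Now a vertex $v$ of $G$ has, in $^2G$, a red-plus-blue degree (colours $1$ and $2$) equal to twice the number of edges of colour $1$ or $2$ at $v$ in $G$, and a green-plus-yellow degree equal to twice the number of colour $3$ or $4$ edges at $v$; but we need an \emph{honest} third colour, so in fact I would aim to use only colours $\{1,2,3\}$: recolour so that the fourth colour class of Theorem~\ref{subcubic graphs} — which in the subcubic case is used sparingly — can be absorbed. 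Concretely, the refined plan is to exploit that in a strongly pertinent decomposition of a subcubic graph each vertex lies in at most three elements, and two adjacent elements sharing a non-central vertex get distinct colours; this local structure lets each vertex see at most two ``macro-colours'' in a controlled pattern, so three colours on $^2G$ suffice after re-pairing.

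The heart of the argument is the local degree analysis at each vertex $v$ of $^2G$. Fix a colour $c \in \{1,2,3\}$. The edges of colour $c$ at $v$ come from the elements of $\mathcal{D}$ containing $v$, and the number contributed by an element $p$ is $0$, $1$, or $2$ according to whether $v$ is an endpoint or the centre of $p$ and which of the two internal colours $p$ uses. Because the four-colouring from Theorem~\ref{subcubic graphs} already separates adjacent elements that share a non-central vertex, and because in our internal splitting the centre of an element always gets the higher degree in each of its two colours, I would check the following: if $u$ and $v$ are adjacent in $G$ via a multiedge lying in element $p$, then the $c$-degrees $d_c(u)$ and $d_c(v)$ in $^2G$ differ, for every colour $c$ that appears on that multiedge. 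This splits into the case where $u$ (say) is the centre of $p$ — then $d_c(u) \ge 2$ from $p$ alone while $v$ gets only $1$ from $p$ and at most its contributions from other elements, which the decomposition structure bounds — and the case where neither is the centre, which cannot happen for a path of length two but must be ruled out for the $K_{1,3}$/$K''_{1,3}$ element using its uniqueness and the ``strongly pertinent'' hypothesis. The main obstacle I anticipate is exactly this bookkeeping: ensuring that collapsing four colours down to three never forces two adjacent vertices to collide in \emph{every} colour they share, particularly around the single non-path element and around vertices of degree $3$ where three elements meet. I expect this to require a careful case analysis on the possible local configurations of $\mathcal{D}$ at a vertex (how many elements, which are centred there), combined with a small amount of freedom — swapping the two internal colours of an element, or swapping which macro-colour an element belongs to — to break any residual conflict.
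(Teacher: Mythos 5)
Your proposal correctly identifies the starting point the paper also uses (a strongly pertinent decomposition together with Theorem~\ref{subcubic graphs}), but the mechanism you propose for getting from four colours down to three has a genuine gap, and in fact the central device actively breaks the guarantees you are relying on. The issue is your ``internal splitting'': if every element of $\mathcal{D}$ in a macro-class $\{1,2\}$ distributes its parallel edges between colours $1$ and $2$, then two adjacent elements that received \emph{different} colours $1$ and $2$ under $\varphi$ --- which Theorem~\ref{subcubic graphs} allows to meet at a vertex that is non-central for both --- now both contribute to the colour-$1$ degree and to the colour-$2$ degree of their common endpoint $v$. Concretely, if $v$ is a pendant vertex of two such paths and $z$ is the centre of one of them with $\deg_G(z)=2$, then after splitting both $v$ and $z$ have colour-$1$ degree $2$ and colour-$2$ degree $2$, so the multiedge $vz$ is in conflict in every colour it carries. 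The original $\varphi$ avoided exactly this by keeping those two elements in distinct colour classes; your re-pairing collapses that distinction. Moreover, even setting this aside, the proposal never actually says how colour $4$ is absorbed into $\{1,2,3\}$ --- the ``macro-colour'' pairing still uses four colours, and the final paragraph defers the reduction to an unspecified case analysis.

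For comparison, the paper's proof keeps the elements of colours $1$, $2$, $3$ fully monochromatic in $^2G$ (blue-blue, green-green, red-red), so all colour degrees arising from them are \emph{even} and the adjacency guarantees of $\varphi$ transfer verbatim; only the colour-$4$ multiedges are given mixed parallel colours from $\{$red, green, blue$\}$. Two further ingredients make this work and are absent from your plan: (i) choosing $\varphi$ to minimize the number of colour-$4$ edges, which (together with strong pertinence) forces every colour-$4$ monochromatic component to be a single $P_3$ or two adjacent $P_3$'s, and (ii) maintaining an invariant (pendant vertices of elements of colours $1$--$3$ never reach colour degree five) so that the odd colour degrees created by the mixed multiedges cannot collide with the degrees of neighbouring vertices. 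If you want to salvage your approach, you would need to abandon splitting the colour-$1,2,3$ elements and instead concentrate all the mixing on the fourth colour class, which is essentially the paper's route.
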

\begin{proof}
It follows from Theorem \ref{cycle} that $\operatorname{lir}(^2G) \leq 2$ for every cactus $G \neq K_2$.
Since all simple connected uncolorable graphs are cacti, we may, in the following, assume that $G$ is colorable.

From the result of Bensmail,  Merker, and Thomassen~\cite{Bensmail Merker Thomassen} we have a pertinent decomposition of $G$, hence, we have a strongly pertinent decomposition $\mathcal{D}$ of $G$.
Then, from Theorem~\ref{subcubic graphs} we have a locally irregular coloring $\varphi$ of $G$ using at most four colors with the additional properties.
Let the colors $\varphi$ uses be $1,2,3,4$.
Without loss of generality we assume that the number of edges of color 4 is the minimum possible and that $\varphi(p) = 1$ if $p \in \mathcal{D}$ is isomorphic to $K_{1,3}$ or $K_{1,3}''$.

In the following, we use $G[\varphi^{-1}(c)]$ to denote the subgraph of $G$ induced on edges of color $c$ in $\varphi$.

    \begin{figure}[h]
        \centering
        \begin{subfigure}{0.12\textwidth}
        \begin{center}
            \begin{tikzpicture}[scale=0.6]
	\begin{pgfonlayer}{nodelayer}
		\node [style={black_dot}] (0) at (-1, 0) {};
		\node [style={black_dot}] (1) at (0, 0) {};
		\node [style={black_dot}] (2) at (1, 0) {};
	\end{pgfonlayer}
	\begin{pgfonlayer}{edgelayer}
		\draw [style={normal_edge}] (0) to (1);
		\draw [style={normal_edge}] (1) to (2);
	\end{pgfonlayer}
\end{tikzpicture}
        \end{center}
            
            \caption{}
            \label{subfiga}
        \end{subfigure}
        \hfill
        \begin{subfigure}{0.12\textwidth}
            \begin{center}
                \begin{tikzpicture}[scale=0.6]
	\begin{pgfonlayer}{nodelayer}
		\node [style={black_dot}] (0) at (-1, 1) {};
		\node [style={black_dot}] (1) at (0, 1) {};
		\node [style={black_dot}] (2) at (1, 1) {};
		\node [style={black_dot}] (3) at (0, 0) {};
		\node [style={black_dot}] (4) at (0, -1) {};
	\end{pgfonlayer}
	\begin{pgfonlayer}{edgelayer}
		\draw [style={normal_edge}] (0) to (1);
		\draw [style={normal_edge}] (1) to (2);
		\draw [style={normal_edge}] (1) to (3);
		\draw [style={normal_edge}] (3) to (4);
	\end{pgfonlayer}
\end{tikzpicture}    
            \end{center}
            
            \caption{}
            \label{subfigb}
        \end{subfigure}
        \hfill
        \begin{subfigure}{0.12\textwidth}
            \begin{center}
                \begin{tikzpicture}[scale=0.6]
	\begin{pgfonlayer}{nodelayer}
		\node [style={black_dot}] (0) at (-1, 1) {};
		\node [style={black_dot}] (1) at (0, 1) {};
		\node [style={black_dot}] (2) at (1, 1) {};
		\node [style={black_dot}] (3) at (0, 0) {};
		\node [style={black_dot}] (4) at (0, -1) {};
		\node [style={black_dot}] (5) at (-1, -1) {};
		\node [style={black_dot}] (6) at (1, -1) {};
	\end{pgfonlayer}
	\begin{pgfonlayer}{edgelayer}
		\draw [style={normal_edge}] (0) to (1);
		\draw [style={normal_edge}] (1) to (2);
		\draw [style={normal_edge}] (1) to (3);
		\draw [style={normal_edge}] (3) to (4);
		\draw [style={normal_edge}] (5) to (4);
		\draw [style={normal_edge}] (4) to (6);
	\end{pgfonlayer}
\end{tikzpicture}    
            \end{center}
            
            \caption{}
            \label{subfigc}
        \end{subfigure}
        \hfill
        \begin{subfigure}{0.12\textwidth}
            \begin{center}
                \begin{tikzpicture}[scale=0.6]
	\begin{pgfonlayer}{nodelayer}
		\node [style={black_dot}] (0) at (0, 0) {};
		\node [style={black_dot}] (1) at (0, 1) {};
		\node [style={black_dot}] (2) at (-0.75, -0.75) {};
		\node [style={black_dot}] (3) at (0.75, -0.75) {};
	\end{pgfonlayer}
	\begin{pgfonlayer}{edgelayer}
		\draw [style={normal_edge}] (1) to (0);
		\draw [style={normal_edge}] (0) to (2);
		\draw [style={normal_edge}] (0) to (3);
	\end{pgfonlayer}
\end{tikzpicture}    
            \end{center}
            
            \caption{}
            \label{subfigd}
        \end{subfigure}
        \hfill
        \begin{subfigure}{0.12\textwidth}
            \begin{center}
                \begin{tikzpicture}[scale=0.6]
	\begin{pgfonlayer}{nodelayer}
		\node [style={black_dot}] (0) at (0, 0) {};
		\node [style={black_dot}] (1) at (0, 1) {};
		\node [style={black_dot}] (2) at (-0.75, -0.75) {};
		\node [style={black_dot}] (3) at (0.75, -0.75) {};
		\node [style={black_dot}] (4) at (-1.5, -1.5) {};
		\node [style={black_dot}] (5) at (1.5, -1.5) {};
	\end{pgfonlayer}
	\begin{pgfonlayer}{edgelayer}
		\draw [style={normal_edge}] (1) to (0);
		\draw [style={normal_edge}] (0) to (2);
		\draw [style={normal_edge}] (0) to (3);
		\draw [style={normal_edge}] (4) to (2);
		\draw [style={normal_edge}] (3) to (5);
	\end{pgfonlayer}
\end{tikzpicture}    
            \end{center}
            
            \caption{}
            \label{subfige}
        \end{subfigure}
        \hfill  
        \begin{subfigure}{0.18\textwidth}
            \begin{center}
                \begin{tikzpicture}[scale=0.6]
	\begin{pgfonlayer}{nodelayer}
		\node [style={black_dot}] (0) at (0, 0) {};
		\node [style={black_dot}] (1) at (0, 1) {};
		\node [style={black_dot}] (2) at (-0.75, -0.75) {};
		\node [style={black_dot}] (3) at (0.75, -0.75) {};
		\node [style={black_dot}] (4) at (-1.5, -1.5) {};
		\node [style={black_dot}] (5) at (1.5, -1.5) {};
		\node [style={black_dot}] (6) at (1, -2.5) {};
		\node [style={black_dot}] (7) at (2.5, -1) {};
	\end{pgfonlayer}
	\begin{pgfonlayer}{edgelayer}
		\draw [style={normal_edge}] (0) to (2);
		\draw [style={normal_edge}] (4) to (2);
		\draw [style={normal_edge}] (3) to (5);
		\draw [style={normal_edge}] (6) to (5);
		\draw [style={normal_edge}] (5) to (7);
		\draw [style={dotted_edge}] (1) to (0);
		\draw [style={dotted_edge}] (0) to (3);
	\end{pgfonlayer}
\end{tikzpicture}    
            \end{center}
            
            \caption{}
            \label{subfigf}
        \end{subfigure}
        \hfill
        \begin{subfigure}{0.18\textwidth}
            \begin{center}
                \begin{tikzpicture}[scale=0.6]
	\begin{pgfonlayer}{nodelayer}
		\node [style={black_dot}] (0) at (0, 0) {};
		\node [style={black_dot}] (1) at (0, 1) {};
		\node [style={black_dot}] (2) at (-0.75, -0.75) {};
		\node [style={black_dot}] (3) at (0.75, -0.75) {};
		\node [style={black_dot}] (4) at (-1.5, -1.5) {};
		\node [style={black_dot}] (5) at (1.5, -1.5) {};
		\node [style={black_dot}] (6) at (1, -2.5) {};
		\node [style={black_dot}] (7) at (2.5, -1) {};
		\node [style={black_dot}] (8) at (-1, -2.5) {};
		\node [style={black_dot}] (9) at (-2.5, -1) {};
	\end{pgfonlayer}
	\begin{pgfonlayer}{edgelayer}
		\draw [style={normal_edge}] (0) to (2);
		\draw [style={normal_edge}] (4) to (2);
		\draw [style={normal_edge}] (3) to (5);
		\draw [style={normal_edge}] (6) to (5);
		\draw [style={normal_edge}] (5) to (7);
		\draw [style={dotted_edge}] (1) to (0);
		\draw [style={dotted_edge}] (0) to (3);
		\draw [style={dotted_edge}] (4) to (8);
		\draw [style={dotted_edge}] (4) to (9);
	\end{pgfonlayer}
\end{tikzpicture}    
            \end{center}
            
            \caption{}
            \label{subfigg}
        \end{subfigure}
                
        \caption{Possibilities for components of monochromatic subgraphs. If $\mathcal{D}$ is strongly pertinent, (f) and (g) are impossible to obtain (a decomposition without $K_{1,3}''$ is suggested using full and dotted edges).}
        \label{fig:monochromatic_components}
    \end{figure}
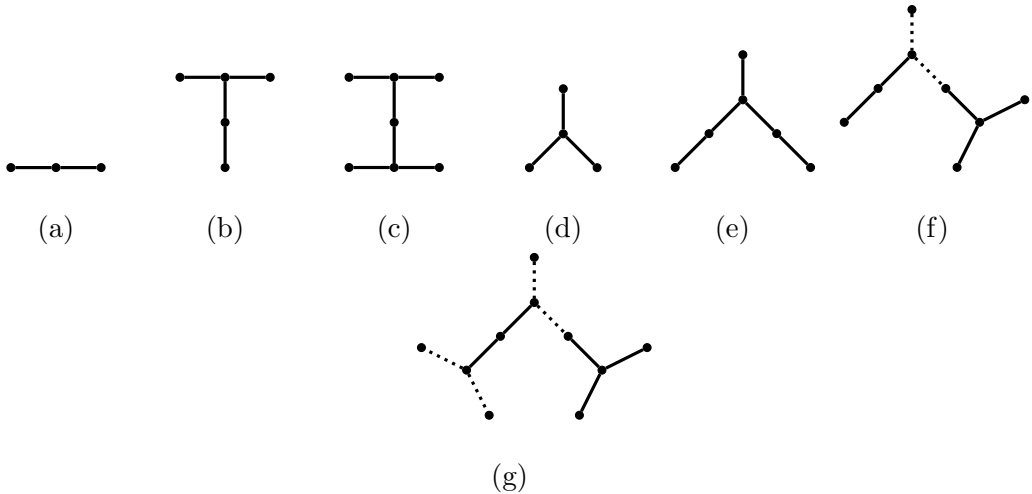

    The coloring $\varphi$ is locally irregular, and from the additional properties of $\varphi$ (see Theorem~\ref{subcubic graphs}) it follows that for every connected component of the monochromatic subgraph of $G$ we have, hypothetically speaking, seven possibilities how it could look like, see Figure~\ref{fig:monochromatic_components}.
    However, two of them, namely (f) and (g) in Figure~\ref{fig:monochromatic_components} contradicts the assumption that $\mathcal{D}$ is strongly pertinent.
    Hence, if the size of $G$ is even, every connected component of the monochromatic subgraph of $G$ is isomorphic to (a), (b), or (c) in Figure~\ref{fig:monochromatic_components}.
    If, on the other hand, the size of $G$ is odd, we have exactly one connected component of the subgraph of $G$ induced on color 1 which is isomorphic to (d) or (e) in Figure~\ref{fig:monochromatic_components}, and every other connected component of the monochromatic subgraph is isomorphic to (a), (b), or (c).

    \begin{figure}[h]
        \centering
        \begin{tikzpicture}
	\begin{pgfonlayer}{nodelayer}
		\node [style={black_dot}] (0) at (-1, 1) {};
		\node [style={black_dot}] (1) at (0, 1) {};
		\node [style={black_dot}] (2) at (1, 1) {};
		\node [style={black_dot}] (3) at (-1, -1) {};
		\node [style={black_dot}] (4) at (0, -1) {};
		\node [style={black_dot}] (5) at (1, -1) {};
		\node [style={black_dot}] (6) at (0, 0) {};
		\node [style={black_dot}] (7) at (1, 0) {};
		\node [style=empty, label={above:4}] (8) at (-0.5, 1) {};
		\node [style=empty, label={above:4}] (9) at (0.5, 1) {};
		\node [style=empty, label={left:4}] (10) at (0, 0.5) {};
		\node [style=empty, label={left:4}] (11) at (0, -0.5) {};
		\node [style=empty, label={below:4}] (12) at (-0.5, -1) {};
		\node [style=empty, label={below:4}] (13) at (0.5, -1) {};
		\node [style=empty, label={above:1}] (14) at (0.5, 0) {};
	\end{pgfonlayer}
	\begin{pgfonlayer}{edgelayer}
		\draw [style={normal_edge}] (0) to (1);
		\draw [style={normal_edge}] (1) to (2);
		\draw [style={normal_edge}] (1) to (6);
		\draw [style={normal_edge}] (6) to (4);
		\draw [style={normal_edge}] (4) to (3);
		\draw [style={normal_edge}] (4) to (5);
		\draw [style={dotted_edge}] (6) to (7);
	\end{pgfonlayer}
\end{tikzpicture}%
\hspace{2cm}%
\begin{tikzpicture}
	\begin{pgfonlayer}{nodelayer}
		\node [style={black_dot}] (0) at (-1, 1) {};
		\node [style={black_dot}] (1) at (0, 1) {};
		\node [style={black_dot}] (2) at (1, 1) {};
		\node [style={black_dot}] (3) at (-1, -1) {};
		\node [style={black_dot}] (4) at (0, -1) {};
		\node [style={black_dot}] (5) at (1, -1) {};
		\node [style={black_dot}] (6) at (0, 0) {};
		\node [style={black_dot}] (7) at (1, 0) {};
		\node [style=empty, label={above:4}] (8) at (-0.5, 1) {};
		\node [style=empty, label={above:4}] (9) at (0.5, 1) {};
		\node [style=empty, label={left:2}] (10) at (0, 0.5) {};
		\node [style=empty, label={left:2}] (11) at (0, -0.5) {};
		\node [style=empty, label={below:4}] (12) at (-0.5, -1) {};
		\node [style=empty, label={below:4}] (13) at (0.5, -1) {};
		\node [style=empty, label={above:1}] (14) at (0.5, 0) {};
	\end{pgfonlayer}
	\begin{pgfonlayer}{edgelayer}
		\draw [style={normal_edge}] (0) to (1);
		\draw [style={normal_edge}] (1) to (2);
		\draw [style={normal_edge}] (1) to (6);
		\draw [style={normal_edge}] (6) to (4);
		\draw [style={normal_edge}] (4) to (3);
		\draw [style={normal_edge}] (4) to (5);
		\draw [style={dotted_edge}] (6) to (7);
	\end{pgfonlayer}
\end{tikzpicture}
        \caption{Lowering the number of edges of color 4, if a component of $G[\varphi^{-1}(4)]$ is isomorphic to (c) (colors 1 and 2 may be changed to any combination of two colors from $\{1,2,3\}$).}
        \label{fig:avoid_H_shape}
    \end{figure}
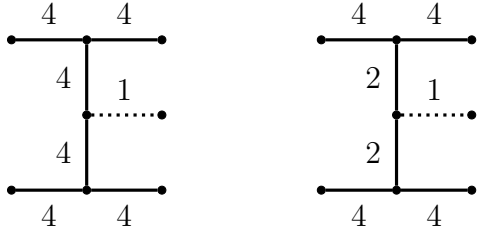
    
    From the assumption that the number of edges of color 4 is the minimum possible in $\varphi$, we get that there is no connected component of the subgraph of $G$ induced on edges of color 4 which is isomorphic to (c);
    if this was the case, we could recolor the edges of the middle part of (c) by a color that is not used on elements of $\mathcal{D}$ adjacent to it, see Figure~\ref{fig:avoid_H_shape}, and, thus, lower the number of edges colored by 4.
    Hence, connected components of $G[\varphi^{-1}(4)]$ are isomorphic to (a) or (b) in Figure~\ref{fig:monochromatic_components}.

    In the following, we will create a locally irregular coloring of $^2G$ which will use colors red, green, and blue, and which will have the following property (P): 
    \textit{if $p \in \mathcal{D}$ has $\varphi(p) \in \{1, 2, 3\}$ then its pendant vertices do not have red, green or blue degree equal to five in the coloring of $^2G$.}
    We color the edges of $^2G$ in two steps.
    Note that, if a color is assigned to an edge of $^2G$ in any step, it is never changed in the subsequent steps.
    
    First, we color every multiedge of $^2G$ which corresponds to an edge of color 1, 2 or 3 in $\varphi$ in the following way:
    if $\varphi(e) = 1$ then the multiedge $e$ is colored blue-blue in $^2G$
    if $\varphi(e) = 2$ then the multiedge $e$ is colored green-green in $^2G$,
    and if $\varphi(e) = 3$ then the multiedge $e$ is colored red-red in $^2G$. 
    Note that, after this step, multiedges corresponding to edges of color 4 in $G$ are not colored, 
    and the obtained partial coloring of $^2G$ has the property (P), because color degrees of every vertex in such a partial coloring are even.
    
    We now proceed to the second stage of the coloring of $^2G$ where the rest of the edges, namely $E(G[\varphi^{-1}(4)])$, are being colored.
    As we showed before, components of $G[\varphi^{-1}(4)]$ are isomorphic to (a) or (b).
    
    Suppose first that we have a component of $G[\varphi^{-1}(4)]$ which is isomorphic to (a); denote by $p$ the corresponding element of $\mathcal{D}$.
    If the central vertex of $p$ is not an end vertex of some $q \in \mathcal{D}$, color each multiedge of $p$ in $^2G$ red-blue.
    If the central vertex of $p$ is an end vertex of some $q \in \mathcal{D}$, we distinguish cases depending on $\varphi(q)$: 
    if $\varphi(q) = 1$ (i.e. the multiedges of $q$ are colored blue-blue in $^2G$), color each multiedge of $p$ green-red, 
    if $\varphi(q) = 2$ (i.e. the multiedges of $q$ are colored green-green in $^2G$), color each multiedge of $p$ blue-red,
    if $\varphi(q) = 3$ (i.e. the multiedges of $q$ are colored red-red in $^2G$), color each multiedge of $p$ green-blue.
    By doing this, we do not create a conflict between the central vertex of $p$ and the central vertex of $q$.
    Hence, if the conflict was created, it is between the end vertex of $p$ and some other vertex.
    
    Suppose that the multiedges of $p$ were colored green and blue (if other color combination was used, simply change the roles of the colors in the following argument).
    Let $v$ be an end vertex of $p$.
    Suppose first that $v$ is a central vertex of $r \in \mathcal{D}$.
    If $r$ is blue (a similar argument can be used for green) then the blue degree of $v$ is five. 
    The blue degree of the central vertex of $p$ is at most four, and the neighbors of $v$ in $r$ do not have blue degree five (one of them is in every case pendant and we use the property (P), and the second one is either pendant or the vertex of degree three in (d) or (e), and we use either the property (P) or the fact that it has blue degree six). 
    If $r$ is red then no conflict was created on this end of $p$.

    Suppose next that two elements $s$ and $t$ of $\mathcal{D}$ adjacent to $p$ contain $v$.
    We distinguish two non-symmetric cases depending on the colors of $s$ and $t$.
    Suppose that $\varphi(s) = 1$ and $\varphi(t) = 2$ (i.e., the multiedges of $s$ and $t$ are colored blue-blue and green-green, respectively).
    In this case, the blue and green degrees of the central vertex of $p$ are even, blue and green degrees of $v$ are three, the blue degree of the neighbor of $v$ on $s$ is at least four, and the green degree of the neighbor of $v$ on $t$ is at least four. 
    Hence, no conflicts were created, and the property (P) was not violated.
    Consider now the second case, when $\varphi(s) = 1$ and $\varphi(t) = 3$ (i.e., the multiedges of $s$ and $t$ are colored blue-blue and red-red, respectively). 
    In this case, the blue and green degrees of the central vertex of $p$ are even. Blue, green, and red degrees of $v$ are three, one, and two, respectively. 
    The blue degree of the neighbor of $v$ on $s$ is at least four, and the red degree of the neighbor of $v$ on $t$ is at least four.
    Hence, no conflict was created in this case also, and property (P) was not violated.
    For the overview of these cases, see Figure~\ref{fig:4_pendant_pst}.

    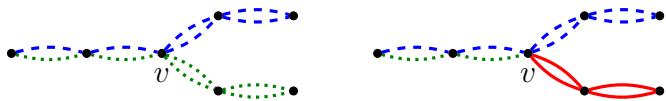
\begin{figure}[h]
        \centering
        \begin{tikzpicture}
	\begin{pgfonlayer}{nodelayer}
		\node [style={black_dot}] (0) at (-1, 0) {};
		\node [style={black_dot}] (1) at (0, 0) {};
		\node [style={black_dot}, label={below:$v$}] (2) at (1, 0) {};
		\node [style={black_dot}] (3) at (1.75, 0.5) {};
		\node [style={black_dot}] (4) at (1.75, -0.5) {};
		\node [style={black_dot}] (5) at (2.75, 0.5) {};
		\node [style={black_dot}] (6) at (2.75, -0.5) {};
	\end{pgfonlayer}
	\begin{pgfonlayer}{edgelayer}
		\draw [style={green_edge}, bend right=15] (0) to (1);
		\draw [style={green_edge}, bend right=15] (1) to (2);
		\draw [style={blue_edge}, bend left=15] (0) to (1);
		\draw [style={blue_edge}, bend left=15] (1) to (2);
		\draw [style={blue_edge}, bend left=15] (2) to (3);
		\draw [style={green_edge}, bend right=15] (2) to (4);
		\draw [style={blue_edge}, bend right=15] (2) to (3);
		\draw [style={green_edge}, bend left=15] (2) to (4);
		\draw [style={blue_edge}, bend right=15] (3) to (5);
		\draw [style={green_edge}, bend left=15] (4) to (6);
		\draw [style={blue_edge}, bend left=15] (3) to (5);
		\draw [style={green_edge}, bend right=15] (4) to (6);
	\end{pgfonlayer}
\end{tikzpicture}%
\hspace{1cm}%
\begin{tikzpicture}
	\begin{pgfonlayer}{nodelayer}
		\node [style={black_dot}] (0) at (-1, 0) {};
		\node [style={black_dot}] (1) at (0, 0) {};
		\node [style={black_dot}, label={below:$v$}] (2) at (1, 0) {};
		\node [style={black_dot}] (3) at (1.75, 0.5) {};
		\node [style={black_dot}] (4) at (1.75, -0.5) {};
		\node [style={black_dot}] (5) at (2.75, 0.5) {};
		\node [style={black_dot}] (6) at (2.75, -0.5) {};
	\end{pgfonlayer}
	\begin{pgfonlayer}{edgelayer}
		\draw [style={green_edge}, bend right=15] (0) to (1);
		\draw [style={green_edge}, bend right=15] (1) to (2);
		\draw [style={blue_edge}, bend left=15] (0) to (1);
		\draw [style={blue_edge}, bend left=15] (1) to (2);
		\draw [style={blue_edge}, bend left=15] (2) to (3);
		\draw [style={red_edge}, bend right=15] (2) to (4);
		\draw [style={blue_edge}, bend right=15] (2) to (3);
		\draw [style={red_edge}, bend left=15] (2) to (4);
		\draw [style={blue_edge}, bend right=15] (3) to (5);
		\draw [style={red_edge}, bend left=15] (4) to (6);
		\draw [style={blue_edge}, bend left=15] (3) to (5);
		\draw [style={red_edge}, bend right=15] (4) to (6);
	\end{pgfonlayer}
\end{tikzpicture}
        \caption{Cases when $p \in \mathcal{D}$ of color 4 in $\varphi$ is isomorphic to (a), and the end vertex $v$ of $p$ is an end vertex of $s$ and $t$, where $s,t \in \mathcal{D}$.}
        \label{fig:4_pendant_pst}
    \end{figure}   

    Suppose therefore that we colored every uncolored multiedges that corresponded to edges of components of $G[\varphi^{-1}(4)]$ isomorphic to $P_3$.
    We proceed to the coloring of the remaining multiedges, which correspond to edges of the components of $G[\varphi^{-1}(4)]$ isomorphic to (b).
    Let $p_1,p_2 \in \mathcal{D}$ be two adjacent elements of color 4 in $\varphi$. 
    Without loss of generality suppose that the central vertex of $p_1$ is the pendant vertex of $p_2$.
    Observe that both $p_1$ and $p_2$ are adjacent to elements of $\mathcal{D}$ of all colors from $\{1,2,3\}$, as otherwise we could recolor $p_1$ or $p_2$ by a color from $\{1,2,3\}$ and obtain a suitable locally irregular coloring of $G$ with fewer edges of color 4. 
    Hence, there are some elements $r_1, r_2, r_3$ of different colors from $\{1,2,3\}$ such that the pendant vertex of $r_1$ is the central vertex of $p_2$, and a pendant vertex of $p_2$ is also a pendant vertex of $r_2$ and $r_3$.
    Denote by $x$ the central vertex of $p_2$ and by $y$ the pendant vertex of $p_2$, $r_2$ and $r_3$.
    Observe that the neighbor of $x$ on $r_1$ is incident to three edges of color $\varphi(r_1)$; if this was not the case, then $p_1$ could be colored by $\varphi(r_1)$, and the number of edges colored by $4$ would be lower (and in some cases, the property that $\mathcal{D}$ is strongly pertinent would be contradicted, see (e) and (f) in Figure~\ref{fig:monochromatic_components}).
    
    For the number of elements of $\mathcal{D}$ adjacent to $p_1$, we have two possibilities, and we distinguish the cases.

    \textbf{Case 1.} $p_1$ is adjacent to four elements of $\mathcal{D}$, namely $p_2$, $s_1$, $s_2$ and $s_3$, where a pendant vertex of $p_1$ is the central vertex of $s_1$, and the other pendant vertex of $p_1$ is a pendant vertex of both $s_2$ and $s_3$. 
    For the overview of this case, see Figure~\ref{fig:case1}.
    Suppose that $\varphi(s_1) = 1$, $\varphi(s_2) = 2$, and $\varphi(s_3) = 3$, i.e., the multiedges corresponding to $s_1$, $s_2$ and $s_3$ are colored blue-blue, green-green and red-red, respectively (the similar argument works when the colors of $s_1, s_2, s_3$ are permuted).
    Color each multiedge of $p_1$ blue-red, and each multiedge of $p_2$ blue-green.
    See Figure~\ref{fig:case1} for an overview of color degrees. 
    This yields that such a coloring does not produce a conflict. 
    Note that each neighbor of the central vertex of $p_1$ on $s_1$ has the blue degree different from five, as it is either a pendant vertex of $s_1$ and (P) holds, or it is a vertex of degree three in (e).

    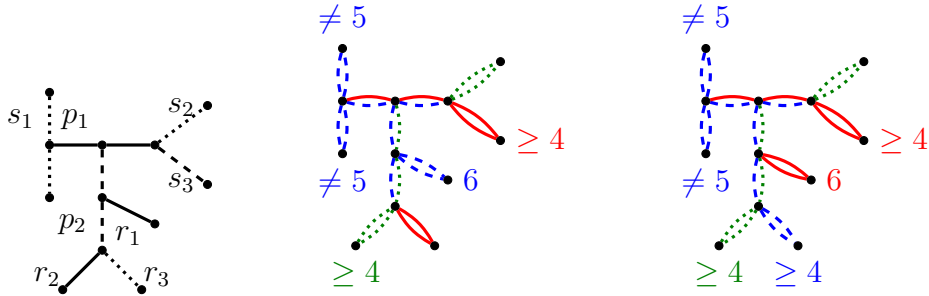
\begin{figure}[h]
        \centering
        \begin{tikzpicture}[scale=0.7]
	\begin{pgfonlayer}{nodelayer}
		\node [style={black_dot}] (0) at (-1, 0) {};
		\node [style={black_dot}] (1) at (0, 0) {};
		\node [style={black_dot}] (2) at (1, 0) {};
		\node [style={black_dot}] (3) at (-1, 1) {};
		\node [style={black_dot}] (4) at (-1, -1) {};
		\node [style={black_dot}] (5) at (2, 0.75) {};
		\node [style={black_dot}] (8) at (2, -0.75) {};
		\node [style={black_dot}] (10) at (0, -1) {};
		\node [style={black_dot}] (11) at (0, -2) {};
		\node [style={black_dot}] (12) at (1, -1.5) {};
		\node [style={black_dot}] (13) at (-0.75, -2.75) {};
		\node [style={black_dot}] (14) at (0.75, -2.75) {};
		\node [style=empty, label={left:$s_1$}] (15) at (-1, 0.5) {};
		\node [style=empty, label={above:$p_1$}] (16) at (-0.5, 0) {};
		\node [style=empty, label={left:$p_2$}] (17) at (0, -1.5) {};
		\node [style=empty, label={above:$s_2$}] (18) at (1.5, 0.25) {};
		\node [style=empty, label={below:$s_3$}] (19) at (1.5, -0.25) {};
		\node [style=empty, label={below:$r_1$}] (20) at (0.5, -1.25) {};
		\node [style=empty, label={left:$r_2$}] (21) at (-0.5, -2.5) {};
		\node [style=empty, label={right:$r_3$}] (22) at (0.5, -2.5) {};
	\end{pgfonlayer}
	\begin{pgfonlayer}{edgelayer}
		\draw [style={normal_edge}] (0) to (1);
		\draw [style={normal_edge}] (1) to (2);
		\draw [style={dotted_edge}] (3) to (0);
		\draw [style={dotted_edge}] (0) to (4);
		\draw [style={dotted_edge}] (2) to (5);
		\draw [style={dashed_edge}] (2) to (8);
		\draw [style={dashed_edge}] (1) to (10);
		\draw [style={dashed_edge}] (10) to (11);
		\draw [style={normal_edge}] (10) to (12);
		\draw [style={normal_edge}] (13) to (11);
		\draw [style={dotted_edge}] (11) to (14);
	\end{pgfonlayer}
\end{tikzpicture}
\hspace{1cm}
\begin{tikzpicture}[scale=0.7]
	\begin{pgfonlayer}{nodelayer}
		\node [style={black_dot}] (0) at (0.5, 0) {};
		\node [style={black_dot}] (1) at (1.5, 0) {};
		\node [style={black_dot}] (2) at (2.5, 0) {};
		\node [style={black_dot}, label={above:$\color{blue}{\neq 5}$}] (3) at (0.5, 1) {};
		\node [style={black_dot}, label={below:$\color{blue}{\neq 5}$}] (4) at (0.5, -1) {};
		\node [style={black_dot}] (5) at (3.5, 0.75) {};
		\node [style={black_dot}, label={right:$\color{red}{\geq 4}$}] (6) at (3.5, -0.75) {};
		\node [style={black_dot}] (7) at (1.5, -1) {};
		\node [style={black_dot}] (8) at (1.5, -2) {};
		\node [style={black_dot}, label={right:$\color{blue}{6}$}] (9) at (2.5, -1.5) {};
		\node [style={black_dot}, label={below:$\color{Green}{\geq 4}$}] (10) at (0.75, -2.75) {};
		\node [style={black_dot}] (11) at (2.25, -2.75) {};
	\end{pgfonlayer}
	\begin{pgfonlayer}{edgelayer}
		\draw [style={blue_edge}, bend right=15] (3) to (0);
		\draw [style={blue_edge}, bend right=15] (0) to (4);
		\draw [style={green_edge}, bend right=15] (2) to (5);
		\draw [style={red_edge}, bend right=15] (2) to (6);
		\draw [style={blue_edge}, bend right=15] (7) to (9);
		\draw [style={green_edge}, bend left=15] (10) to (8);
		\draw [style={red_edge}, bend right=15] (8) to (11);
		\draw [style={blue_edge}, bend right=15] (4) to (0);
		\draw [style={blue_edge}, bend right=15] (0) to (3);
		\draw [style={blue_edge}, bend left=15] (7) to (9);
		\draw [style={red_edge}, bend right=15] (6) to (2);
		\draw [style={green_edge}, bend left=15] (2) to (5);
		\draw [style={green_edge}, bend left=15] (8) to (10);
		\draw [style={red_edge}, bend left=15] (8) to (11);
		\draw [style={blue_edge}, bend right=15] (0) to (1);
		\draw [style={blue_edge}, bend right=15] (1) to (2);
		\draw [style={blue_edge}, bend right=15] (1) to (7);
		\draw [style={blue_edge}, bend right=15] (7) to (8);
		\draw [style={red_edge}, bend left=15] (0) to (1);
		\draw [style={red_edge}, bend left=15] (1) to (2);
		\draw [style={green_edge}, bend left=15] (1) to (7);
		\draw [style={green_edge}, bend left=15] (7) to (8);
	\end{pgfonlayer}
\end{tikzpicture}
\hspace{1cm}
\begin{tikzpicture}[scale=0.7]
	\begin{pgfonlayer}{nodelayer}
		\node [style={black_dot}] (0) at (0.5, 0) {};
		\node [style={black_dot}] (1) at (1.5, 0) {};
		\node [style={black_dot}] (2) at (2.5, 0) {};
		\node [style={black_dot}, label={above:$\color{blue}{\neq 5}$}] (3) at (0.5, 1) {};
		\node [style={black_dot}, label={below:$\color{blue}{\neq 5}$}] (4) at (0.5, -1) {};
		\node [style={black_dot}] (5) at (3.5, 0.75) {};
		\node [style={black_dot}, label={right:$\color{red}{\geq 4}$}] (6) at (3.5, -0.75) {};
		\node [style={black_dot}] (7) at (1.5, -1) {};
		\node [style={black_dot}] (8) at (1.5, -2) {};
		\node [style={black_dot}, label={right:$\color{red}{6}$}] (9) at (2.5, -1.5) {};
		\node [style={black_dot}, label={below:$\color{Green}{\geq 4}$}] (10) at (0.75, -2.75) {};
		\node [style={black_dot}, label={below:$\color{blue}{\geq 4}$}] (11) at (2.25, -2.75) {};
	\end{pgfonlayer}
	\begin{pgfonlayer}{edgelayer}
		\draw [style={blue_edge}, bend right=15] (3) to (0);
		\draw [style={blue_edge}, bend right=15] (0) to (4);
		\draw [style={green_edge}, bend right=15] (2) to (5);
		\draw [style={red_edge}, bend right=15] (2) to (6);
		\draw [style={red_edge}, bend right=15] (7) to (9);
		\draw [style={green_edge}, bend left=15] (10) to (8);
		\draw [style={blue_edge}, bend right=15] (8) to (11);
		\draw [style={blue_edge}, bend right=15] (4) to (0);
		\draw [style={blue_edge}, bend right=15] (0) to (3);
		\draw [style={red_edge}, bend left=15] (7) to (9);
		\draw [style={red_edge}, bend right=15] (6) to (2);
		\draw [style={green_edge}, bend left=15] (2) to (5);
		\draw [style={green_edge}, bend left=15] (8) to (10);
		\draw [style={blue_edge}, bend left=15] (8) to (11);
		\draw [style={blue_edge}, bend right=15] (0) to (1);
		\draw [style={blue_edge}, bend right=15] (1) to (2);
		\draw [style={blue_edge}, bend right=15] (1) to (7);
		\draw [style={blue_edge}, bend right=15] (7) to (8);
		\draw [style={red_edge}, bend left=15] (0) to (1);
		\draw [style={red_edge}, bend left=15] (1) to (2);
		\draw [style={green_edge}, bend left=15] (1) to (7);
		\draw [style={green_edge}, bend left=15] (7) to (8);
	\end{pgfonlayer}
\end{tikzpicture}
        \caption{Elements of $\mathcal{D}$ adjacent to $p_1$ and $p_2$ and possible colorings of $p_1$ and $p_2$ from Case 1.}
        \label{fig:case1}
    \end{figure}

    \textbf{Case 2.} $p_1$ is adjacent to five elements of $\mathcal{D}$, namely $p_2$, $t_1$, $t_2$, $t_3$ and $t_4$, where $p_1$, $t_1$ and $t_2$ share a common vertex, and $p_1$, $t_3$ and $t_4$ share a common vertex. 
    Suppose that $\varphi(t_1) = \varphi(t_4) = 1$, $\varphi(t_2) = 2$ and $\varphi(t_3) = 3$ (the similar argument works when the colors of $t_1, t_2, t_3, t_4$ are permuted). 
    There are two subcases to consider, depending on the color of $r_1$.
    However, despite the color of $r_1$, we color the edges of $p_1$ and $p_2$ in the same way: color the multiedge incident to a common vertex of $p_1$, $t_1$ and $t_2$ red-red, and the other multiedge of $p_1$ green-green. 
    Then color the multiedges of $p_2$ green-red.
    It is easy to see from the color degrees of the vertices of $p_1$, $p_2$ and their neighbors, that such a coloring does not create any conflict, see Figure~\ref{fig:case2}.

    \begin{figure}[h]
        \centering
        \begin{tikzpicture}[scale=0.7]
	\begin{pgfonlayer}{nodelayer}
		\node [style={black_dot}] (0) at (-1, 0) {};
		\node [style={black_dot}] (1) at (0, 0) {};
		\node [style={black_dot}] (2) at (1, 0) {};
		\node [style={black_dot}] (3) at (-2, 0.75) {};
		\node [style={black_dot}] (4) at (-2, -0.75) {};
		\node [style={black_dot}] (5) at (2, 0.75) {};
		\node [style={black_dot}] (8) at (2, -0.75) {};
		\node [style={black_dot}] (10) at (0, -1) {};
		\node [style={black_dot}] (11) at (0, -2) {};
		\node [style={black_dot}] (12) at (1, -1.5) {};
		\node [style={black_dot}] (13) at (-0.75, -2.75) {};
		\node [style={black_dot}] (14) at (0.75, -2.75) {};
		\node [style=empty, label={above:{$t_1$}}] (15) at (-1.5, 0.25) {};
		\node [style=empty, label={above:{$p_1$}}] (16) at (-0.5, 0) {};
		\node [style=empty, label={left:{$p_2$}}] (17) at (0, -1.5) {};
		\node [style=empty, label={above:{$t_3$}}] (18) at (1.5, 0.25) {};
		\node [style=empty, label={below:{$t_4$}}] (19) at (1.5, -0.25) {};
		\node [style=empty, label={above:{$r_1$}}] (20) at (0.5, -1.25) {};
		\node [style=empty, label={left:{$r_2$}}] (21) at (-0.5, -2.5) {};
		\node [style=empty, label={right:{$r_3$}}] (22) at (0.5, -2.5) {};
		\node [style=empty, label={below:{$t_2$}}] (23) at (-1.5, -0.25) {};
	\end{pgfonlayer}
	\begin{pgfonlayer}{edgelayer}
		\draw [style={normal_edge}] (0) to (1);
		\draw [style={normal_edge}] (1) to (2);
		\draw [style={dotted_edge}] (3) to (0);
		\draw [style={dotted_edge}] (0) to (4);
		\draw [style={dotted_edge}] (2) to (5);
		\draw [style={dashed_edge}] (2) to (8);
		\draw [style={dashed_edge}] (1) to (10);
		\draw [style={dashed_edge}] (10) to (11);
		\draw [style={normal_edge}] (10) to (12);
		\draw [style={normal_edge}] (13) to (11);
		\draw [style={dotted_edge}] (11) to (14);
	\end{pgfonlayer}
\end{tikzpicture}
\hspace{1cm}
\begin{tikzpicture}[scale=0.7]
	\begin{pgfonlayer}{nodelayer}
		\node [style={black_dot}] (0) at (0.5, 0) {};
		\node [style={black_dot}] (1) at (1.5, 0) {};
		\node [style={black_dot}] (2) at (2.5, 0) {};
		\node [style={black_dot}, label={above:$\color{blue}{\geq 4}$}] (3) at (-0.5, 0.75) {};
		\node [style={black_dot}, label={below:$\color{Green}{\geq 4}$}] (4) at (-0.5, -0.75) {};
		\node [style={black_dot}, label={above:$\color{red}{\geq 4}$}] (5) at (3.5, 0.75) {};
		\node [style={black_dot}, label={below:$\color{blue}{\geq 4}$}] (6) at (3.5, -0.75) {};
		\node [style={black_dot}] (7) at (1.5, -1) {};
		\node [style={black_dot}] (8) at (1.5, -2) {};
		\node [style={black_dot}, label={below:$\color{blue}{6}$}] (9) at (2.5, -1.5) {};
		\node [style={black_dot}, label={below:$\color{Green}{\geq 4}$}] (10) at (0.75, -2.75) {};
		\node [style={black_dot}, label={below:$\color{red}{\geq 4}$}] (11) at (2.25, -2.75) {};
	\end{pgfonlayer}
	\begin{pgfonlayer}{edgelayer}
		\draw [style={blue_edge}, bend right=15] (3) to (0);
		\draw [style={green_edge}, bend right=15] (0) to (4);
		\draw [style={red_edge}, bend right=15] (2) to (5);
		\draw [style={blue_edge}, bend right=15] (2) to (6);
		\draw [style={blue_edge}, bend right=15] (7) to (9);
		\draw [style={green_edge}, bend left=15] (10) to (8);
		\draw [style={red_edge}, bend right=15] (8) to (11);
		\draw [style={green_edge}, bend right=15] (4) to (0);
		\draw [style={blue_edge}, bend right=15] (0) to (3);
		\draw [style={blue_edge}, bend left=15] (7) to (9);
		\draw [style={blue_edge}, bend right=15] (6) to (2);
		\draw [style={red_edge}, bend left=15] (2) to (5);
		\draw [style={green_edge}, bend left=15] (8) to (10);
		\draw [style={red_edge}, bend left=15] (8) to (11);
		\draw [style={red_edge}, bend right=15] (0) to (1);
		\draw [style={green_edge}, bend right=15] (1) to (2);
		\draw [style={red_edge}, bend right=15] (1) to (7);
		\draw [style={red_edge}, bend right=15] (7) to (8);
		\draw [style={red_edge}, bend left=15] (0) to (1);
		\draw [style={green_edge}, bend left=15] (1) to (2);
		\draw [style={green_edge}, bend left=15] (1) to (7);
		\draw [style={green_edge}, bend left=15] (7) to (8);
	\end{pgfonlayer}
\end{tikzpicture}
\hspace{1cm}
\begin{tikzpicture}[scale=0.7]
	\begin{pgfonlayer}{nodelayer}
		\node [style={black_dot}] (0) at (0.5, 0) {};
		\node [style={black_dot}] (1) at (1.5, 0) {};
		\node [style={black_dot}] (2) at (2.5, 0) {};
		\node [style={black_dot}, label={above:$\color{blue}{\geq 4}$}] (3) at (-0.5, 0.75) {};
		\node [style={black_dot}, label={below:$\color{Green}{\geq 4}$}] (4) at (-0.5, -0.75) {};
		\node [style={black_dot}, label={above:$\color{red}{\geq 4}$}] (5) at (3.5, 0.75) {};
		\node [style={black_dot}, label={below:$\color{blue}{\geq 4}$}] (6) at (3.5, -0.75) {};
		\node [style={black_dot}] (7) at (1.5, -1) {};
		\node [style={black_dot}] (8) at (1.5, -2) {};
		\node [style={black_dot}, label={below:$\color{red}{6}$}] (9) at (2.5, -1.5) {};
		\node [style={black_dot}, label={below:$\color{Green}{\geq 4}$}] (10) at (0.75, -2.75) {};
		\node [style={black_dot}, label={below:$\color{blue}{\geq 4}$}] (11) at (2.25, -2.75) {};
	\end{pgfonlayer}
	\begin{pgfonlayer}{edgelayer}
		\draw [style={blue_edge}, bend right=15] (3) to (0);
		\draw [style={green_edge}, bend right=15] (0) to (4);
		\draw [style={red_edge}, bend right=15] (2) to (5);
		\draw [style={blue_edge}, bend right=15] (2) to (6);
		\draw [style={red_edge}, bend right=15] (7) to (9);
		\draw [style={green_edge}, bend left=15] (10) to (8);
		\draw [style={blue_edge}, bend right=15] (8) to (11);
		\draw [style={green_edge}, bend right=15] (4) to (0);
		\draw [style={blue_edge}, bend right=15] (0) to (3);
		\draw [style={red_edge}, bend left=15] (7) to (9);
		\draw [style={blue_edge}, bend right=15] (6) to (2);
		\draw [style={red_edge}, bend left=15] (2) to (5);
		\draw [style={green_edge}, bend left=15] (8) to (10);
		\draw [style={blue_edge}, bend left=15] (8) to (11);
		\draw [style={red_edge}, bend right=15] (0) to (1);
		\draw [style={green_edge}, bend right=15] (1) to (2);
		\draw [style={red_edge}, bend right=15] (1) to (7);
		\draw [style={red_edge}, bend right=15] (7) to (8);
		\draw [style={red_edge}, bend left=15] (0) to (1);
		\draw [style={green_edge}, bend left=15] (1) to (2);
		\draw [style={green_edge}, bend left=15] (1) to (7);
		\draw [style={green_edge}, bend left=15] (7) to (8);
	\end{pgfonlayer}
\end{tikzpicture}
        \caption{Elements of $\mathcal{D}$ adjacent to $p_1$ and $p_2$ and possible colorings of $p_1$ and $p_2$ from Case 2 (the subcase when $r_1$ is blue is in the middle, and the subcase when $r_1$ is red is on the right).}
        \label{fig:case2}
    \end{figure}
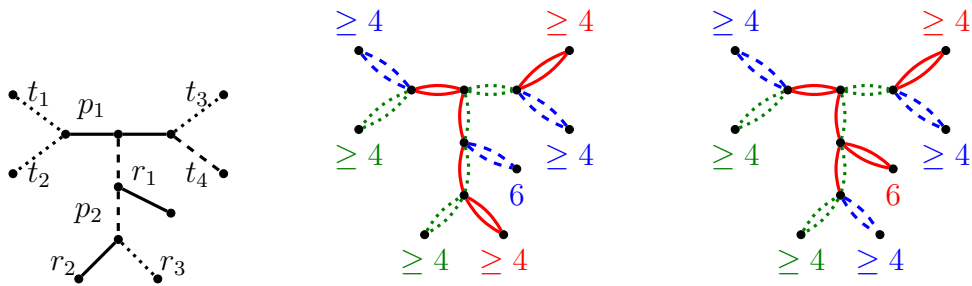

    Hence, every edge of $^2G$ was colored and no conflict was created in the process, which completes the proof.
\end{proof}

Now, we prove Conjecture 3 for two special families of subcubic graphs.

\begin{tw}\label{subcubic_independent}
    Let $G$ be a connected subcubic graph different from $K_2$. 
    If the set of all vertices of degree three in $G$ is an independent set then $\operatorname{lir}(^2G) \leq 2$. 
\end{tw}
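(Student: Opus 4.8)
The plan is to recast the claim as an edge-labelling problem for $G$ and to solve it thread by thread. Write $d(v)=\deg_G(v)$. If $\Delta(G)\le 2$ then $G$ is a path or a cycle, hence a cactus, and $\operatorname{lir}(^2G)\le 2$ by Theorem~\ref{cycle}; so I assume $\Delta(G)=3$ and let $V_i$ denote the set of vertices of degree $i$, where $V_3$ is independent and nonempty and $G$ is neither a path nor a cycle.

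\emph{Reduction to a labelling.} For $\psi\colon E(G)\to\{-1,0,1\}$ put $h(v)=\sum_{e\ni v}\psi(e)$, and colour $^2G$ by making, for each edge $e$, exactly $\psi(e)+1$ of its two parallel copies red and the other(s) blue. Then in $^2G$ the vertex $v$ has red degree $d(v)+h(v)$ and blue degree $d(v)-h(v)$; so if
\[
\bigl|h(u)-h(v)\bigr|\neq\bigl|d(u)-d(v)\bigr|\qquad\text{for every }uv\in E(G),
\]
then adjacent vertices differ in red degree and in blue degree, and $\operatorname{lir}(^2G)\le 2$. Because $V_3$ is independent and $G\neq K_2$, the right-hand side equals $0$ exactly on the edges inside $V_2$, equals $2$ exactly on the $V_1$--$V_3$ edges, and equals $1$ on all other edges; so it suffices to construct such a $\psi$.

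\emph{Threads.} Decompose $E(G)$ into \emph{threads}, i.e.\ the edge sets of the maximal paths of $G$ all of whose internal vertices lie in $V_2$, together with the cycles attached at a single vertex of $V_3$. Since $G$ is connected and $V_3\neq\emptyset$, every thread has an endpoint in $V_3$; thus each thread is a $V_3$--$V_3$ path, a $V_3$--$V_1$ path, or a pendant cycle at a vertex of $V_3$. On a thread with edges $e_1,\dots,e_k$ and internal vertices $z_1,\dots,z_{k-1}$ one has $h(z_i)=\psi(e_i)+\psi(e_{i+1})$, so $h(z_i)\neq h(z_{i+1})$ is equivalent to $\psi(e_i)\neq\psi(e_{i+2})$; hence the constraints among the internal vertices can always be satisfied by choosing $\psi$ along the odd-indexed and along the even-indexed edges so that consecutive terms of each subsequence differ (three values always suffice), while a thread ending at a leaf only forces its penultimate edge to carry $\psi=0$ (which is harmless). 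This leaves free the value of $\psi$ on the first edge of each thread, up to a parity requirement near the $V_3$-endpoints, and these are to be fixed at the vertices of $V_3$: at $t\in V_3$ the three incident $\psi$-values sum to $h(t)$, and the natural first attempt is to force $h(t)=0$ — this makes $h$ even on the $V_2$-neighbours of $t$ and the $V_1$-neighbour condition automatic.

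\emph{The hard part.} Forcing $h\equiv 0$ on $V_3$ can be impossible: if $t$ is joined to degree-$3$ vertices by three threads of length $3$, each such thread forces both of its end edges to carry odd $\psi$, so $h(t)$ is forced odd (the graph $\Theta_{3,3,3}$ is the smallest instance). The remedy is to permit $h(t)\in\{-1,0,1\}$ and to choose these values with the two ends of each thread coordinated (opposite signs along a short $V_3$--$V_3$ thread, and so on), so that all internal-thread constraints and all $V_3$-neighbour constraints become simultaneously satisfiable. I expect this to require a short case analysis over the multiset of thread types incident to each vertex of $V_3$ — short $V_3$--$V_3$ threads, threads of length $1$, and pendant cycles being the delicate cases — producing in each case an admissible choice of the first-edge values; patching these local choices together along all threads yields $\psi$ satisfying the displayed inequality, hence a locally irregular $2$-colouring of $^2G$. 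Proving that these per-vertex choices on $V_3$ can always be made globally consistent is the main obstacle of the proof.
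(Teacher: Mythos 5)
Your reduction to the labelling $\psi\colon E(G)\to\{-1,0,1\}$ is correct and is a genuinely different strategy from the paper's (which runs a minimal-counterexample argument, repeatedly deleting a pendant path, a pendant triangle, or a pair of adjacent degree-two vertices and extending a suitable coloring of the smaller graph). But the proposal is not a proof: the step you yourself flag as ``the main obstacle'' --- showing that the per-vertex choices of $h(t)\in\{-1,0,1\}$ at the degree-three vertices, together with admissible first-edge values on each incident thread, can always be made globally consistent --- is exactly where the entire difficulty of the theorem sits, and you give no argument for it. The coupling is real: as your own $\Theta_{3,3,3}$ example shows, a short thread joining two degree-three vertices $s$ and $t$ constrains the pair $(h(s),h(t))$ jointly; a pendant cycle at $t$ contributes two of the three edge-values at $t$; and a degree-three vertex may be incident to an arbitrary multiset of thread types, including several short threads sharing both endpoints. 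Nothing in the proposal rules out a configuration in which the admissible value sets forced at the two ends of the various threads cannot be simultaneously realized. For calibration, the paper's own proof of this statement resorts to a computer search over all $15^3$ boundary configurations in one of its cases, so the ``short case analysis'' you are deferring should be expected to be substantial rather than routine.

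A second, smaller concern: the displayed condition $\bigl|h(u)-h(v)\bigr|\neq\bigl|d(u)-d(v)\bigr|$ is strictly stronger than local irregularity of the resulting $2$-coloring, since it forbids a conflict in a colour even when the multiedge $uv$ carries no copy of that colour (for instance, a blue-blue multiedge between a degree-two and a degree-three vertex only requires their blue degrees to differ). By strengthening the target you may have made the system infeasible on graphs where the theorem nevertheless holds; at minimum the degenerate configurations (threads of length one and two, pendant cycles, subdivided stars) would need to be checked for feasibility of the stronger system before the approach can be trusted to close.
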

\begin{proof}
    We prove that there is a locally irregular edge 2-coloring of $^2G$ with colors red and blue such that no pendant multiedge colored red-blue has an end vertex with the red and the blue degree equal to three. 
    Suppose to the contrary that this is not true.
    Let $G$ be a counterexample with the minimum number of edges.

    Note that 2-multigraphs created from paths (of length at least two) and cycles admit locally irregular edge 2-colorings, see Theorem~\ref{cycle}. 
    Moreover, there is no vertex of degree six in these 2-multigraphs, hence, there is not a pendant multiedge colored red-blue with an end vertex of the red and blue degrees three and three. Thus, $G$ is neither a path nor a cycle.
    
    Furthermore, $G$ is neither $K_{1,3}$ nor $K_{1,3}$ with some of its edges subdivided once since these graphs are locally irregular.
    If $G$ is $K_{1,3}$ with one of its edges subdivided twice, and the remaining two edges subdivided once, color the last two edges on the longest path blue, and the rest of the graph red; this yields $\operatorname{lir}(G) \leq 2$ and subsequently $\operatorname{lir}(^2G) \leq 2$.
    If $G$ is a triangle with a pendant edge attached to one of its vertices, we can color two adjacent edges blue, and two remaining edges red; we get $\operatorname{lir}(G) \leq2$, and thus $\operatorname{lir}(^2G) \leq 2$.

    Now, suppose that $H$ is a subgraph of $G$ isomorphic to $K_{1,3}$, or $K_{1,3}$ with at most two of its edges subdivided once, attached to the rest of the graph $G$ by a vertex of degree one adjacent to a vertex of degree three in $H$.
    We already showed that $G$ is not $K_{1,3}$ with all its edges subdivided once.
    Hence, we may suppose that the graph $G'=G-H$ is different from $K_2$.
    From the minimality of $G$ it follows that there is a locally irregular edge 2-coloring of $^2G'$ in which red-blue colored pendant multiedges are not incident to a vertex of the red and blue degrees three and three.
    Since $H$ is attached to a pendant edge of $G'$ at least one of the two monochromatic colorings of $^2H$ does not create a conflict on the ends of the pendant edge of $G'$ that $H$ is attached to. 
    Thus, $\operatorname{lir}(^2G)\leq 2$ in such a case.

    Suppose that there is a pendant path $P$ attached to a vertex $v_1$ of degree three (since $G$ is not a path) such that the removal of $P$ and $v_1$ results in a graph without isolated edges (otherwise there is a pendant $K_{1,3}$ or $K_{1,3}$ with some edges subdivided once).
    Denote by $v_0$ and $v_2$ the neighbors of $v_1$ that are not on $P$.
    Denote by $G'$ the graph obtained from $G$ after removing $P$ and $v_1$.
    Since each component of $G'$ is different from $K_2$, $^2G'$ admits a suitable coloring.
    Note that, since vertices of degree three are independent in $G$, both $v_0$ and $v_2$ are of degree one in $G'$.
    Now, we consider every possibility how the multiedges incident to $v_0$ and $v_2$ in $^2G'$ are colored.
    Denote by $e_0$ and $e_2$ the multiedges incident to $v_0$ and $v_2$ in $^2G'$, respectively.
    We distinguish four cases depending on the colors of $e_0$ and $e_2$.
    For an overview of these cases, see Figure~\ref{fig:pendant_path}.

    \begin{figure}[h]
        \centering
        \begin{tikzpicture}
	\begin{pgfonlayer}{nodelayer}
		\node [style={black_dot}] (0) at (-2, 0) {};
		\node [style={black_dot}, label={above:$v_0$}] (1) at (-1, 0) {};
		\node [style={black_dot}, label={above:$v_1$}] (2) at (0, 0) {};
		\node [style={black_dot}, label={above:$v_2$}] (3) at (1, 0) {};
		\node [style={black_dot}] (4) at (2, 0) {};
		\node [style={black_dot}] (5) at (0, -1) {};
		\node [style=empty, label={above:$e_0$}] (6) at (-1.5, 0) {};
		\node [style=empty, label={above:$e_2$}] (7) at (1.5, 0) {};
	\end{pgfonlayer}
	\begin{pgfonlayer}{edgelayer}
		\draw [style={blue_edge}, bend right=15] (0) to (1);
		\draw [style={blue_edge}, bend right=15] (3) to (4);
		\draw [style={blue_edge}, bend left=15] (0) to (1);
		\draw [style={blue_edge}, bend left=15] (3) to (4);
		\draw [style={red_edge}, bend right=15] (1) to (2);
		\draw [style={red_edge}, bend right=15] (2) to (3);
		\draw [style={red_edge}, bend right=15] (2) to (5);
		\draw [style={red_edge}, bend left=15] (1) to (2);
		\draw [style={red_edge}, bend left=15] (2) to (3);
		\draw [style={red_edge}, bend left=15] (2) to (5);
	\end{pgfonlayer}
\end{tikzpicture}%
\hspace{1cm}%
\begin{tikzpicture}
	\begin{pgfonlayer}{nodelayer}
		\node [style={black_dot}] (0) at (-2, 0) {};
		\node [style={black_dot}, label={above:$v_0$}] (1) at (-1, 0) {};
		\node [style={black_dot}, label={above:$v_1$}] (2) at (0, 0) {};
		\node [style={black_dot}, label={above:$v_2$}] (3) at (1, 0) {};
		\node [style={black_dot}, label={below:$\color{red}{\neq3}$}] (4) at (2, 0) {};
		\node [style={black_dot}] (5) at (0, -1) {};
		\node [style=empty, label={above:$e_0$}] (6) at (-1.5, 0) {};
		\node [style=empty, label={above:$e_2$}] (7) at (1.5, 0) {};
	\end{pgfonlayer}
	\begin{pgfonlayer}{edgelayer}
		\draw [style={blue_edge}, bend right=15] (0) to (1);
		\draw [style={red_edge}, bend right=15] (3) to (4);
		\draw [style={blue_edge}, bend left=15] (0) to (1);
		\draw [style={red_edge}, bend left=15] (3) to (4);
		\draw [style={red_edge}, bend right=15] (1) to (2);
		\draw [style={blue_edge}, bend right=15] (2) to (3);
		\draw [style={red_edge}, bend right=15] (2) to (5);
		\draw [style={red_edge}, bend left=15] (1) to (2);
		\draw [style={red_edge}, bend left=15] (2) to (3);
		\draw [style={blue_edge}, bend left=15] (2) to (5);
	\end{pgfonlayer}
\end{tikzpicture}%
\hspace{1cm}%
\begin{tikzpicture}
	\begin{pgfonlayer}{nodelayer}
		\node [style={black_dot}, label={below:$\color{blue}{3}$}] (0) at (-2, 0) {};
		\node [style={black_dot}, label={above:$v_0$}] (1) at (-1, 0) {};
		\node [style={black_dot}, label={above:$v_1$}] (2) at (0, 0) {};
		\node [style={black_dot}, label={above:$v_2$}] (3) at (1, 0) {};
		\node [style={black_dot}, label={below:$\color{red}{3}$}] (4) at (2, 0) {};
		\node [style={black_dot}] (5) at (0, -1) {};
		\node [style=empty, label={above:$e_0$}] (6) at (-1.5, 0) {};
		\node [style=empty, label={above:$e_2$}] (7) at (1.5, 0) {};
	\end{pgfonlayer}
	\begin{pgfonlayer}{edgelayer}
		\draw [style={blue_edge}, bend right=15] (0) to (1);
		\draw [style={red_edge}, bend right=15] (3) to (4);
		\draw [style={blue_edge}, bend left=15] (0) to (1);
		\draw [style={red_edge}, bend left=15] (3) to (4);
		\draw [style={blue_edge}, bend right=15] (1) to (2);
		\draw [style={blue_edge}, bend right=15] (2) to (3);
		\draw [style={blue_edge}, bend right=15] (2) to (5);
		\draw [style={blue_edge}, bend left=15] (1) to (2);
		\draw [style={blue_edge}, bend left=15] (2) to (3);
		\draw [style={blue_edge}, bend left=15] (2) to (5);
	\end{pgfonlayer}
\end{tikzpicture}

\begin{tikzpicture}
	\begin{pgfonlayer}{nodelayer}
		\node [style={black_dot}, label={below:$\color{red}{\neq 3}$}] (0) at (-2, 0) {};
		\node [style={black_dot}, label={above:$v_0$}] (1) at (-1, 0) {};
		\node [style={black_dot}, label={above:$v_1$}] (2) at (0, 0) {};
		\node [style={black_dot}, label={above:$v_2$}] (3) at (1, 0) {};
		\node [style={black_dot}] (4) at (2, 0) {};
		\node [style={black_dot}] (5) at (0, -1) {};
		\node [style=empty, label={above:$e_0$}] (6) at (-1.5, 0) {};
		\node [style=empty, label={above:$e_2$}] (7) at (1.5, 0) {};
	\end{pgfonlayer}
	\begin{pgfonlayer}{edgelayer}
		\draw [style={blue_edge}, bend right=15] (0) to (1);
		\draw [style={blue_edge}, bend right=15] (3) to (4);
		\draw [style={red_edge}, bend left=15] (0) to (1);
		\draw [style={red_edge}, bend left=15] (3) to (4);
		\draw [style={red_edge}, bend right=15] (1) to (2);
		\draw [style={red_edge}, bend right=15] (2) to (3);
		\draw [style={red_edge}, bend right=15] (2) to (5);
		\draw [style={red_edge}, bend left=15] (1) to (2);
		\draw [style={red_edge}, bend left=15] (2) to (3);
		\draw [style={red_edge}, bend left=15] (2) to (5);
	\end{pgfonlayer}
\end{tikzpicture}%
\hspace{1cm}%
\begin{tikzpicture}
	\begin{pgfonlayer}{nodelayer}
		\node [style={black_dot}, label={below:$\color{red}{\neq 3}$}] (0) at (-2, 0) {};
		\node [style={black_dot}, label={above:$v_0$}] (1) at (-1, 0) {};
		\node [style={black_dot}, label={above:$v_1$}] (2) at (0, 0) {};
		\node [style={black_dot}, label={above:$v_2$}] (3) at (1, 0) {};
		\node [style={black_dot}, label={below:$\color{red}{\neq 3}$}] (4) at (2, 0) {};
		\node [style={black_dot}] (5) at (0, -1) {};
		\node [style=empty, label={above:$e_0$}] (6) at (-1.5, 0) {};
		\node [style=empty, label={above:$e_2$}] (7) at (1.5, 0) {};
	\end{pgfonlayer}
	\begin{pgfonlayer}{edgelayer}
		\draw [style={blue_edge}, bend right=15] (0) to (1);
		\draw [style={blue_edge}, bend right=15] (3) to (4);
		\draw [style={red_edge}, bend left=15] (0) to (1);
		\draw [style={red_edge}, bend left=15] (3) to (4);
		\draw [style={red_edge}, bend right=15] (1) to (2);
		\draw [style={red_edge}, bend right=15] (2) to (3);
		\draw [style={red_edge}, bend right=15] (2) to (5);
		\draw [style={red_edge}, bend left=15] (1) to (2);
		\draw [style={red_edge}, bend left=15] (2) to (3);
		\draw [style={red_edge}, bend left=15] (2) to (5);
	\end{pgfonlayer}
\end{tikzpicture}
        \caption{Cases considered in the proof of Theorem~\ref{subcubic_independent}.}
        \label{fig:pendant_path}
    \end{figure}
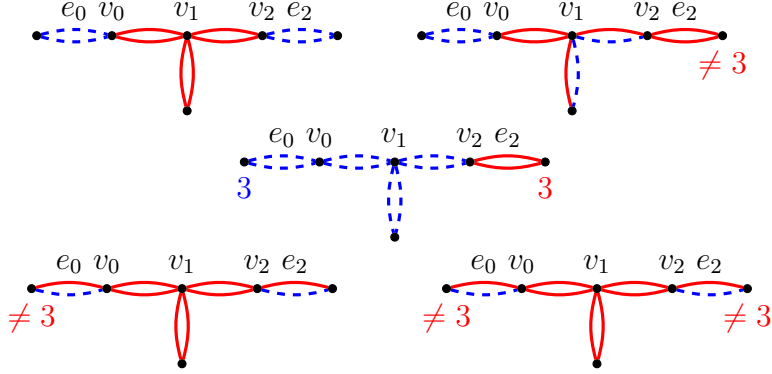

    \textbf{Case 1.}
    Suppose that $e_0$ and $e_2$ are monochromatic, colored with the same color, without loss of generality blue-blue.
    In this case, color multiedges incident to $v_1$ red-red. 
    If $P$ is of odd length, color the remaining multiedges by alternating blue-blue and red-red on pairs of adjacent multiedges, starting with blue-blue.
    If $P$ is of even length, use red-red on the multiedge closest to $v_1$ and then continue as in the previous case.
    Hence, in all these cases, each multiedge of $^2P$ is monochromatic.
    Clearly, the obtained coloring is locally irregular and the pendant multiedge of $^2P$ is not red-blue.

    \textbf{Case 2.}
    Suppose that $e_0$ and $e_2$ are monochromatic, but of different colors. 
    Without loss of generality, let $e_0$ be blue-blue and let $e_2$ be red-red.
    We distinguish three subcases.

     \textbf{Subcase 2.1.}
    If the red degree of the neighbor of $v_2$ is different from three, then color $v_0v_1$ red-red, $v_1v_2$ red-blue, and the edge of $P$ incident to $v_1$ also red-blue.
    If the length of $P$ is odd, color the remaining multiedges by alternating blue-blue and red-red on pairs of adjacent multiedges, starting with blue-blue.
    If the length of $P$ is even, color the uncolored multiedge closest to $v_1$ blue-blue, and then continue as in the previous case, starting with red-red. 
    Hence, in all cases, every multiedge of $^2P$ is monochromatic with the exception of the multiedge incident to $v_1$. 
    However, since the red and blue degrees of $v_1$ are four and two, respectively, the obtained coloring preserves the property that there is not a red-blue pendant multiedge with the end vertex of the red and blue degrees three and three.

    \textbf{Subcase 2.2.}
    If the red degree of the neighbor of $v_2$ is three and the blue degree of the neighbor of $v_0$ is not three, reverse the roles of the blue and red color in Subcase 2.1.

    \textbf{Subcase 2.3.}
    If the blue degree of the neighbor of $v_0$ is not three and the red degree of the neighbor of $v_2$ is not three, then color every multiedge incident to $v_1$ blue-blue.
    Color the remaining multiedges of $^2P$ in a similar way as in Case 1 (reverse the colors on $^2P$).

    \textbf{Case 3.}
    Suppose that $e_0$ is red-blue, and $e_1$ is monochromatic.
    Without loss of generality, let $e_2$ be blue-blue.
    Observe that the red degree of the neighbor of $v_0$ is not three; otherwise the blue degree of such a vertex would be three, but this is not possible due to the additional assumption that the end vertex of a multicolored pendant multiedge (in this case $e_0$ in $^2G'$) does not have the red and blue degrees equal to three and three.
    Hence, in this case, color every multiedge incident to $v_1$ red-red, and the remaining multiedges of $^2P$ as in Case 1.

    \textbf{Case 4.}
    Suppose that both $e_0$ and $e_2$ are red-blue.
    As in Case 3, the neighbors of $v_0$ and $v_2$ do not have red and blue degrees equal to three and three.
    Hence, in this case, color every multiedge incident to $v_1$ red-red, and color the remaining edges of $P$ as in Case 1.

    In the following, we assume that $G$ does not contain a pendant path.
    Suppose that there is a pendant triangle in $G$, i.e., two vertices $v_1, v_2$ of degree two and a vertex $v_3$ of degree three such that $v_1v_2, v_1v_3, v_2v_3$ are edges of $G$.
    Let $G'$ be a graph obtained from $G$ after removing the vertices $v_1,v_2,v_3$.
    Consider the locally irregular 2-coloring of $^2G'$ with red and blue where no red-blue pendant multiedge has an end vertex with the red and blue degrees equal to three and three.
    Denote by $v_4$ the neighbor of $v_3$ different from $v_1$ and $v_2$.
    Note that all neighbors of $v_3$ are of degree two in $G$.
    Hence, $v_4$ is a pendant vertex in $G'$.
    
    If the multiedge incident to $v_4$ in $^2G'$ is monochromatic, without loss of generality blue-blue, color the multiedges $v_3v_4,v_1v_3$ red-red, and the remaining multiedges blue-blue.
    
    If the multiedge incident to $v_4$ is red-blue, the red or the blue degree of the neighbor of $v_4$ is different from three. 
    Without loss of generality suppose that the red degree of the neighbor of $v_4$ is not three.
    Then, once again, color the edges $v_3v_4,v_1v_3$ red-red, and the remaining edges blue-blue.

    Thus, in all cases if $G$ has a pendant triangle, $^2G$ admits a locally irregular edge 2-coloring.

    Therefore, in the following, we suppose that $G$ does not have a pendant triangle.
    Suppose that there are two adjacent vertices $v_1,v_2$ of degree two in $G$.
    Since $G$ is not a cycle, we may suppose that $v_1$ is adjacent to a vertex $v_0$ of degree three.
    Let $G'$ be a graph obtained from $G$ after deleting $v_0$ and $v_1$.
    Consider the locally irregular edge coloring of $^2G'$ with red and blue where no red-blue pendant multiedge has an end vertex with the red and blue degrees equal to three and three.
    If $G'$ is disconnected, from the fact that $G$ does not have a pendant path, it follows that $G'$ does not contain isolated edges. 
    Hence, each component of $^2G'$ admits a suitable locally irregular edge coloring.
    
    Denote by $x_0$ and $y_0$ the neighbors of $v_0$ different from $v_1$.
    Note that $x_0$ and $y_0$ are vertices of degree two in $G$ ($G$ does not contain a pendant vertex).
    Denote by $x_1$ and $y_1$ the neighbors of $x_0$ and $y_0$ in $G'$, respectively (since $G$ does not contain a pendant triangle, both $x_0$ and $y_0$ are different from $v_2$). 
    Denote by $z$ the neighbor of $v_2$ different from $v_1$.
    Similarly to the previous cases discussed in this proof, we distinguish cases depending on colors used on multiedges $x_0x_1$, $y_0y_1$, and $v_2z$.
    For each coloring of each of the considered multiedges, we also distinguish cases depending on the color degrees of vertices $x_1$, $y_1$ and $z$. 
    In general, for each of these three multiedges we have 15 possibilities on how they are colored, and what are the color degrees on one of their ends, see Figure~\ref{fig:15_colorings}.
    Due to the number of all cases considered, we used a computer program that, for each combination of colorings of $x_0x_1, y_0y_1, v_2z$,  goes through every possibility of the coloring of the removed middle part (multiedges incident to $v_1$ and $v_2$) and checks for possible conflicts on the vertices of this subgraph.
    A suitable coloring of the removed multiedges can be found in every case. 

   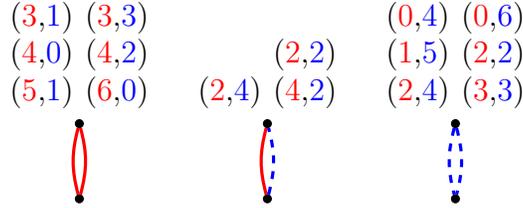
\begin{figure}[h]
        \centering
        \begin{tikzpicture}
	\begin{pgfonlayer}{nodelayer}
		\node [style={black_dot}] (0) at (-2, 1) {};
		\node [style=empty] (1) at (-2, 1) {};
		\node [style=empty, label={above:$\color{black}{(}\color{red}{5}\color{black}{,}\color{blue}{1}\color{black}{)}$}] (6) at (-2.5, 1) {};
		\node [style=empty, label={above:$\color{black}{(}\color{red}{4}\color{black}{,}\color{blue}{2}\color{black}{)}$}] (7) at (-1.5, 1.5) {};
		\node [style=empty, label={above:$\color{black}{(}\color{red}{3}\color{black}{,}\color{blue}{3}\color{black}{)}$}] (9) at (-1.5, 2) {};
		\node [style=empty, label={above:$\color{black}{(}\color{red}{4}\color{black}{,}\color{blue}{0}\color{black}{)}$}] (17) at (-2.5, 1.5) {};
		\node [style=empty, label={above:$\color{black}{(}\color{red}{3}\color{black}{,}\color{blue}{1}\color{black}{)}$}] (18) at (-2.5, 2) {};
		\node [style={black_dot}] (1) at (-2, 0) {};
		\node [style={black_dot}] (2) at (0.5, 1) {};
		\node [style=empty] (10) at (0.5, 1) {};
		\node [style=empty, label={above:$\color{black}{(}\color{red}{4}\color{black}{,}\color{blue}{2}\color{black}{)}$}] (11) at (1, 1) {};
		\node [style=empty, label={above:$\color{black}{(}\color{red}{2}\color{black}{,}\color{blue}{2}\color{black}{)}$}] (16) at (1, 1.5) {};
		\node [style={black_dot}] (3) at (0.5, 0) {};
		\node [style={black_dot}] (4) at (3, 1) {};
		\node [style=empty] (12) at (3, 1) {};
		\node [style=empty, label={above:$\color{black}{(}\color{red}{2}\color{black}{,}\color{blue}{4}\color{black}{)}$}] (13) at (2.5, 1) {};
		\node [style=empty, label={above:$\color{black}{(}\color{red}{1}\color{black}{,}\color{blue}{5}\color{black}{)}$}] (14) at (2.5, 1.5) {};
		\node [style=empty, label={above:$\color{black}{(}\color{red}{0}\color{black}{,}\color{blue}{6}\color{black}{)}$}] (15) at (3.5, 2) {};
		\node [style=empty, label={above:$\color{black}{(}\color{red}{2}\color{black}{,}\color{blue}{2}\color{black}{)}$}] (19) at (3.5, 1.5) {};
		\node [style=empty, label={above:$\color{black}{(}\color{red}{0}\color{black}{,}\color{blue}{4}\color{black}{)}$}] (20) at (2.5, 2) {};
		\node [style={black_dot}] (5) at (3, 0) {};
		\node [style=empty, label={above:$\color{black}{(}\color{red}{6}\color{black}{,}\color{blue}{0}\color{black}{)}$}] (21) at (-1.5, 1) {};
		\node [style=empty, label={above:$\color{black}{(}\color{red}{2}\color{black}{,}\color{blue}{4}\color{black}{)}$}] (22) at (0, 1) {};
		\node [style=empty, label={above:$\color{black}{(}\color{red}{3}\color{black}{,}\color{blue}{3}\color{black}{)}$}] (23) at (3.5, 1) {};
	\end{pgfonlayer}
	\begin{pgfonlayer}{edgelayer}
		\draw [style={red_edge}, bend right=15] (0) to (1);
		\draw [style={red_edge}, bend right=15] (2) to (3);
		\draw [style={blue_edge}, bend right=15] (4) to (5);
		\draw [style={blue_edge}, bend right=15] (3) to (2);
		\draw [style={blue_edge}, bend left=15] (4) to (5);
		\draw [style={red_edge}, bend right=15] (1) to (0);
	\end{pgfonlayer}
\end{tikzpicture}
        \caption{Possibilities of colorings of multiedges $x_0x_1$, $y_0y_1$ and $v_2z$.}
        \label{fig:15_colorings}
    \end{figure}

    Thus, $G$ does not contain adjacent vertices of degree two. Hence, $G$ is a graph obtained from a cubic graph by subdividing each its edge exactly once.
    This, however, means that $G$ is bipartite without pendant edges, and from Theorem \ref{cycle} multigraph $^2G$ has locally irregular coloring using two colors with the desired property.
\end{proof}

\begin{tw}\label{subcubic_long_paths}
Let $G$ be a graph obtained from a cubic graph by replacing some of its edges with paths of length at least five. Then $\operatorname{lir}(^2G)\leq 2$. 
\end{tw}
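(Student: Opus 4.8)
The plan is to run the argument of Section~\ref{regular_section} on the underlying cubic graph and then to spend the slack provided by the long subdivided edges on the interiors of those paths. Write $G$ as a simple cubic graph $H$ in which the edges of a set $S\subseteq E(H)$ have been replaced by internally disjoint paths of length at least $5$; call the vertices of $H$ the \emph{hubs}, so that in $^2G$ every hub has degree $6$ and every other vertex has degree $4$ (also $H\neq K_2$, as $H$ is cubic, and we may treat the components of $G$ one at a time). When $S=\emptyset$ the construction below is literally the proof for regular graphs, so one may picture $S\neq\emptyset$.

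By Theorem~\ref{123} applied to $H$ there is a neighbour-sum-distinguishing edge colouring $\varphi\colon E(H)\to\{1,2,3\}$, and $\psi:=\varphi-1$ is again neighbour-sum-distinguishing; for a hub $v$ put $\sigma(v)=\sum_{f\ni v}\psi(f)$. I would colour $^2G$ so that every edge $f\in E(H)$ contributes exactly $\psi(f)$ red parallel edges to each of its endpoints: if $f\notin S$, colour the multiedge $f$ of $^2G$ blue--blue, red--blue or red--red according as $\psi(f)$ is $0,1,2$; if $f\in S$, with corresponding path $v_0v_1\cdots v_k$ (where $k\ge5$ and $v_0,v_k$ are hubs), colour the two end multiedges $v_0v_1$ and $v_{k-1}v_k$ in the same way. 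Then each hub $v$ has red degree $\sigma(v)$ in $^2G$, so, $\psi$ being neighbour-sum-distinguishing and all hubs having degree $6$, any two adjacent hubs get distinct red degrees and distinct blue degrees; hence no conflict between hubs can ever arise (two hubs are adjacent in $^2G$ exactly when joined by an edge of $H\setminus S$). What remains is to colour, for each $f\in S$, the $k-2\ge3$ interior multiedges $v_1v_2,\dots,v_{k-2}v_{k-1}$ so that the degree-$4$ vertices $v_1,\dots,v_{k-1}$ are in conflict neither with one another nor with the hubs $v_0,v_k$.

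For this last step, let $\rho_i\in\{0,1,2\}$ be the number of red parallel edges placed on the multiedge $v_{i-1}v_i$; then $\rho_1=\rho_k=\psi(f)$ are already fixed, as are $a_0:=\sigma(v_0)$ and $a_k:=\sigma(v_k)$. Since for $2\le i\le k-1$ the two endpoints of $v_{i-1}v_i$ both have degree $4$, the requirement there reduces to $\rho_{i-1}\neq\rho_{i+1}$; the requirement at $v_0v_1$ forbids at most two values of $\rho_2$ (and at most one when $\psi(f)\neq1$), and symmetrically for $\rho_{k-1}$ at $v_{k-1}v_k$. A short finite check shows that this system in $\rho_2,\dots,\rho_{k-1}$ is always solvable, the only potential obstruction being the combination $k=5$, $\psi(f)=1$ and $a_0=a_k=3$. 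But $v_0$ and $v_k$ are adjacent in $H$ --- via the very edge $f$ --- so $\sigma(v_0)\neq\sigma(v_k)$ by the neighbour-sum-distinguishing property of $\psi$; this combination therefore never occurs, every doubled path can be completed, and $\operatorname{lir}(^2G)\le2$.

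The step to get right, and the reason for the hypothesis ``length at least $5$'', is exactly this final case analysis: the clash between degree $6$ at the hubs and degree $4$ along a path makes the two ends of a short doubled path tight, and for subdivided edges of length $4$ or less the above construction can genuinely fail. What rescues length $5$ is the observation that the single bad boundary pattern --- a red--blue end multiedge meeting a hub of balanced split $(3,3)$ --- cannot occur at both ends of a subdivided edge simultaneously, because the two hubs in question are themselves adjacent in $H$ and so receive different $\psi$-sums; everything else (longer paths, the unsubdivided edges, a disconnected $H$) is then routine bookkeeping.
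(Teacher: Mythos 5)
Your proof is correct, but it takes a genuinely different route from the one in the paper. The paper proves the statement by induction on the number of replaced edges: it starts from a locally irregular $2$-colouring of the doubled cubic graph (Section~\ref{regular_section}) and, for each replaced edge $uv$, extends the inherited colouring along the new $2$-multipath, splitting into cases according to whether the multiedge $uv$ was monochromatic or red--blue and, in the latter case, according to the pair $(r_u,r_v)$ of red degrees, with explicit colourings for $\ell\le 8$ and ``standard'' periodic insertions for longer paths. You instead build the colouring in one shot: apply Theorem~\ref{123} to the underlying cubic graph $H$, transfer $\psi\in\{0,1,2\}$ to red-multiplicities on the hub-incident multiedges, and then solve, on each subdivided edge, the resulting constraint system $\rho_{i-1}\neq\rho_{i+1}$ with at most two forbidden boundary values at each hub. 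I checked your reduction: the hub--$v_1$ condition does forbid exactly the values $\{a_0-1,a_0-3\}\cap\{0,1,2\}$ when $\psi(f)=1$ and $\{a_0-2\}\cap\{0,1,2\}$ otherwise, the interleaved chains are solvable for every $k\ge 6$, and for $k=5$ the unique failure is $\psi(f)=1$, $a_0=a_k=3$, which the neighbour-sum-distinguishing property rules out since $v_0v_k\in E(H)$. Your argument is more uniform, avoids the figures and the $(r_u,r_v)$ case split, and isolates cleanly why length five suffices; the paper's extension argument is more local in that it upgrades an arbitrary locally irregular $2$-colouring of $^2G_1$ (so it composes edge-by-edge without reference to the global NSD colouring), whereas yours relies on all hubs having degree six, i.e.\ on the regularity of $H$. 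The one place where you should add detail before this could stand as a written proof is the ``short finite check'': spell out the two interleaved chains and the three or four lines verifying solvability for $k=5$ and $k\ge 6$, since that is where the whole weight of the hypothesis $\ell\ge 5$ sits.
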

\begin{proof}
Let $G_1$ be a simple subcubic graph with $\operatorname{lir}(^2G_1) \leq 2$.
Let $u$ and $v$ be the adjacent vertices of degree three in $G_1$.
Denote by $G_2$ the graph obtained from $G_1$ by replacing the edge $uv$ with a path $P$ of length $\ell \geq 5$; let the vertices of this path be $u,x_1,x_2 \dots, x_{\ell - 1},v$.

In this proof, the specific coloring, which we call standard, of a \linebreak 2-multipath is used multiple times. We say that the coloring of a 2-multipath $v_1v_2\dots v_{4k+3}$ of length $4k+2$ is \textit{standard} if all multiedges $v_1v_2,v_2v_3, v_5v_6,v_6v_7, \dots v_{4k+1}v_{4k+2},v_{4k+2}v_{4k+3}$ are colored red-red (or blue-blue), and all its remaining multiedges are colored blue-blue (or red-red).

Consider the locally irregular 2-coloring of $^2G_1$ and two main cases \linebreak according to the coloring of the multiedge $uv$ in $^2G_1$.

\textbf{Case 1.} The multiedge $uv$ is monochromatic in the coloring of $^2G_1$. 
Without loss of generality suppose that $uv$ is blue-blue. 
At least one of the vertices $u$ and $v$ does not have a blue degree four; assume that the blue degree of $u$ is not four.
Color the multiedges $ux_1,x_1x_2,x_{\ell-1},v$ blue-blue. 
The partial coloring is locally irregular. 
The uncolored part of $^2G_2$ is a 2-multipath $^2P'$ of length $\ell-3$.
If $\ell-3 \equiv 2 \pmod{4}$, apply the standard coloring on $^2P'$ starting and ending with red-red multiedges.
If $\ell - 3 \equiv 3 \pmod{4}$, color $x_2x_3$ red-blue, and the rest of $^2P'$ in a standard way starting and ending with red-red multiedges.
If $\ell-3 \equiv 0 \pmod{4}$, color $x_2x_3$ and $x_3x_4$ red-blue, and apply the standard coloring on the rest of $^2P'$ starting and ending with red-red multiedges.
If $\ell-3 \equiv 1 \pmod{4}$, color $x_2x_3$ and $x_3x_4$ red-blue, color $x_4x_5$ blue-blue, and the rest of $^2P'$ in a standard way starting and ending with red-red multiedges.
It is not hard to see that the resulting coloring is locally irregular in all these cases.

\textbf{Case 2.} The multiedge $uv$ is red-blue in the coloring of $^2G_1$. 
Denote by $r_u$ and $r_v$ the red degrees of $u$ and $v$ in the coloring of $^2G_1$.
Since the coloring is locally irregular and a red-blue multiedge joins $u$ and $v$ in $^2G_1$, we get that $r_u \neq r_v$.
Thus, in the following, we may assume that $1 \leq r_u < r_v$.
Color the multiedges $ux_1$ and $x_{\ell-1}v$ red-blue.
Thus, $u$ and $v$ have the same color degrees in the coloring of $^2G_2$ as they have in the coloring of $^2G_1$, i.e., no conflict between $u$ or $v$ and their neighbors outside of $P$ is created.
We distinguish subcases depending on the pair $(r_u,r_v)$ and $\ell$. 



\textbf{Subcase 2.1.} Suppose that $(r_u,r_v) \in \{(1,2),(1,4),(2,4),(2,5),(4,5)\}$. 
If $\ell \leq 8$, we use one of the colorings depicted in Figure~\ref{fig:subcase21}. If $\ell \geq 9$, we extend such a colored 2-multipath by replacing two incident red-red multiedges with a standardly colored 2-multipath of any length $4k+2$ starting and ending with red-red multiedges.
Since $r_u \neq 3$, no conflict between $u$ and its neighbor on $P$ was created. Similarly, $r_v \neq 3$, hence the blue degree of $v$ is different than three, and $v$ is not in conflict with its neighbor on $P$.
Thus, if the resulting coloring is not locally irregular, there is a conflict between two adjacent internal vertices of $P$. 
But this is not the case, which can be easily seen in Figure~\ref{fig:subcase21}.

\begin{figure}[h]
    \centering
    \begin{subfigure}{0.45\textwidth}
        \begin{center}
            \begin{tikzpicture}[scale=0.7]
	\begin{pgfonlayer}{nodelayer}
		\node [style={black_dot}, label={above:$u$}] (0) at (-1, 0) {};
		\node [style={black_dot}] (1) at (0, 0) {};
		\node [style={black_dot}] (2) at (1, 0) {};
		\node [style={black_dot}] (3) at (2, 0) {};
		\node [style={black_dot}] (4) at (3, 0) {};
		\node [style={black_dot}, label={above:$v$}] (5) at (4, 0) {};
	\end{pgfonlayer}
	\begin{pgfonlayer}{edgelayer}
		\draw [style={red_edge}, bend right=15] (0) to (1);
		\draw [style={red_edge}, bend right=15] (4) to (5);
		\draw [style={red_dashed}, bend right=15] (1) to (2);
		\draw [style={red_dashed}, bend right=15] (2) to (3);
		\draw [style={red_dashed}, bend left=15] (1) to (2);
		\draw [style={red_dashed}, bend left=15] (2) to (3);
		\draw [style={blue_edge}, bend right=15] (3) to (4);
		\draw [style={blue_edge}, bend left=15] (3) to (4);
		\draw [style={blue_edge}, bend left=15] (4) to (5);
		\draw [style={blue_edge}, bend left=15] (0) to (1);
	\end{pgfonlayer}
\end{tikzpicture}
        \end{center}
        \caption{$\ell = 5$}
    \end{subfigure}
    \begin{subfigure}{0.45\textwidth}
        \begin{center}
            \begin{tikzpicture}[scale=0.7]
	\begin{pgfonlayer}{nodelayer}
		\node [style={black_dot}, label={above:$u$}] (0) at (-0.5, 0) {};
		\node [style={black_dot}] (1) at (0.5, 0) {};
		\node [style={black_dot}] (2) at (1.5, 0) {};
		\node [style={black_dot}] (3) at (2.5, 0) {};
		\node [style={black_dot}] (4) at (3.5, 0) {};
		\node [style={black_dot}, label={above:$v$}] (5) at (5.5, 0) {};
		\node [style={black_dot}] (6) at (4.5, 0) {};
	\end{pgfonlayer}
	\begin{pgfonlayer}{edgelayer}
		\draw [style={red_edge}, bend right=15] (0) to (1);
		\draw [style={red_dashed}, bend right=15] (1) to (2);
		\draw [style={red_dashed}, bend right=15] (2) to (3);
		\draw [style={red_dashed}, bend left=15] (1) to (2);
		\draw [style={red_dashed}, bend left=15] (2) to (3);
		\draw [style={blue_edge}, bend right=15] (3) to (4);
		\draw [style={blue_edge}, bend left=15] (3) to (4);
		\draw [style={blue_edge}, bend left=15] (0) to (1);
		\draw [style={red_edge}, bend right=15] (6) to (5);
		\draw [style={blue_edge}, bend left=15] (6) to (5);
		\draw [style={blue_edge}, bend left=15] (6) to (4);
		\draw [style={blue_edge}, bend left=15] (4) to (6);
	\end{pgfonlayer}
\end{tikzpicture}
        \end{center}
        \caption{$\ell = 6$}
    \end{subfigure}
    \begin{subfigure}{0.45\textwidth}
        \begin{center}
            \begin{tikzpicture}[scale=0.7]
	\begin{pgfonlayer}{nodelayer}
		\node [style={black_dot}, label={above:$u$}] (0) at (0.5, 0) {};
		\node [style={black_dot}] (1) at (1.5, 0) {};
		\node [style={black_dot}] (2) at (2.5, 0) {};
		\node [style={black_dot}] (3) at (3.5, 0) {};
		\node [style={black_dot}] (4) at (5.5, 0) {};
		\node [style={black_dot}, label={above:$v$}] (5) at (7.5, 0) {};
		\node [style={black_dot}] (6) at (6.5, 0) {};
		\node [style={black_dot}] (7) at (4.5, 0) {};
	\end{pgfonlayer}
	\begin{pgfonlayer}{edgelayer}
		\draw [style={red_edge}, bend right=15] (0) to (1);
		\draw [style={red_dashed}, bend right=15] (1) to (2);
		\draw [style={red_dashed}, bend right=15] (2) to (3);
		\draw [style={red_dashed}, bend left=15] (1) to (2);
		\draw [style={red_dashed}, bend left=15] (2) to (3);
		\draw [style={blue_edge}, bend left=15] (0) to (1);
		\draw [style={red_edge}, bend right=15] (6) to (5);
		\draw [style={blue_edge}, bend left=15] (6) to (5);
		\draw [style={blue_edge}, bend left=15] (6) to (4);
		\draw [style={blue_edge}, bend left=15] (4) to (6);
		\draw [style={red_edge}, bend right=15] (3) to (7);
		\draw [style={blue_edge}, bend left=15] (3) to (7);
		\draw [style={blue_edge}, bend right=15] (7) to (4);
		\draw [style={blue_edge}, bend left=15] (7) to (4);
	\end{pgfonlayer}
\end{tikzpicture}
        \end{center}
        \caption{$\ell = 7$}
    \end{subfigure}
    \begin{subfigure}{0.45\textwidth}
        \begin{center}
            \begin{tikzpicture}[scale=0.7]
	\begin{pgfonlayer}{nodelayer}
		\node [style={black_dot}, label={above:$u$}] (0) at (0.5, 0) {};
		\node [style={black_dot}] (1) at (1.5, 0) {};
		\node [style={black_dot}] (2) at (2.5, 0) {};
		\node [style={black_dot}] (3) at (3.5, 0) {};
		\node [style={black_dot}] (4) at (6.5, 0) {};
		\node [style={black_dot}, label={above:$v$}] (5) at (8.5, 0) {};
		\node [style={black_dot}] (6) at (7.5, 0) {};
		\node [style={black_dot}] (7) at (4.5, 0) {};
		\node [style={black_dot}] (8) at (5.5, 0) {};
	\end{pgfonlayer}
	\begin{pgfonlayer}{edgelayer}
		\draw [style={red_edge}, bend right=15] (0) to (1);
		\draw [style={red_dashed}, bend right=15] (1) to (2);
		\draw [style={red_dashed}, bend right=15] (2) to (3);
		\draw [style={red_dashed}, bend left=15] (1) to (2);
		\draw [style={red_dashed}, bend left=15] (2) to (3);
		\draw [style={blue_edge}, bend left=15] (0) to (1);
		\draw [style={red_edge}, bend right=15] (6) to (5);
		\draw [style={blue_edge}, bend left=15] (6) to (5);
		\draw [style={blue_edge}, bend left=15] (6) to (4);
		\draw [style={blue_edge}, bend left=15] (4) to (6);
		\draw [style={red_edge}, bend right=15] (3) to (7);
		\draw [style={blue_edge}, bend left=15] (3) to (7);
		\draw [style={blue_edge}, bend right=15] (8) to (4);
		\draw [style={blue_edge}, bend left=15] (8) to (4);
		\draw [style={blue_edge}, bend left=15] (7) to (8);
		\draw [style={red_edge}, bend right=15] (7) to (8);
	\end{pgfonlayer}
\end{tikzpicture}
        \end{center}
        \caption{$\ell = 8$}
    \end{subfigure}
    \caption{Coloring of the remaining multiedges in Subcase 2.1 from the proof of Theorem~\ref{subcubic_long_paths}. If $\ell \geq 9$, two adjacent red-red multiedges (dotted) are replaced by a standardly colored 2-multipath of length $4k+2$ starting and ending with red-red multiedges.}
    \label{fig:subcase21}
\end{figure}
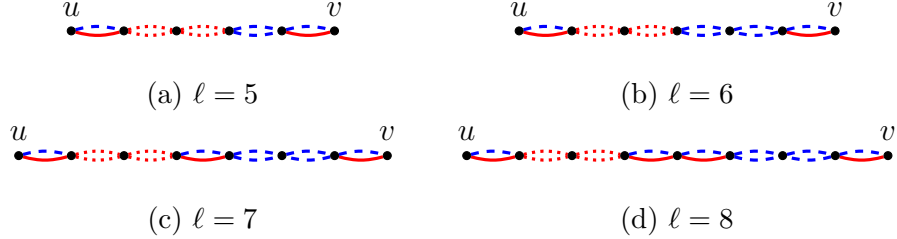

\textbf{Subcase 2.2.} Suppose that $(r_u,r_v) \in \{(3,4),(3,5)\}$.
Similarly to Subcase 2.1, we start with coloring the remaining multiedges in the case when $\ell \leq 8$ (see Figure~\ref{fig:subcase22}, and then we extend this colored 2-multipath if $\ell \geq 9$. 
In this case, we replace two adjacent blue-blue multiedges with a standardly colored 2-multipath of length $4k+2$; hence, we start and end with two blue-blue multiedges.
There is no conflict between $u$ or $v$ and their respective neighbors on $P$, as $r_u = 3$ and the blue degree of $v$ is at most two (since $r_v \geq 4$). 
Since there is no conflict between two adjacent vertices on $P$ (see Figure~\ref{fig:subcase22}), the resulting coloring is locally irregular.
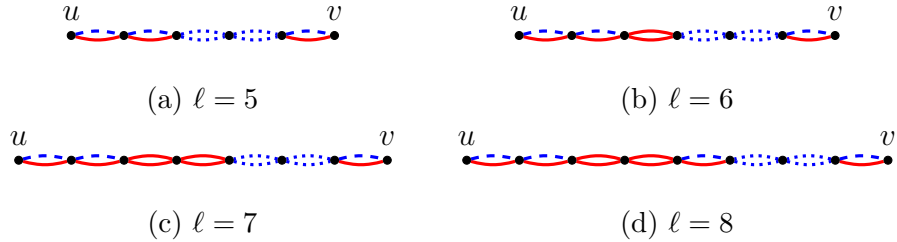
\begin{figure}[h]
    \centering
    \begin{subfigure}{0.45\textwidth}
        \begin{center}
            \begin{tikzpicture}[scale=0.7]
	\begin{pgfonlayer}{nodelayer}
		\node [style={black_dot}, label={above:$u$}] (0) at (-1, 0) {};
		\node [style={black_dot}] (1) at (0, 0) {};
		\node [style={black_dot}] (2) at (1, 0) {};
		\node [style={black_dot}] (3) at (2, 0) {};
		\node [style={black_dot}] (4) at (3, 0) {};
		\node [style={black_dot}, label={above:$v$}] (5) at (4, 0) {};
	\end{pgfonlayer}
	\begin{pgfonlayer}{edgelayer}
		\draw [style={red_edge}, bend right=15] (0) to (1);
		\draw [style={red_edge}, bend right=15] (4) to (5);
		\draw [style={blue_edge}, bend left=15] (4) to (5);
		\draw [style={blue_edge}, bend left=15] (0) to (1);
		\draw [style={blue_edge}, bend left=15] (1) to (2);
		\draw [style={blue_dashed}, bend left=15] (2) to (3);
		\draw [style={blue_dashed}, bend right=15] (2) to (3);
		\draw [style={blue_dashed}, bend left=15] (3) to (4);
		\draw [style={blue_dashed}, bend right=15] (3) to (4);
		\draw [style={red_edge}, bend left=15] (2) to (1);
	\end{pgfonlayer}
\end{tikzpicture}
        \end{center}
        \caption{$\ell = 5$}
    \end{subfigure}
    \begin{subfigure}{0.45\textwidth}
        \begin{center}
            \begin{tikzpicture}[scale=0.7]
	\begin{pgfonlayer}{nodelayer}
		\node [style={black_dot}, label={above:$u$}] (0) at (-0.5, 0) {};
		\node [style={black_dot}] (1) at (0.5, 0) {};
		\node [style={black_dot}] (2) at (1.5, 0) {};
		\node [style={black_dot}] (3) at (2.5, 0) {};
		\node [style={black_dot}] (4) at (3.5, 0) {};
		\node [style={black_dot}, label={above:$v$}] (5) at (5.5, 0) {};
		\node [style={black_dot}] (6) at (4.5, 0) {};
	\end{pgfonlayer}
	\begin{pgfonlayer}{edgelayer}
		\draw [style={red_edge}, bend right=15] (0) to (1);
		\draw [style={blue_edge}, bend left=15] (0) to (1);
		\draw [style={red_edge}, bend right=15] (6) to (5);
		\draw [style={blue_edge}, bend left=15] (6) to (5);
		\draw [style={blue_edge}, bend left=15] (1) to (2);
		\draw [style={red_edge}, bend right=15] (1) to (2);
		\draw [style={red_edge}, bend right=15] (2) to (3);
		\draw [style={red_edge}, bend left=15] (2) to (3);
		\draw [style={blue_dashed}, bend right=15] (3) to (4);
		\draw [style={blue_dashed}, bend right=15] (4) to (6);
		\draw [style={blue_dashed}, bend left=345] (6) to (4);
		\draw [style={blue_dashed}, bend right=15] (4) to (3);
	\end{pgfonlayer}
\end{tikzpicture}
        \end{center}
        \caption{$\ell = 6$}
    \end{subfigure}
    \begin{subfigure}{0.45\textwidth}
        \begin{center}
            \begin{tikzpicture}[scale=0.7]
	\begin{pgfonlayer}{nodelayer}
		\node [style={black_dot}, label={above:$u$}] (0) at (0.5, 0) {};
		\node [style={black_dot}] (1) at (1.5, 0) {};
		\node [style={black_dot}] (2) at (2.5, 0) {};
		\node [style={black_dot}] (3) at (3.5, 0) {};
		\node [style={black_dot}] (4) at (5.5, 0) {};
		\node [style={black_dot}, label={above:$v$}] (5) at (7.5, 0) {};
		\node [style={black_dot}] (6) at (6.5, 0) {};
		\node [style={black_dot}] (7) at (4.5, 0) {};
	\end{pgfonlayer}
	\begin{pgfonlayer}{edgelayer}
		\draw [style={red_edge}, bend right=15] (0) to (1);
		\draw [style={blue_edge}, bend left=15] (0) to (1);
		\draw [style={red_edge}, bend right=15] (6) to (5);
		\draw [style={blue_edge}, bend left=15] (6) to (5);
		\draw [style={blue_dashed}, bend right=15] (7) to (4);
		\draw [style={blue_dashed}, bend right=15] (4) to (6);
		\draw [style={blue_dashed}, in=15, out=525] (6) to (4);
		\draw [style={blue_dashed}, bend left=345] (4) to (7);
		\draw [style={blue_edge}, bend left=15] (1) to (2);
		\draw [style={red_edge}, bend right=15] (1) to (2);
		\draw [style={red_edge}, bend right=15] (2) to (3);
		\draw [style={red_edge}, bend right=15] (3) to (7);
		\draw [style={red_edge}, bend left=15] (3) to (7);
		\draw [style={red_edge}, bend left=15] (2) to (3);
	\end{pgfonlayer}
\end{tikzpicture}
        \end{center}
        \caption{$\ell = 7$}
    \end{subfigure}
    \begin{subfigure}{0.45\textwidth}
        \begin{center}
            \begin{tikzpicture}[scale=0.7]
	\begin{pgfonlayer}{nodelayer}
		\node [style={black_dot}, label={above:$u$}] (0) at (0.5, 0) {};
		\node [style={black_dot}] (1) at (1.5, 0) {};
		\node [style={black_dot}] (2) at (2.5, 0) {};
		\node [style={black_dot}] (3) at (3.5, 0) {};
		\node [style={black_dot}] (4) at (6.5, 0) {};
		\node [style={black_dot}, label={above:$v$}] (5) at (8.5, 0) {};
		\node [style={black_dot}] (6) at (7.5, 0) {};
		\node [style={black_dot}] (7) at (4.5, 0) {};
		\node [style={black_dot}] (8) at (5.5, 0) {};
	\end{pgfonlayer}
	\begin{pgfonlayer}{edgelayer}
		\draw [style={red_edge}, bend right=15] (0) to (1);
		\draw [style={blue_edge}, bend left=15] (0) to (1);
		\draw [style={red_edge}, bend right=15] (6) to (5);
		\draw [style={blue_edge}, bend left=15] (6) to (5);
		\draw [style={blue_edge}, bend left=15] (7) to (8);
		\draw [style={red_edge}, bend right=15] (7) to (8);
		\draw [style={blue_dashed}, bend right=15] (8) to (4);
		\draw [style={blue_dashed}, bend right=15] (4) to (6);
		\draw [style={blue_dashed}, bend left=345] (6) to (4);
		\draw [style={blue_dashed}, bend right=15] (4) to (8);
		\draw [style={red_edge}, bend right=15] (1) to (2);
		\draw [style={red_edge}, bend left=15] (2) to (3);
		\draw [style={red_edge}, bend left=15] (3) to (7);
		\draw [style={red_edge}, bend left=15] (7) to (3);
		\draw [style={red_edge}, bend left=15] (3) to (2);
		\draw [style={blue_edge}, bend left=15] (1) to (2);
	\end{pgfonlayer}
\end{tikzpicture}
        \end{center}
        \caption{$\ell = 8$}
    \end{subfigure}
    \caption{Coloring of the remaining multiedges in Subcase 2.2 from the proof of Theorem~\ref{subcubic_long_paths}. If $\ell \geq 9$, two adjacent blue-blue multiedges (dotted) are replaced by a standardly colored 2-multipath of length $4k+2$ starting and ending with two adjacent blue-blue multiedges.}
    \label{fig:subcase22}
\end{figure}

\textbf{Subcase 2.3.} Suppose that $(r_u,r_v) \in \{(1,3),(2,3)\}$.
Color the multiedges of $^2P$ as is depicted in Figure~\ref{fig:subcase23}.
There is no conflict between $u$ and its neighbor on $P$, since the red degree of $u$ is at most two, and the blue degree of $u$ is at least four.
Similarly, there is no conflict between $v$ and its neighbor on $P$, since the red and blue degrees of $v$ are three and three.
Moreover, there is no conflict between two adjacent internal vertices of $P$, hence, the coloring is locally irregular.

\begin{figure}[h]
    \centering
    \begin{subfigure}{0.45\textwidth}
        \begin{center}
            \begin{tikzpicture}[scale=0.7]
	\begin{pgfonlayer}{nodelayer}
		\node [style={black_dot}, label={above:$u$}] (0) at (-1, 0) {};
		\node [style={black_dot}] (1) at (0, 0) {};
		\node [style={black_dot}] (2) at (1, 0) {};
		\node [style={black_dot}] (3) at (2, 0) {};
		\node [style={black_dot}] (4) at (3, 0) {};
		\node [style={black_dot}, label={above:$v$}] (5) at (4, 0) {};
	\end{pgfonlayer}
	\begin{pgfonlayer}{edgelayer}
		\draw [style={red_edge}, bend right=15] (0) to (1);
		\draw [style={red_edge}, bend right=15] (4) to (5);
		\draw [style={blue_edge}, bend left=15] (4) to (5);
		\draw [style={blue_edge}, bend left=15] (0) to (1);
		\draw [style={red_dashed}, bend right=15] (1) to (2);
		\draw [style={red_dashed}, bend left=15] (1) to (2);
		\draw [style={red_dashed}, bend right=15] (2) to (3);
		\draw [style={red_dashed}, bend left=15] (2) to (3);
		\draw [style={red_edge}, bend right=15] (3) to (4);
		\draw [style={blue_edge}, bend left=15] (3) to (4);
	\end{pgfonlayer}
\end{tikzpicture}
        \end{center}
        \caption{$\ell = 5$}
    \end{subfigure}
    \begin{subfigure}{0.45\textwidth}
        \begin{center}
            \begin{tikzpicture}[scale=0.7]
	\begin{pgfonlayer}{nodelayer}
		\node [style={black_dot}, label={above:$u$}] (0) at (-0.5, 0) {};
		\node [style={black_dot}] (1) at (0.5, 0) {};
		\node [style={black_dot}] (2) at (1.5, 0) {};
		\node [style={black_dot}] (3) at (2.5, 0) {};
		\node [style={black_dot}] (4) at (3.5, 0) {};
		\node [style={black_dot}, label={above:$v$}] (5) at (5.5, 0) {};
		\node [style={black_dot}] (6) at (4.5, 0) {};
	\end{pgfonlayer}
	\begin{pgfonlayer}{edgelayer}
		\draw [style={red_edge}, bend right=15] (0) to (1);
		\draw [style={blue_edge}, bend left=15] (0) to (1);
		\draw [style={red_edge}, bend right=15] (6) to (5);
		\draw [style={blue_edge}, bend left=15] (6) to (5);
		\draw [style={red_edge}, bend right=15] (4) to (6);
		\draw [style={blue_edge}, bend right=15] (3) to (4);
		\draw [style={blue_edge}, in=165, out=15] (3) to (4);
		\draw [style={blue_edge}, bend left=15] (4) to (6);
		\draw [style={red_dashed}, bend right=15] (1) to (2);
		\draw [style={red_dashed}, bend right=15] (2) to (3);
		\draw [style={red_dashed}, bend left=345] (3) to (2);
		\draw [style={red_dashed}, in=375, out=165] (2) to (1);
	\end{pgfonlayer}
\end{tikzpicture}
        \end{center}
        \caption{$\ell = 6$}
    \end{subfigure}
    \begin{subfigure}{0.45\textwidth}
        \begin{center}
            \begin{tikzpicture}[scale=0.7]
	\begin{pgfonlayer}{nodelayer}
		\node [style={black_dot}, label={above:$u$}] (0) at (0.5, 0) {};
		\node [style={black_dot}] (1) at (1.5, 0) {};
		\node [style={black_dot}] (2) at (2.5, 0) {};
		\node [style={black_dot}] (3) at (3.5, 0) {};
		\node [style={black_dot}] (4) at (5.5, 0) {};
		\node [style={black_dot, label={above:$v$}}] (5) at (7.5, 0) {};
		\node [style={black_dot}] (6) at (6.5, 0) {};
		\node [style={black_dot}] (7) at (4.5, 0) {};
	\end{pgfonlayer}
	\begin{pgfonlayer}{edgelayer}
		\draw [style={red_edge}, bend right=15] (0) to (1);
		\draw [style={blue_edge}, bend left=15] (0) to (1);
		\draw [style={red_edge}, bend right=15] (6) to (5);
		\draw [style={blue_edge}, bend left=15] (6) to (5);
		\draw [style={red_dashed}, bend left=15] (1) to (2);
		\draw [style={red_dashed}, bend left=15] (2) to (3);
		\draw [style={red_dashed}, bend left=15] (3) to (2);
		\draw [style={red_dashed}, bend left=15] (2) to (1);
		\draw [style={blue_edge}, bend left=15] (3) to (7);
		\draw [style={blue_edge}, bend left=15] (7) to (4);
		\draw [style={blue_edge}, bend left=15] (4) to (7);
		\draw [style={blue_edge}, bend right=345] (7) to (3);
		\draw [style={blue_edge}, bend left=15] (4) to (6);
		\draw [style={red_edge}, bend left=15] (6) to (4);
	\end{pgfonlayer}
\end{tikzpicture}
        \end{center}
        \caption{$\ell = 7$}
    \end{subfigure}
    \begin{subfigure}{0.45\textwidth}
        \begin{center}
            \begin{tikzpicture}[scale=0.7]
	\begin{pgfonlayer}{nodelayer}
		\node [style={black_dot}, label={above:$u$}] (0) at (0.5, 0) {};
		\node [style={black_dot}] (1) at (1.5, 0) {};
		\node [style={black_dot}] (2) at (2.5, 0) {};
		\node [style={black_dot}] (3) at (3.5, 0) {};
		\node [style={black_dot}] (4) at (6.5, 0) {};
		\node [style={black_dot}, label={above:$v$}] (5) at (8.5, 0) {};
		\node [style={black_dot}] (6) at (7.5, 0) {};
		\node [style={black_dot}] (7) at (4.5, 0) {};
		\node [style={black_dot}] (8) at (5.5, 0) {};
	\end{pgfonlayer}
	\begin{pgfonlayer}{edgelayer}
		\draw [style={red_edge}, bend right=15] (0) to (1);
		\draw [style={blue_edge}, bend left=15] (0) to (1);
		\draw [style={red_edge}, bend right=15] (6) to (5);
		\draw [style={blue_edge}, bend left=15] (6) to (5);
		\draw [style={red_dashed}, bend right=15] (1) to (2);
		\draw [style={red_dashed}, bend right=15] (2) to (3);
		\draw [style={red_dashed}, bend right=15] (3) to (2);
		\draw [style={red_dashed}, bend right=15] (2) to (1);
		\draw [style={blue_edge}, bend right=15] (3) to (7);
		\draw [style={blue_edge}, bend right=15] (7) to (8);
		\draw [style={blue_edge}, bend right=15] (8) to (7);
		\draw [style={blue_edge}, bend right=15] (7) to (3);
		\draw [style={red_edge}, bend right=15] (8) to (4);
		\draw [style={red_edge}, bend right=15] (4) to (8);
		\draw [style={red_edge}, bend right=15] (4) to (6);
		\draw [style={blue_edge}, bend left=345] (6) to (4);
	\end{pgfonlayer}
\end{tikzpicture}
        \end{center}
        \caption{$\ell = 8$}
    \end{subfigure}
    \caption{Coloring of the remaining multiedges in Subcase 2.3 from the proof of Theorem~\ref{subcubic_long_paths}. If $\ell \geq 9$, two adjacent red-red multiedges (dotted) are replaced by a standardly colored 2-multipath of length $4k+2$ starting and ending with red-red multiedges.}
    \label{fig:subcase23}
\end{figure}
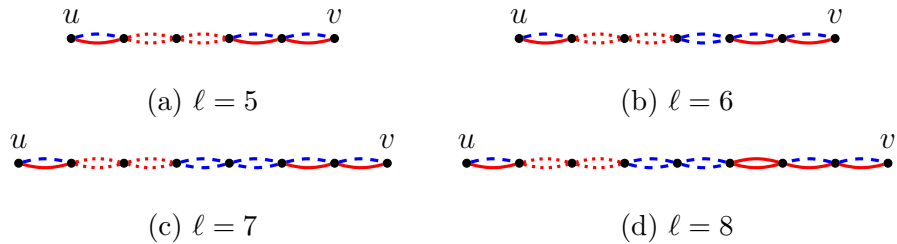

Hence, in all cases, we showed how to extend the locally irregular \linebreak 2-coloring of $^2G_1$ to a locally irregular coloring of $^2G_2$; which completes the proof.
\end{proof}

\section*{Acknowledgments}
We want to sincerely thank Jakub Przybyło for all the effort he put into helping us prove an important result in this paper.

\end{document}